\DeclareFontShape{OT1}{cmr}{bx}{sc}{<-> cmbcsc10}{}
\newcommand{\h}{\hat  }
\newcommand{\cuad}{{\sqcap\kern-.68em\sqcup}}
\newcommand{\be}{\begin{equation}}
	\newcommand{\ee}{\end{equation}}
\newtheorem{lemma}{Lemma}[section]
\newtheorem{prop}{Proposition}[section]
\newtheorem{theorem}{Theorem}[section]
\newtheorem{remark}{Remark}[section]
\newcommand{\bremark}{\begin{remark} \em}
	\newcommand{\eremark}{\end{remark} }
\numberwithin{equation}{section}
\begin{document}
	\title[Lotka-Volterra competition model]{Bifurcation for the Lotka-Volterra competition model}
	
	\author[Z. Li]{Zaizheng Li}
	\address{\noindent
		School of Mathematical Sciences, Hebei Normal University, Shijiazhuang 050024, Hebei, P.R. China}
	\email{zaizhengli@hebtu.edu.cn}

	\author[S. Terracini]{Susanna Terracini}
	\address{\noindent
	Dipartimento di Matematica “Giuseppe Peano”, Universit\`a di Torino, Via Carlo Alberto, 10, 10123 Torino, Italy}
	\email{susanna.terracini@unito.it}

	\keywords{Lotka-Volterra competition model, Bifurcation, Instability}
	
	\dedicatory{Dedicated to Nina Uraltseva with great admiration }
	\begin{abstract}
		We analyze the bifurcation phenomenon for the  following two-component competition system:
 	\begin{equation*}
 		\begin{cases}
 			-\Delta u_1=\mu u_1(1-u_1)-\beta \alpha u_1u_2,& \text{in}\ B_1\subset \mathbb{R}^N,\\
 			-\Delta u_2=\sigma u_2(1-u_2)-\beta \gamma u_1u_2,& \text{in}\ B_1\subset \mathbb{R}^N,\\
 			\frac{\partial u_1}{\partial n}=	\frac{\partial u_2}{\partial n}	=0,&\text{on}\ \partial B_1,
 		\end{cases}
 	\end{equation*}
 	where $N\ge 2$, $\alpha>\gamma>0$, $\sigma\ge\mu>0$ and $\beta>\frac{\sigma}{\gamma}$.  More precisely, treating $\beta$ as the bifurcation parameter, we initially perform a local bifurcation analysis around the positive constant solutions, obtaining precise information of where bifurcation could occur, and determine the direction of bifurcation. As a byproduct, the instability of the constant solution is provided. Furthermore, we extend our exploration to the global bifurcation analysis. 
	 Lastly, under the condition $\sigma=\mu$, we demonstrate the limiting configuration on each bifurcation branch as the competition rate $\beta\rightarrow+\infty$.
		\\
		\medskip
		
			\noindent\footnotesize	{ \textbf{AMS Subject Classification (2020):} 35B32, 92D25.}
	\end{abstract}

	\maketitle
	
	%\tableofcontents

\section{Introduction and main results}
 The paper intends to enhance our comprehension of spatial patterns in diffusive Gause-Lotka-Volterra reaction-diffusion systems: 
 	\begin{equation*}%\label{systemLK}
 	\partial_tu_i- d_i\Delta u_i=f_i(u_i)-\sum_{j\neq i}a_{ij} u_iu_j\;,
 \end{equation*}	
where $u_i(x,t)$ denotes the the non-negative population density of the $i$-th species at time $t$ within the spatial domain $\Omega$, $d_i\nabla u_i$ represents the dispersal-induced flow, $f_i(u_i)$ captures the internal growth dynamics of the $i$-th species, and $A=(a_{ij})_{ij}$ describes interspecific interactions, with $a_{ij}$ signifying the interaction rate between species $i$ and $j$. The negative sign denotes competitive interactions. In ecology, these models are instrumental in elucidating the spatial distribution of species and the emergence of ecological patterns (see, e.g., \cite{CanCosBook}). For systems involving three or more species, a wealth of literature exists, such as \cite{CTV-2005,  Dancer-Du-1995, Dancer-Du-1995-2, Dancer-2012, susanna-2024} and the references therein.

 We direct our attention to the scenario of a competing two-species model within a homogeneous environment.  This setting entails the presence of constant coefficients $d_i, a_{ij}$,  and no flux at the boundary (Neumann boundary conditions).   More precisely, 
 \begin{equation}\label{2-sys}
 	\begin{cases}
 	\partial_t u_1-d_1\Delta u_1=f_1(u_1)-a_{12}u_1u_2,
 	\\
 		\partial_t u_2-d_2\Delta u_2=f_2(u_2)-a_{21}u_1u_2,
 	\end{cases}
 \end{equation}
 where $a_{12}, a_{21}>0$.
 An important inquiry arises: \textit{Can competing species coexist in spatially homogeneous environments by adopting non-constant distributions, or is eventual extinction inevitable for one or more of them as $t\rightarrow\infty$? }Specifically, given that the system allows for constant solutions over space and time, the question is whether these solutions are stable and if there exist alternative solutions that avoid the eventual extinction of some populations. This inquiry resonates with Gause's law and Grinnell's competitive exclusion principle in ecology.
 
 Several studies have investigated the competitive exclusion principle and the dynamics of the time-dependent system \eqref{2-sys}, with relevant literature summarized as follows.
 Brown \cite{Brown-1980} established the global asymptotic stability of the constant solution under small competition rates and Neumann boundary conditions.  Pao \cite{Pao-1981} explored coexistence and stability issues under general boundary conditions using comparison theorems. Kishimoto \cite{Kishimoto-1981} demonstrated the instability of any non-constant equilibrium solution for the rectangular parallelepiped domain under Neumann boundary conditions and for the entire space under periodic boundary conditions. After that,  Kishimoto-Weinberger \cite{Kishimoto-1985} extended this result to convex domains. Matano-Mimura \cite{Matano-Mimura-1983} illustrated the existence of a stable spatially-inhomogeneous equilibrium solution for a non-convex domain under Neumann boundary conditions.
Kahane \cite{Kahane-1992} investigated the asymptotic behaviour of solutions as $t\rightarrow\infty$ when $a_{12}=a_{21}$ under Neumann boundary conditions. Dancer-Hilhorst-Mimura-Peletier \cite{Dancer-1999} derived the spatial segregation limit as the competition rate tends to infinity and proved that the limiting system becomes a free boundary problem under Neumann boundary conditions. Later, Crooks et al. \cite{Dancer-2004} obtained similar results under inhomogeneous Dirichlet boundary conditions.

 Regarding the stationary problem, several studies have contributed valuable insights.  Leung \cite{Leung-1980} addressed Dirichlet boundary data and established asymptotic stability. Dancer \cite{Dancer-1984, Dancer-1985, Dancer-1991} employed degree theory in cones to investigate sufficient and necessary conditions for the existence of positive solutions under Dirichlet boundary conditions.  Cosner-Lazer \cite{Cosner-Lazer-1984} derived sufficient conditions for coexistence, uniqueness and stability under Dirichlet boundary conditions. Gui-Lou \cite{Gui-Lou-1994} studied the uniqueness and multiplicity of coexistence states under Dirichlet and Neumann boundary conditions. Gui \cite{Gui-1995} investigated the multiplicity and stability of coexistence states in the case $a_{12}=a_{21}$, under Dirichlet boundary conditions. Conti-Terracini-Verzini \cite{CTV-2005} described spatial segregation limits and provided precise estimates for the rate convergence as the competition rate tends to infinity under inhomogeneous Dirichlet boundary conditions.
 
In the context of bifurcation theory for stationary systems, most literature focuses on the scenario where $f_i(s)=a_i s-b_i s^2$, $i=1, 2$. Blat-Brown \cite{Blat-Brown-1984}  treated $a_2$ as a bifurcation parameter while keeping $a_1$ fixed, exploring the existence of solutions under both Dirichlet and Neumann boundary conditions. Subsequently, Cantrell-Cosner \cite{Cantrell-Cosner-1987} obtained a more detailed analysis by considering the system as a two-parameter ($a_1, a_2$)  bifurcation problem under Dirichlet boundary conditions. For the system with $a_{1}=a_{2}$, $a_{12}\neq a_{21}$,  Gui-Lou \cite{Gui-Lou-1994} studied the global bifurcation phenomenon under Dirichlet and Neumann boundary conditions.  In cases where $a_{1}=a_{2}, a_{12}=a_{21}=b$, Gui \cite{Gui-1995} explored the multiple coexistence states and provided a clear nodal set analysis for global bifurcation branches under Dirichlet boundary conditions, utilizing $b$ as the bifurcation parameter. For more general bifurcation theory, we refer to \cite{CR-1971, CR-1973, CR-1977, R-1971}. 

In our current study, we focus specifically on the case $a_{1}\le a_{2}$, $a_{12}=\beta\alpha>\beta\gamma=a_{21}$, with $\beta$ serving as the bifurcation parameter.  Initially, we conduct a local bifurcation analysis around the positive constant solutions. This involves obtaining accurate estimates of potential bifurcation locations and determining the direction in which bifurcation occurs. Additionally, we provide the instability of the constant solution.
Furthermore, we delve into the exploration of global bifurcation branches. %Through rigorous analysis, we establish that each branch is unbounded. 
Finally, in the case $\sigma=\mu$, we describe the asymptotic behaviour of each bifurcation branch as the competition rate tends to infinity.

To present our main results clearly, we will introduce some notations that will be consistently used throughout the paper.
\begin{itemize}
	\item The ambient space is the radial function space $C_{r}^{2,\kappa}(\overline{B_1},\mathbb{R}^{2})$ for some $\kappa\in(0,1)$. More precisely,
	\begin{equation*}
C_{r}^{2,\kappa}(\overline{B_1},\mathbb{R}^{2}):=	\{u=(u_{1}, u_{2}): u_{i}\in C^{2,\kappa}(\overline{B_1}), u_{i}(x)=u_{i}(r), r=|x|, i=1,2\}.
	\end{equation*}
	\item $-\Delta_r$ denotes the Laplacian operator on radial function space with Neumann boundary condition.
	\item $\lambda_j$ denotes the $j$-th positive eigenvalue for $-\Delta_r$, $j=1, 2, \cdots$. 
	\item $f_j$ is the corresponding radial eigenfunction to $\lambda_j$. By \cite[Section 3.3]{Grebenkov-Nguyen-2013}, every  $\lambda_j$ is simple, and $f_j(r)$ has exactly $j$ simple zeros, where $r=|x|$.
\end{itemize}

%Our aim is to comprehensively analyze the bifurcation phenomenon surrounding the constant solution and to uncover non-constant radial solutions. Additionally, we seek to elucidate global bifurcation branches. Finally, in the case $\sigma=\mu$, we delve into investigating the limiting configuration on each branch as the competition rate goes to infinity.

The following theorem is related to the local bifurcation analysis and the instability around constant positive solutions.
 \begin{theorem}[Local bifurcation]\label{bifurcation-NBC}
 	Consider the following two-component competition system:
 	\begin{equation}\label{system2-comp}
 		\begin{cases}
 			-\Delta u_1=\mu u_1(1-u_1)-\beta \alpha u_1u_2,& \text{in}\ B_1\subset \mathbb{R}^N,\\
 			-\Delta u_2=\sigma u_2(1-u_2)-\beta \gamma u_1u_2,& \text{in}\ B_1\subset \mathbb{R}^N,\\
 			\frac{\partial u_1}{\partial n}=	\frac{\partial u_2}{\partial n}	=0,&\text{on}\ \partial B_1,
 		\end{cases}
 	\end{equation}
 	where $N\ge 2$, $\alpha>\gamma>0$, $\sigma\ge\mu>0$ and $\beta>\frac{\sigma}{\gamma}$. Then the following statements hold.
 	\begin{itemize}
 		\item[(i)] $(a_\beta,b_{\beta})=\frac{1}{\beta^2\alpha\gamma-\mu\sigma}\big((\beta\alpha-\mu)\sigma, (\beta\gamma-\sigma)\mu\big)$ is the only positive constant solution.
 		\item[(ii)] Given $\alpha, \gamma, \mu, \sigma$ such that  $\sqrt{\mu\sigma}\in (\lambda_k,\lambda_{k+1}]$,  then $\left((a_{\beta_{j}},b_{\beta_{j}}),\beta_{j}\right)$ is a bifurcation point for the $\beta_{j}$ satisfying
 		\begin{equation}\label{beta}
 			-\frac12\Big(\mu a_{\beta_{j}}+\sigma b_{\beta_{j}}-\sqrt{4\beta_{j}^{2}\alpha\gamma a_{\beta_{j}}b_{\beta_{j}}+(\mu a_{\beta_{j}}-\sigma b_{\beta_{j}})^{2}}\Big)=\lambda_j,\quad j=1, 2, \cdots, k.
 		\end{equation}
 		And there exist non-constant positive radial solutions in the solution curve. Moreover, the solution curve is of the form $\textbf{u}_{j}(s)=\frac{1}{\beta_{j}^2(s)\alpha\gamma-\mu\sigma}\big(\beta_{j}(s)\alpha\sigma-\mu\sigma, (\beta_{j}(s)\gamma\mu-\sigma\mu\big)+s\textbf{h}_{0,j}+\psi(s\textbf{h}_0,\beta_{j}(s))$, and the direction of bifurcation is 
 		\begin{equation*}
 			\textbf{h}_{0,j}=
 			f_{j}(r)\begin{pmatrix}
 				-\frac{\lambda_{j}+\sigma b_{\beta_{j}}}{\beta_{j}\gamma b_{\beta_{j}}}\\
 				1
 			\end{pmatrix}.
 		\end{equation*}
		\item[(iii)] The constant positive solution $(a_\beta,b_{\beta})$ is unstable.
 	\end{itemize}
 \end{theorem}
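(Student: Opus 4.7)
Item (i) is a direct algebraic computation. At a positive constant solution, dividing the identities $\mu u_1(1-u_1) = \beta\alpha u_1 u_2$ and $\sigma u_2(1-u_2) = \beta\gamma u_1 u_2$ by $u_1$, $u_2$ reduces to a linear $2\times 2$ system in $(u_1, u_2)$, whose unique solution is the stated $(a_\beta,b_\beta)$. Positivity of both components and of the common denominator $\beta^2\alpha\gamma - \mu\sigma$ follows from $\beta>\sigma/\gamma$ together with $\alpha>\gamma>0$ and $\sigma\ge\mu>0$ (which yield $\beta\alpha>\mu$ and $\beta^2\alpha\gamma > \sigma\beta\gamma > \sigma\mu$).

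For item (ii), I recast the problem as $F(\mathbf{u},\beta)=0$ on $C^{2,\kappa}_r(\overline{B_1},\RR^2)$, substitute $\mathbf{u}=(a_\beta,b_\beta)+\mathbf{v}$, and compute $L(\beta):=D_{\mathbf{v}}F(0,\beta)$. Using the constant-state identities $\mu(1-a_\beta)=\beta\alpha b_\beta$ and $\sigma(1-b_\beta)=\beta\gamma a_\beta$ to collapse the diagonal entries, one obtains
\[ L(\beta) \;=\; \begin{pmatrix} -\Delta + \mu a_\beta & \beta\alpha a_\beta \\ \beta\gamma b_\beta & -\Delta + \sigma b_\beta \end{pmatrix}. \]
Decomposing along the radial Neumann eigenbasis $\{f_j\}$ block-diagonalizes $L(\beta)$, the $j$-th block being the matrix $M_j(\beta)$ obtained by replacing $-\Delta$ with $\lambda_j$. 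Then $\ker L(\beta_j)\neq 0$ precisely when $\det M_j(\beta_j)=0$, i.e.
\[ \lambda_j^2 + (\mu a_{\beta_j}+\sigma b_{\beta_j})\lambda_j + a_{\beta_j}b_{\beta_j}(\mu\sigma - \beta_j^2\alpha\gamma) = 0. \]
Since the constant term is negative, the quadratic has a unique positive root; simplifying the discriminant to $(\mu a_{\beta_j}-\sigma b_{\beta_j})^2 + 4\beta_j^2\alpha\gamma a_{\beta_j}b_{\beta_j}$ yields formula \eqref{beta}. Reading off the kernel from the second row of $M_j(\beta_j)$ with the normalization $c_2=1$ gives the claimed direction $\mathbf{h}_{0,j}$.

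To produce the $\beta_j$ themselves, view \eqref{beta} as $\Lambda(\beta)=\lambda_j$, where $\Lambda(\beta)$ is the positive root above. A short computation shows $\Lambda$ is continuous on $(\sigma/\gamma,\infty)$ with $\Lambda(\beta)\to 0$ as $\beta\to(\sigma/\gamma)^+$ (because $b_\beta\to 0$) and $\Lambda(\beta)\to\sqrt{\mu\sigma}$ as $\beta\to\infty$ (because $\beta^2 a_\beta b_\beta \to \mu\sigma/(\alpha\gamma)$); strict monotonicity of $\Lambda$ then provides a unique $\beta_j$ for each $\lambda_j\in(0,\sqrt{\mu\sigma}]$. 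Applying the Crandall--Rabinowitz theorem \cite{CR-1971} requires: one-dimensionality of $\ker L(\beta_j)$ (from simplicity of $\lambda_j$ in the radial space plus the fact that $M_j(\beta_j)$ has rank $1$); Fredholm index zero (standard for this elliptic system on $C^{2,\kappa}_r$); and the transversality condition $\partial_\beta L(\beta_j)\mathbf{h}_{0,j}\notin\operatorname{Range} L(\beta_j)$. Pairing against the one-dimensional cokernel reduces transversality to $\Lambda'(\beta_j)\neq 0$, which is exactly the strict monotonicity above. The Crandall--Rabinowitz theorem then furnishes the local curve $\mathbf{u}_j(s)$ in the stated form; non-constancy is automatic because $f_j$ has $j\ge 1$ sign changes, and positivity for small $|s|$ follows by continuity from positivity of $(a_{\beta_j},b_{\beta_j})$. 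I expect the transversality/monotonicity step to be the main technical obstacle, since it requires explicit control of $a_\beta'$ and $b_\beta'$ entering $\Lambda'$.

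For item (iii), I analyze the parabolic linearization $\partial_t \mathbf{v} = \Delta \mathbf{v} + J_f\mathbf{v}$ under Neumann conditions at the equilibrium, where the reaction Jacobian is
\[ J_f \;=\; \begin{pmatrix} -\mu a_\beta & -\beta\alpha a_\beta \\ -\beta\gamma b_\beta & -\sigma b_\beta \end{pmatrix}. \]
Restricting to the constant eigenfunction of $-\Delta$ (eigenvalue $0$), the spectrum coincides with that of $J_f$. Since $\det J_f = a_\beta b_\beta(\mu\sigma - \beta^2\alpha\gamma) < 0$ under the standing hypotheses, $J_f$ admits a strictly positive eigenvalue, producing an unstable mode for the semigroup. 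Hence $(a_\beta,b_\beta)$ is linearly, and therefore nonlinearly, unstable.
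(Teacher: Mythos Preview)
Your plan is correct and follows the same overall route as the paper: for (ii) you translate to the constant state, linearize to obtain $L(\beta)=-\Delta + A(\beta)$ with $A(\beta)$ the $2\times2$ coefficient matrix, reduce to the radial Neumann eigenbasis, and invoke Crandall--Rabinowitz; for (iii) you look at the Jacobian on the zero Neumann mode. The paper does exactly this (diagonalizing $A(\beta)$ via its eigenvalues $\delta_1(\beta)<0<\delta_2(\beta)$), and your $\Lambda(\beta)$ is precisely the paper's $-\delta_1(\beta)$, whose strict monotonicity on $(\sigma/\gamma,\infty)$ is indeed the main technical step (Proposition~\ref{mono-beta} in the appendix, and not a short computation).

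The one genuine difference is how transversality is verified. The paper computes $L_\beta(\beta_j)\mathbf{h}_0$ explicitly, pairs it with the cokernel vector, and is led to a separate sign inequality (Lemma~\ref{est-m-beta}) that is proved independently. Your reduction is cleaner: since on the $f_j$-block transversality is $\mathbf{v}^T\partial_\beta M_j(\beta_j)\mathbf{h}\neq0$, and for a rank-one $2\times2$ matrix this equals (up to a nonzero factor) $\tfrac{d}{d\beta}\det M_j(\beta_j)=\delta_1'(\beta_j)\bigl(\lambda_j+\delta_2(\beta_j)\bigr)$, the strict monotonicity of $-\delta_1$ already delivers transversality, making Lemma~\ref{est-m-beta} redundant. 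This is a worthwhile economy; conversely, the paper's explicit computation yields the exact form of $L_\beta(\beta_j)\mathbf{h}_0$, which is of some independent interest. Your instability argument via $\det J_f<0$ is also slightly more direct than the paper's full diagonalization, but equivalent in content (the unstable eigenvalue you find is $-\delta_1(\beta)$ on the constant mode).
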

 \begin{remark}
 	$(a)$ In fact, $\frac{\sigma}{\gamma}<\beta_{1}<\beta_{2}<\cdots<\beta_{k-1}<\beta_{k}$ by Proposition \ref{mono-beta}. $(b)$ If $\sqrt{\mu\sigma}$ is sufficiently close to $\lambda_k$, then $\beta_k$ could be arbitrarily large. $(c)$  It holds that $\beta_j'(0)\neq 0$ for almost all $\sigma,\alpha,\gamma,\mu$ by Remark \ref{derivative-beta-neq0} and Lemma \ref{derivative}.
 \end{remark}
 
 Next, we discuss the global bifurcation phenomenon.
 \begin{theorem}[Global bifurcation]\label{global}
 	Under the assumptions in Theorem \ref{bifurcation-NBC}, then the $\left((a_{\beta_{j}},b_{\beta_{j}}),\beta_{j}\right)$ 
 		in Theorem \ref{bifurcation-NBC} $(ii)$ is actually also a global bifurcation point, $ j=1,2,\cdots, k$. %the following statements hold.
 	%\begin{itemize}
 	%		\item[(i)] 
 	%	The $\left((a_{\beta_{j}},b_{\beta_{j}}),\beta_{j}\right)$ 
 	%	in Theorem \ref{bifurcation-NBC} $(ii)$ is actually also a global bifurcation point, $ j=1,2,\cdots, k$. 
 	%	\item[(ii)] If additionally assume $\mu,\sigma \notin\sigma(-\Delta_r)$,  then. %and every global bifurcation branch is unbounded.
 	%\end{itemize}
 \end{theorem}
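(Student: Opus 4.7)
The plan is to invoke Rabinowitz's classical global bifurcation theorem \cite{R-1971}, taking Theorem \ref{bifurcation-NBC} as the local input. I would work in the radial Hölder space $X = C_r^{2,\kappa}(\overline{B_1}, \mathbb{R}^2)$. First, translate the trivial branch to zero via $\mathbf{v} = (u_1 - a_\beta, u_2 - b_\beta)$, so that \eqref{system2-comp} becomes an operator equation
\begin{equation*}
F(\mathbf{v}, \beta) := \mathbf{v} - K(\beta)\mathbf{v} - R(\mathbf{v}, \beta) = 0,
\end{equation*}
where $K(\beta) = (-\Delta + I)^{-1}(I + M(\beta))$ with $M(\beta)$ the Jacobian of the reaction at $(a_\beta, b_\beta)$, and $R(\mathbf{v}, \beta)$ packages the quadratic residual (also passed through $(-\Delta+I)^{-1}$). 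Schauder theory with Neumann data and the compact embedding $C^{2,\kappa}(\overline{B_1}) \hookrightarrow C^{0,\kappa}(\overline{B_1})$ render $K(\beta)$ compact on $X$ and give $\|R(\mathbf{v}, \beta)\|_X = o(\|\mathbf{v}\|_X)$ locally uniformly in $\beta$.

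Second, I would verify the odd-crossing hypothesis for $I - K(\beta)$ at $\beta = \beta_j$. The condition that $1$ be an eigenvalue of $K(\beta)$ on radial functions is equivalent to $\lambda_j$ being a matrix eigenvalue of $-M(\beta)$, recovering exactly relation \eqref{beta}. Since every $\lambda_j$ is a simple eigenvalue of $-\Delta_r$, the kernel of $I - K(\beta_j)$ is one-dimensional, spanned by the direction $\mathbf{h}_{0, j}$ already produced in Theorem \ref{bifurcation-NBC}. The required transversality reduces to confirming that
\begin{equation*}
\beta \mapsto -\tfrac12\Big(\mu a_\beta + \sigma b_\beta - \sqrt{4\beta^2\alpha\gamma a_\beta b_\beta + (\mu a_\beta - \sigma b_\beta)^2}\Big)
\end{equation*}
crosses $\lambda_j$ with nonzero speed at $\beta_j$; this is an explicit one-variable computation which leans on the monotonicity asserted in Proposition \ref{mono-beta}. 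Consequently the Leray-Schauder index $i(I - K(\beta), 0)$ jumps as $\beta$ crosses $\beta_j$.

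Third, Rabinowitz's theorem then yields a connected component $\mathcal{C}_j$ of the closure of $\{(\mathbf{v}, \beta) : F(\mathbf{v}, \beta) = 0, \mathbf{v} \neq 0\}$ containing $(0, \beta_j)$ and satisfying the standard dichotomy: either $\mathcal{C}_j$ is unbounded in $X \times \mathbb{R}$, or $\mathcal{C}_j$ meets the trivial branch at some other $\beta_i$, $i \neq j$. Pulling back by $(a_\beta, b_\beta)$, this exhibits $((a_{\beta_j}, b_{\beta_j}), \beta_j)$ as a global bifurcation point of \eqref{system2-comp}, as required.

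The main obstacle I anticipate is the transversality step: although the simplicity of $\lambda_j$ makes the kernel one-dimensional, one must track $\beta \mapsto M(\beta)$ carefully, since its entries depend on $\beta$ through the constants $a_\beta, b_\beta$ themselves, and verify that the relevant eigenvalue of $-M(\beta)$ genuinely sweeps through $\lambda_j$ at positive speed at $\beta_j$. The remaining ingredients — compactness, $C^1$ regularity of $F$, and the application of Rabinowitz's theorem — are standard once the functional-analytic setup is in place.
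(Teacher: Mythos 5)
Your proposal is correct and follows the same overall strategy as the paper: reformulate the translated problem as a compact fixed-point equation, show the Leray--Schauder index of the linearization jumps as $\beta$ crosses $\beta_j$, and invoke the Rabinowitz global alternative (the paper uses \cite[Theorem II.3.3]{bifurcation-2012} together with the index formula \cite[Proposition 14.5]{ziedler1993} rather than \cite{R-1971} directly, which is immaterial). The one genuine difference lies in the resolvent shift, and it affects which monotonicity lemma carries the index computation. The paper inverts componentwise, using $(-\Delta+\mu)^{-1}$ and $(-\Delta+\sigma)^{-1}$; since $\mu\neq\sigma$ in general, the eigenvalue-$\lambda$ equation for the linearized operator becomes $-\Delta \mathbf{h}=D(\beta,\lambda)\mathbf{h}$ with a matrix $D(\beta,\lambda)$ whose eigenvectors depend on $\lambda$, forcing the two-variable analysis of $\delta_2(\beta,\lambda)$ and the monotonicity statements $\partial_\lambda\delta_2<0$, $\partial_\beta\delta_2>0$ of Proposition \ref{mono-lambda} (a substantial appendix computation). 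Your uniform shift $K(\beta)=(-\Delta+I)^{-1}(I+M(\beta))$ with $M(\beta)=-A(\beta)$ avoids this: the eigenvalue equation $(I-A(\beta))\mathbf{h}=\lambda(-\Delta+I)\mathbf{h}$ diagonalizes along the $\lambda$-independent eigenvectors of the constant matrix $A(\beta)$, yielding the explicit spectrum
\begin{equation*}
\lambda=\frac{1-\delta_i(\beta)}{1+\lambda_m},\qquad i=1,2,\quad m=0,1,2,\dots,
\end{equation*}
so that the eigenvalues exceeding $1$ are counted exactly by $\#\{m:\ -\delta_1(\beta)>\lambda_m\}$ (the $\delta_2$-branch never contributes since $\delta_2(\beta)>0$, and the constant mode $\lambda_0=0$ contributes a $\beta$-independent term). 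The parity flip at $\beta_j$ then follows from the strict monotonicity of $-\delta_1(\beta)$ alone, which is precisely Proposition \ref{mono-beta}; indeed strict monotonicity suffices for the counting argument, so the "nonzero speed" you flag as the main anticipated obstacle is not actually needed, and the transversality tracking you worry about is already settled by that one-variable lemma. In short, your route buys a cleaner index-jump step that bypasses Proposition \ref{mono-lambda} entirely, at the cost of no generality; the paper's componentwise shift is what necessitates its heavier two-parameter monotonicity analysis.
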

 
 If $\sigma=\mu\notin\sigma(-\Delta_r)$, then every global bifurcation branch is unbounded by Gui-Lou \cite{Gui-Lou-1994}. 
 In fact,	on each bifurcation branch, $\beta(s)$ is unbounded and goes to infinity. 
 	%Thus for $\beta$ sufficiently large, there are at least $k+1$ positive solutions.
% \end{remark}
Our final result is concentrated on the limiting profile of each branch under strong competition. 
\begin{theorem}[Limiting configuration]\label{limit-sys} 
	In the case $\sigma=\mu\notin\sigma(-\Delta_r)$, the following statements hold. 
	\begin{itemize}
	\item[(i)]
Every global bifurcation branch is unbounded. 
\item[(ii)]
 On the $j$-th bifurcation branch, the $w_{\beta}=\gamma u_{1,\beta}-\alpha u_{2,\beta}\rightarrow w$ strongly in $C_{r}^{2,\kappa}(B_{1})$  as $\beta$ tends to infinity, where $w$ satisfies
	\begin{equation*}
		-\Delta w= f(w),
		\mbox{with}\ 
		f(s)=
		\begin{cases}
			\mu s\left(1-\frac{s}{\gamma}\right),& s\ge0,\\
			\mu s\left(1+\frac{s}{\alpha}\right),& s\le0,
		\end{cases}
	\end{equation*}
	and $w$ has $j$ simple roots in $(0,1)$.
	\end{itemize}
\end{theorem}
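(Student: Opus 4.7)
The plan is to extract a strong limit of the combination $w_\beta := \gamma u_{1,\beta} - \alpha u_{2,\beta}$ along the $j$-th branch as $\beta\to\infty$, exploiting a cancellation that is special to $\sigma = \mu$, and then transfer the nodal data from the bifurcation direction to the limit $w$. For part (i), I would combine the Gui-Lou unboundedness of each global branch with the uniform pointwise bound $0 \leq u_{i,\beta} \leq 1$, which follows from the Neumann maximum principle applied componentwise. Since Schauder estimates then provide uniform $C^{2,\kappa}_r(\overline{B_1})$ control of the $u$-component along the branch, the unboundedness must reside in the parameter, forcing $\beta(s) \to \infty$.

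For part (ii), the pivotal identity is
\begin{equation*}
-\Delta w_\beta \;=\; \mu\gamma\, u_{1,\beta}(1 - u_{1,\beta}) - \mu\alpha\, u_{2,\beta}(1 - u_{2,\beta}),
\end{equation*}
obtained by taking $\gamma$ times the first equation minus $\alpha$ times the second; the $\beta\alpha\gamma\, u_1 u_2$ terms cancel precisely because $\sigma = \mu$, leaving a right-hand side that is uniformly bounded in $\beta$. Schauder theory then yields uniform $C^{2,\kappa}(\overline{B_1})$ bounds on $w_\beta$, giving a subsequential limit $w$ in $C^{2,\kappa}_r(\overline{B_1})$ that inherits the Neumann condition. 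Integrating the first equation over $B_1$ and using the no-flux boundary condition gives
\begin{equation*}
\int_{B_1} u_{1,\beta}\, u_{2,\beta}\, dx \;=\; \frac{1}{\beta\alpha}\int_{B_1} \mu\, u_{1,\beta}(1-u_{1,\beta})\, dx \;\leq\; \frac{C}{\beta},
\end{equation*}
so $u_{1,\beta}\, u_{2,\beta} \to 0$ in $L^1$. Combined with uniform $C^{2,\kappa}_r$ bounds on each $u_{i,\beta}$ (inherited from the bound on $w_\beta$ together with the sign condition $u_{i,\beta} \geq 0$), one extracts $u_{1,\beta} \to w^+/\gamma$ and $u_{2,\beta} \to w^-/\alpha$ in $C^{2,\kappa}_r$ along a subsequence. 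Substituting back into the identity produces $-\Delta w = f(w)$ with the stated piecewise nonlinearity.

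The main obstacle is the nodal count: showing that $w$ has exactly $j$ simple zeros in $(0,1)$. Using Theorem \ref{bifurcation-NBC}(ii), a short computation gives
\begin{equation*}
\gamma\, h_{0,j}^{(1)} - \alpha\, h_{0,j}^{(2)} \;=\; -\left(\frac{\lambda_j + \sigma b_{\beta_j}}{\beta_j\, b_{\beta_j}} + \alpha\right) f_j(r),
\end{equation*}
a nonzero scalar multiple of the radial eigenfunction $f_j$, which has exactly $j$ simple zeros in $(0,1)$ by \cite{Grebenkov-Nguyen-2013}. Near $s = 0$, the perturbation $w_\beta - (\gamma a_{\beta_j} - \alpha b_{\beta_j})$ therefore has $j$ simple zeros. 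To preserve this count along the entire branch I would invoke the unilateral global bifurcation framework, which constructs the $j$-th branch inside an open cone of solutions with a fixed nodal signature, so the signed count is a discrete invariant on the connected component. The sign changes of $w_\beta$ around the reference value $\gamma a_\beta - \alpha b_\beta$ (which tends to $0$ as $\beta\to\infty$) pass to sign changes of $w$ around $0$ in the $C^{2,\kappa}$ limit, giving $j$ zeros. The simplicity of those zeros is then a consequence of Cauchy-Lipschitz uniqueness for the radial ODE associated to $-\Delta w = f(w)$ with $f$ Lipschitz: a double zero would force $w \equiv 0$, incompatible with the preserved nodal signature of the branch.
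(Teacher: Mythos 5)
Your overall skeleton for part (ii) — the cancellation identity for $w_\beta$ when $\sigma=\mu$, the $L^1$ segregation bound $\int_{B_1}u_{1,\beta}u_{2,\beta}\,dx\le C/\beta$, and the identification $u_1=w^+/\gamma$, $u_2=-w^-/\alpha$ — matches the paper. But there is a genuine gap at the heart of the argument: the propagation of the nodal count along the global branch. You invoke a ``unilateral global bifurcation framework'' that ``constructs the $j$-th branch inside an open cone of solutions with a fixed nodal signature.'' No such general theorem is available for this system: nodal invariance along branches is automatic only in scalar Sturm--Liouville settings, and for a system it must be proved. The paper does exactly this (Theorem \ref{unbdd-branch}), and the key point is \emph{which} combination carries the invariant. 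The correct quantity is $v_\beta=(\beta\gamma-\mu)u_{1,\beta}-(\beta\alpha-\mu)u_{2,\beta}$, because it satisfies the \emph{linear} equation \eqref{v-eq} with a bounded potential; hence its zeros in $(0,1)$ are simple and cannot cluster unless $v_\beta\equiv0$, so membership in the open class $S_j$ can only be lost at a locked solution. Ruling out locked solutions on the branch requires the auxiliary machinery you have not supplied: positivity along the branch (Lemma \ref{positive-branch}), which in turn needs the isolatedness of $(0,0)$, $(1,0)$, $(0,1)$ (Lemma \ref{isolated}), and the classification of locked solutions (Lemma \ref{w-eq-2} with Remark \ref{w-equation2}). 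The quantity you propose to track instead, $w_\beta-(\gamma a_\beta-\alpha b_\beta)=\gamma w_1-\alpha w_2$, satisfies no linear equation — its right-hand side contains $-\mu\gamma w_1^2+\mu\alpha w_2^2$, which does not factor through $\gamma w_1-\alpha w_2$ unless $\alpha=\gamma$ — so the simple-zero/openness argument fails for it at finite $\beta$. Note also that this nodal invariance is not a side issue: it is simultaneously the paper's proof of part (i) (the branch cannot return to another bifurcation point $\beta_m$, so the Rabinowitz alternative forces unboundedness), so deferring (i) entirely to the Gui--Lou citation while asserting the nodal signature for (ii) leaves the essential step unproven; the final passage to the limit is then the paper's observation that $\beta^{-1}v_\beta=w_\beta-\tfrac{\mu}{\beta}u_{1,\beta}+\tfrac{\mu}{\beta}u_{2,\beta}\rightarrow w$, which transfers the $j$ simple roots to $w$.

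A second, repairable error: your claim of uniform $C^{2,\kappa}_r$ bounds on the individual components $u_{i,\beta}$ (``inherited from the bound on $w_\beta$ together with the sign condition'') is false. The limits $w^+/\gamma$ and $-w^-/\alpha$ fail to be $C^2$ across the free boundary $\{w=0\}$ (the zeros of $w$ are simple, so $\Delta(w^+)$ acquires a surface-measure term there); equivalently, $\beta u_{1,\beta}u_{2,\beta}$ stays bounded only in $L^1$ and converges to an interface measure, not to $0$ uniformly, so the components' equations have no uniformly bounded right-hand sides and Schauder does not apply to them. What is true, and all that is needed, are uniform $C^{0,\kappa}$ bounds on $u_{i,\beta}$ (the paper cites \cite{CTV-2005} and \cite{NTTV-2010}); these make the right-hand side of your cancellation identity for $w_\beta$ uniformly H\"older, so Schauder gives $C^{2,\kappa}$ convergence of $w_\beta$ alone, while the identification of the limits of $u_{i,\beta}$ uses only weak $H^1$ plus a.e.\ convergence and segregation, exactly as in the paper (where strong $H^1$ convergence is obtained by a $\limsup$ comparison of Dirichlet energies, not by $C^{2,\kappa}$ compactness of the components). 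Relatedly, in part (i) the uniform $C^{2,\kappa}$ control of the $u$-component holds only on portions of the branch where $\beta$ stays bounded; your contrapositive conclusion that the unboundedness must reside in $\beta$ survives with that reading, but the statement as written is inaccurate.
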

 
 Our approach involves several key steps. In our analysis, we focus on $\beta$ as the bifurcation parameter.  Initially, we perform a detailed examination of local bifurcations. This involves deriving precise estimates of potential bifurcation locations. Additionally, we determine the direction of bifurcation, which provides insights into how system behaviour changes near constant solutions.  As a byproduct of our investigation, we establish the instability of constant solutions. We extend our analysis to investigate global bifurcation branches surrounding constant positive solutions. %A significant outcome is the demonstration that every bifurcation branch is unbounded. This result carries significant implications for understanding the long-term behaviour of the system.  
 Furthermore, in the case $\sigma=\mu$, we analyze the limiting configuration on each bifurcation branch as the competition rate $\beta\rightarrow+\infty$.

 The article is structured as follows. Section 2 is dedicated to exploring local bifurcation theory and the instability around the constant solution, with a focus on proving Theorem \ref{bifurcation-NBC}. Moving on to Section 3, our focus shifts towards global bifurcation theory and the proof of Theorem \ref{global}. Finally, in Section 4, in the case $\sigma=\mu$, we analyze the limiting profile on the global bifurcation branch and provide the proof of Theorem \ref{limit-sys}.  For the sake of brevity, we adopt a convention of not distinguishing between $\beta$ and $\beta_{j}$ in the proof unless ambiguity arises.

 \section{Proof of Theorem \ref{bifurcation-NBC}}

 \subsection{Local bifurcation}
 
Observe that 
there are totally four constant solutions $(0,0)$, $(1,0)$, $(0,1)$, $\frac{1}{\beta^2\alpha\gamma-\mu\sigma}\big((\beta\alpha-\mu)\sigma, (\beta\gamma-\sigma)\mu\big)$. 

\begin{proof}[Proof of Theorem \ref{bifurcation-NBC}(i)]
		It is obvious that 
	\begin{equation*}
	(a_{\beta},b_{\beta})=\frac{1}{\beta^2\alpha\gamma-\mu\sigma}\big((\beta\alpha-\mu)\sigma, (\beta\gamma-\sigma)\mu\big)
	\end{equation*}
	is the unique constant positive solution to \eqref{system2-comp} by the Cramer's rule.
\end{proof}

\begin{proof}[Proof of Theorem \ref{bifurcation-NBC}(ii)]
  For the constant positive solution $(a_{\beta},b_{\beta})=\frac{1}{\beta^2\alpha\gamma-\mu\sigma}\big((\beta\alpha-\mu)\sigma, (\beta\gamma-\sigma)\mu\big)$, consider the mapping $	\tilde{F}:C_{r}^{2,\kappa}(\overline{B_1},\mathbb{R}^{2})\times\mathbb{R}\rightarrow C_{r}^{0,\kappa}(\overline{B_1},\mathbb{R}^{2})$ 
 \begin{equation*}
 	\begin{aligned}
 		\tilde{F}((u_1,u_2),\beta)&:=\begin{pmatrix}
 			-\Delta u_1-\mu (u_1+a_{\beta})(1-u_1-a_{\beta})+\beta \alpha (u_1+a_{\beta}) (u_2+b_{\beta})\\
 			-\Delta u_2-\sigma (u_2+b_{\beta})(1-u_2-b_{\beta})+\beta \gamma (u_1+a_{\beta}) (u_2+b_{\beta})
 		\end{pmatrix}
 		\\
 		&=\begin{pmatrix}
 				-\Delta u_1+(u_1+a_{\beta})(\mu u_1+\beta \alpha u_2)\\
 			-\Delta u_2+(u_2+b_{\beta})(\sigma u_2+\beta \gamma u_1)
 		\end{pmatrix}\\
 		&=:L(\beta)(u_1,u_2)+P((u_1,u_2),\beta),
 	\end{aligned}
 \end{equation*}
 where 
 \begin{equation*}
 	L(\beta)(u_1,u_2)=\begin{pmatrix}
 		-\Delta u_1+a_{\beta}(\mu u_1+\beta \alpha u_2)\\
 		-\Delta u_2+b_{\beta}(\sigma u_2+\beta \gamma u_1)
 	\end{pmatrix},\  P((u_1,u_2),\beta)=\begin{pmatrix}
 	u_1(\mu u_1+\beta \alpha u_2)\\
 	u_2(\sigma u_2+\beta \gamma u_1)
 	\end{pmatrix}.
 \end{equation*}
 Then 
 $ \tilde{F}((0,0),\beta)=0$, $P((0,0),\beta)=0$ for all $\beta$. Moreover,
 \begin{equation*}
 	\begin{aligned}
 		P_x\big((u_1,u_2),\beta\big)(h_1,h_2)=\begin{pmatrix}
 			h_1(\mu u_1+\beta \alpha u_2)+u_1(\mu h_1+\beta \alpha h_2)\\
 			h_2(\sigma u_2+\beta \gamma u_1)+u_2(\sigma h_2+\beta \gamma h_1)
 		\end{pmatrix},
 	\end{aligned}
 \end{equation*}
 and 
 \begin{equation*}
 	\begin{aligned}
 		P_{x\beta}\big((u_1,u_2),\beta\big)(h_1,h_2)=\begin{pmatrix}
 			\alpha	(h_1 u_2+u_1  h_2)\\
 			\gamma( h_2 u_1+u_2 h_1)
 		\end{pmatrix}.
 	\end{aligned}
 \end{equation*}
 Therefore
 $P_x\big((0,0),\beta)\big)=0$, $P_{x\beta}\big((0,0),\beta)\big)=0$ for all $\beta$.
 
 \textbf{Step 1:} We try to find $\ker L(\beta)$ and $\text{Ran} L(\beta)$.
 In fact,
 \begin{equation*}
 	L(\beta)(h_1,h_2)
 	= \begin{pmatrix}
 		-\Delta h_1 \\
 		-\Delta h_2
 	\end{pmatrix}+A(\beta)
 	\begin{pmatrix}
 		h_1\\
 		h_2
 	\end{pmatrix},\ \text{where}\  A(\beta)=\begin{pmatrix}
 		\mu a_{\beta}&\beta\alpha a_{\beta}\\
 		\beta\gamma b_{\beta}&\sigma b_{\beta}
 	\end{pmatrix}.
 	\end{equation*}
 	For the matrix $A(\beta)$, there are two eigenvalues: 
 	\begin{equation}\label{eigen1}
 		\begin{aligned}
 			\delta_1(\beta)=\frac12\Big(\mu a_{\beta}+\sigma b_{\beta}-\sqrt{4\beta^{2}\alpha\gamma a_{\beta}b_{\beta}+(\mu a_{\beta}-\sigma b_{\beta})^{2}}\Big), \quad
 			\delta_2(\beta)=\frac12\Big(\mu a_{\beta}+\sigma b_{\beta}+\sqrt{4\beta^{2}\alpha\gamma a_{\beta}b_{\beta}+(\mu a_{\beta}-\sigma b_{\beta})^{2}}\Big).
 		\end{aligned}
 	\end{equation}
 	 Furthermore,
 	 \begin{small}
 	\begin{equation*}
 A(\beta)Q_\beta=Q_\beta\begin{pmatrix}
 	\delta_1(\beta)&0\\
 	0&\delta_2(\beta)
 \end{pmatrix},
 \ Q_\beta=\begin{pmatrix}
-\frac{-\mu a_{\beta}+\sigma b_{\beta}+\sqrt{4\beta^{2}\alpha\gamma a_{\beta}b_{\beta}+(\mu a_{\beta}-\sigma b_{\beta})^{2}}}{2\beta\gamma b_{\beta}}&-\frac{-\mu a_{\beta}+\sigma b_{\beta}-\sqrt{4\beta^{2}\alpha\gamma a_{\beta}b_{\beta}+(\mu a_{\beta}-\sigma b_{\beta})^{2}}}{2\beta\gamma b_{\beta}}\\
 	1&1
 \end{pmatrix}.
 \end{equation*}
 \end{small}
 Thus if $(h_1,h_2)\in \ker L(\beta)$, then 
 \begin{equation*}
 \begin{cases}
 	-\Delta v_1+\delta_1(\beta) v_1=0,&\text{in}\  B_1,\\
 	-\Delta v_2+\delta_2(\beta) v_2=0,&\text{in}\  B_1,
 \end{cases}\ \  \text{where} 
 \begin{pmatrix}
 	v_1\\
 	v_2
 \end{pmatrix}=Q_{\beta}^{-1}\begin{pmatrix}
 	h_1\\h_2
 \end{pmatrix}.
 \end{equation*}
 Thus $v_2\equiv0$.
If  $0<-\delta_1(\beta)=\lambda_j\in \sigma(-\Delta_r)$ is a simple eigenvalue, then  $v_1=f_{j}(r)$ and $f_{j}(r)$ has exactly $j$ simple roots.     In addition, $	\delta_1(\frac{\sigma}{\gamma})=0$ and
\begin{equation*}
	\begin{aligned}
		- \delta_1(\beta)&=\frac{\sqrt{\mu\sigma}}{2(\beta^2\alpha\gamma-\mu\sigma)}\Big[-\sqrt{\mu\sigma}[\beta(\alpha+\gamma)-(\mu+\sigma)]+\sqrt{4\beta^2\alpha\gamma(\beta\alpha-\mu)(\beta\gamma-\sigma)+\mu\sigma[\beta(\alpha-\gamma)-(\mu-\sigma)]^2}\Big]\\
		&\rightarrow \sqrt{\mu\sigma}\quad \text{as}\ \beta\rightarrow+\infty.
	\end{aligned}
\end{equation*}
{By Proposition \ref{mono-beta}, $-\delta_{1}(\beta)$ is increasing with respect to $\beta\in(\frac{\sigma}{\gamma},+\infty)$.} 
 Thus if $\sqrt{\mu\sigma}>\lambda_j$,
  there exists a unique $\beta_j$ such that $-\delta_1(\beta_j)=\lambda_{j}$ is a simple eigenvalue and 
 \begin{equation}\label{kernel-L-beta}
 \ker L(\beta_j)=\Big\{(h_1,h_2)|  \begin{pmatrix}
 	h_1\\
 	h_2
 \end{pmatrix}=Q_{\beta_j}\begin{pmatrix}
 	bf_{j}(r)\\0
 \end{pmatrix}=bf_{j}(r)\begin{pmatrix}
m(\beta_{j})\\
 	1
 \end{pmatrix}, \text{where}\ b\in\mathbb{R}\Big\},
 \end{equation}
 where $$m(\beta_{j}):= -\frac{-\mu a_{\beta_j}+\sigma b_{\beta_j}+\sqrt{4\beta_j^{2}\alpha\gamma a_{\beta_j}b_{\beta_j}+(\mu a_{\beta_j}-\sigma b_{\beta_j})^{2}}}{2\beta_j\gamma b_{\beta_j}}=-\frac{\lambda_{j}+\sigma b_{\beta_{j}}}{\beta_{j}\gamma b_{\beta_{j}}}<0,$$
 and thus $\dim \ker L(\beta_j)=1$. 
 
 On the other hand,  since 
 \begin{equation*}
 	[\text{Ran} L(\beta_j)]^{\bot}= \ker \big(L(\beta_j)\big)^*,
 \end{equation*}
 we try to find $ \ker \big(L(\beta_j)\big)^*$. Notice that
 \begin{equation*}
 	\begin{aligned}
 		\langle  L(\beta)\big(u_{1},u_{2}\big),(v_{1},v_{2})\rangle
 		= \langle\big(u_{1},u_{2}\big), (L(\beta))^{*}(v_{1},v_{2})\rangle,
 	\end{aligned}
 \end{equation*}
 where 
 \begin{equation*}
 	\begin{aligned}
 		(L(\beta))^{*}(v_{1},v_{2})
 		= \begin{pmatrix}
 			-\Delta v_1\\
 		-\Delta v_2
 		\end{pmatrix}+	M({\beta})
 		\begin{pmatrix}
 			v_1\\
 			v_2
 		\end{pmatrix}, \text{where} \  M({\beta}):= \begin{pmatrix}
 			\mu a_{\beta}&\beta\gamma b_\beta\\
 		\beta\alpha 	a_{\beta}&\sigma b_{\beta}
 		\end{pmatrix}.
 	\end{aligned}
 \end{equation*}
 For the matrix $M(\beta)$, there are two eigenvalues:
 	\begin{equation*}
 		\begin{aligned}
 			\delta_1(\beta)=\frac12\Big(\mu a_{\beta}+\sigma b_{\beta}-\sqrt{4\beta^{2}\alpha\gamma a_{\beta}b_{\beta}+(\mu a_{\beta}-\sigma b_{\beta})^{2}}\Big), \quad
 			\delta_2(\beta)=\frac12\Big(\mu a_{\beta}+\sigma b_{\beta}+\sqrt{4\beta^{2}\alpha\gamma a_{\beta}b_{\beta}+(\mu a_{\beta}-\sigma b_{\beta})^{2}}\Big).
 		\end{aligned}
 	\end{equation*}
 Furthermore,
 	\begin{equation*}
	\begin{split}
 		&M(\beta)P_{\beta}=P_{\beta}\begin{pmatrix}
 			\delta_1(\beta)&0\\
 			0&\delta_2(\beta)
 		\end{pmatrix}\\
 		 &P_{\beta}=\begin{pmatrix}
-\frac{-\mu a_{\beta}+\sigma b_{\beta}+\sqrt{4\beta^{2}\alpha\gamma a_{\beta}b_{\beta}+(\mu a_{\beta}-\sigma b_{\beta})^{2}}}{2\beta\alpha a_{\beta}}&-\frac{-\mu a_{\beta}+\sigma b_{\beta}-\sqrt{4\beta^{2}\alpha\gamma a_{\beta}b_{\beta}+(\mu a_{\beta}-\sigma b_{\beta})^{2}}}{2\beta\alpha a_{\beta}}\\
 	1&1
 \end{pmatrix}. 
 \end{split}	\end{equation*}
  Thus if $(v_1,v_2)\in \ker L(\beta)^*$, then 
 \begin{equation*}
 	\begin{cases}
 	-\Delta w_1+\delta_1(\beta) w_1=0,&\text{in}\  B_1,\\
 	-\Delta w_2+\delta_2(\beta) w_2=0,&\text{in}\  B_1,
 	\end{cases}\ \  \text{where} 
 	\begin{pmatrix}
 		w_1\\
 		w_2
 	\end{pmatrix}=P_{\beta}^{-1}\begin{pmatrix}
 		v_1\\v_2
 	\end{pmatrix}.
 \end{equation*}
 Thus $w_2=0$ and if 
 $-\delta_1(\beta)=\lambda_j\in \sigma(-\Delta)$ is a simple eigenvalue, then  $w_1=f_{j}(r)$.    
 Consequently, 
 \begin{equation}\label{range-L-beta}
 \begin{split}
 	[\text{Ran} L(\beta_j)]^{\bot}&=	\ker L(\beta_j)^*=\\&=\Big\{\textbf{v}_0=(v_1,v_2)|  \begin{pmatrix}
 		v_1\\
 		v_2
 	\end{pmatrix}=P_{\beta_j}\begin{pmatrix}
 		a f_{j}(r)\\0
 	\end{pmatrix}=af_{j}(r)\begin{pmatrix}
 	\frac{\gamma b_{\beta_{j}}}{\alpha a_{\beta_{j}}}m(\beta_{j})\\
 		1
 	\end{pmatrix}, \text{where} \ a\in\mathbb{R}\Big\}.
	\end{split}
 \end{equation}
 
 \textbf{Step 2:} We claim that $L_{\beta}(\beta_j)\textbf{h}_0\notin \text{Ran} L(\beta_j)$, where $\textbf{h}_0\in \ker L(\beta_j)$  and  \begin{equation*}
\textbf{h}_0=
 f_{j}(r)\begin{pmatrix}
 m(\beta_j)\\
 	1
 \end{pmatrix}.
 \end{equation*}
As a matter of fact, by direct calculations,
\begin{equation*}
	\begin{aligned}
			\frac{\mathrm{d}a_{\beta}}{\mathrm{d} \beta}&=-\frac{\sigma\alpha(\beta^2\alpha\gamma+\mu\sigma-2\beta\gamma\mu)}{(\beta^2\alpha\gamma-\mu\sigma)^2},
		\\
		\frac{\mathrm{d}b_{\beta}}{\mathrm{d} \beta}&=-\frac{\mu\gamma(\beta^2\alpha\gamma+\mu\sigma-2\beta\alpha\sigma)}{(\beta^2\alpha\gamma-\mu\sigma)^2}.
	\end{aligned}
\end{equation*}
Thus
\begin{small}
\begin{equation*}
	\begin{aligned}
			L_{\beta}(\beta)\textbf{h}&=\begin{pmatrix}
				\frac{\mathrm{d}a_{\beta}}{\mathrm{d} \beta}(\mu h_1+\beta\alpha h_2)+a_{\beta}\alpha h_2\\
					\frac{\mathrm{d}b_{\beta}}{\mathrm{d} \beta}(\beta\gamma h_1+\sigma h_2)+b_{\beta}\gamma h_1
			\end{pmatrix}=C(\beta)\begin{pmatrix}
		h_1\\h_2
			\end{pmatrix},\\
		 C(\beta)&=\frac{\mu\sigma}{(\beta^2\alpha\gamma-\mu\sigma)^2}\begin{pmatrix}
				-	\alpha(\beta^2\alpha\gamma+\mu\sigma-2\beta\mu\gamma)&\alpha(\beta^2\alpha\gamma+\mu\sigma-2\beta\alpha\sigma)\\
				\gamma(\beta^2\alpha\gamma+\mu\sigma-2\beta\mu\gamma)&-\gamma(\beta^2\alpha\gamma+\mu\sigma-2\beta\alpha\sigma)
			\end{pmatrix}.
	\end{aligned}
\end{equation*}
 \end{small}
 Furthermore,
 \begin{equation*}
 	\begin{aligned}
 		L_{\beta}(\beta_j)\textbf{h}_0&=C(\beta_j)f_{j}(r)\begin{pmatrix}
 m(\beta_{j})\\
 		1
 		\end{pmatrix}\\
 		&=f_{j}(r)\frac{\mu\sigma}{(\beta_j^2\alpha\gamma-\mu\sigma)^2}\begin{pmatrix}
 			-	\alpha(\beta_j^2\alpha\gamma+\mu\sigma-2\beta_j\mu\gamma)&\alpha(\beta_j^2\alpha\gamma+\mu\sigma-2\beta_j\alpha\sigma)\\
 			\gamma(\beta_j^2\alpha\gamma+\mu\sigma-2\beta_j\mu\gamma)&-\gamma(\beta_j^2\alpha\gamma+\mu\sigma-2\beta_j\alpha\sigma)
 		\end{pmatrix}\begin{pmatrix}
 			m(\beta_{j})\\
 			1
 		\end{pmatrix}\\
 		&=f_{j}(r)\frac{\mu\sigma}{(\beta_j^2\alpha\gamma-\mu\sigma)^2}\big[	\gamma(\beta_j^2\alpha\gamma+\mu\sigma-2\beta_j\mu\gamma)m(\beta_{j})-\gamma(\beta_j^2\alpha\gamma+\mu\sigma-2\beta_j\alpha\sigma)\big]\begin{pmatrix}
 			-\frac{\alpha}{\gamma}\\
 			1
 		\end{pmatrix}
		\\
		&=
		f_{j}(r)\frac{\mu\sigma\gamma}{(\beta_j^2\alpha\gamma-\mu\sigma)^2}\big[	(\beta_j^2\alpha\gamma+\mu\sigma-2\beta_j\mu\gamma)m(\beta_{j})-(\beta_j^2\alpha\gamma+\mu\sigma-2\beta_j\alpha\sigma)\big]\begin{pmatrix}
 			-\frac{\alpha}{\gamma}\\
 			1
 		\end{pmatrix}.
 	\end{aligned}
 \end{equation*}
{Since $(\beta_j^2\alpha\gamma+\mu\sigma-2\beta_j\mu\gamma)m(\beta_{j})-(\beta_j^2\alpha\gamma+\mu\sigma-2\beta_j\alpha\sigma)< 0$ by Lemma \ref{est-m-beta}.}
 It is sufficient to prove that $\langle L_{\beta}(\beta_j)\textbf{h}_0, \textbf{v}_0\rangle\neq 0$, where $\textbf{v}_0$ is defined in \eqref{range-L-beta}. Equivalently, we only need to prove $-\frac{\alpha}{\gamma}\cdot\frac{\gamma b_{\beta_{j}}}{\alpha a_{\beta_{j}}}m(\beta_{j})+1\neq 0$, which is obvious since $\beta_j>\frac{\sigma}{\gamma}$ and $m(\beta_{j})<0$. Hence the claim holds.
 
 \textbf{Step 3:} Applying \cite[Theorem I.5.1]{bifurcation-2012} or \cite[Theorem I.3.3]{chang2005}, there exists a nontrivial continuously differentiable curve through $\big((0,0),\beta_j\big)$
 \begin{equation}\label{curve}
 	\{(\textbf{u}(s), \beta_{j}(s))\big| s\in(-\delta,\delta),(\textbf{u}(0), \beta_{j}(0))=(\textbf{0},\beta_j) \},
 \end{equation}
 such that $\tilde{F}(\textbf{u}(s), \beta_{j}(s))=\textbf{0}$ for $s\in(-\delta,\delta)$, and all solutions of $\tilde{F}(\tilde{u},\beta)$ in a neighbourhood of $(\textbf{0},\beta_j)$ are on the trivial solution line or on the nontrivial curve \eqref{curve}. Moreover $\textbf{u}(s)=s\textbf{h}_0+\psi(s\textbf{h}_0,\beta_{j}(s))$, and the direction of bifurcation is 
 \begin{equation*}
 \textbf{u}'(0)=\textbf{h}_0=
 f_{j}(r)\begin{pmatrix}
 m(\beta_{j})\\
 	1
 \end{pmatrix}.
 \end{equation*}
 \end{proof}
 \begin{remark}\label{derivative-beta-neq0}
 We try to prove  $\beta_{j}'(0)\neq0$. By \cite[Section I.6]{bifurcation-2012}, it is sufficient to prove 
 \begin{equation*}
 	D_{xx}^2\tilde{F}(\textbf{0},\beta_j)[\textbf{h}_0,\textbf{h}_0]\notin\text{Ran} D_x\tilde{F}(\textbf{0},\beta_j)=\text{Ran} L(\beta_j).
 \end{equation*}
 In fact, 
 \begin{equation*}
 	D_{xx}^2\tilde{F}(\textbf{u},\beta)[\textbf{h},\textbf{v}]=\begin{pmatrix}
 		h_1(\mu v_1+\beta \alpha v_2)+v_1(\mu h_1+\beta \alpha h_2)\\
 		h_2(\sigma v_2+\beta \gamma v_1)+v_2(\sigma h_2+\beta \gamma h_1)
 	\end{pmatrix}.
 \end{equation*}
 Moreover, 
 \begin{equation*}
 	D_{xx}^2\tilde{F}(\textbf{0},\beta_j)[\textbf{h}_0,\textbf{h}_0]=\begin{pmatrix}
 		2h_1(\mu h_1+\beta_{j} \alpha  h_2)\\
 		2h_2(\sigma h_2+\beta_{j} \gamma h_1)
 	\end{pmatrix}=2f_{j}^2(r)\begin{pmatrix}
m(\beta_{j})[\mu m(\beta_{j})+\beta_{j}\alpha]\\
 \sigma+\beta_{j}\gamma m(\beta_{j})
 	\end{pmatrix}.
 \end{equation*}
 So we only need to prove $2f_{j}^2(r)\big( m(\beta_{j})[\mu m(\beta_{j})+\beta_{j}\alpha],
 \sigma+\beta_{j}\gamma m(\beta_{j})\big)\cdot \textbf{v}_0\neq 0$ for $\textbf{v}_0$  defined in \eqref{range-L-beta} . Equivalently, it suffices to prove 
 \begin{equation*}
 	m(\beta_{j})[\mu m(\beta_{j})+\beta_{j}\alpha]\cdot \frac{\gamma b_{\beta_{j}}}{\alpha a_{\beta_{j}}}m(\beta_{j})+\sigma+\beta_{j}\gamma m(\beta_{j})\neq 0,
 \end{equation*}
 {By Lemma \ref{derivative}, if $\sqrt{\mu\sigma}$ is sufficiently close to $\lambda_k$, then it is true for $j=k$.} Furthermore, if we make slight adjustments to the parameters $\alpha$, $\sigma$, $\mu$ and $\gamma$, then $\beta_{j}'(0)$ will not equal to zero for any $j$.
 \end{remark}
 
 \subsection{Instability at constant positive solutions}
 
 By \cite[p46]{shi2011} and \cite{Kishimoto-1985},  the stability of positive solution $(u_1,u_2)$ to \eqref{system2-comp} is determined by the eigenvalue problem:
 \begin{equation}\label{eigenvalue-stability}
 	\begin{cases}
 		-\Delta\xi=\lambda \xi+(\mu-2\mu u_1-\beta\alpha u_2)\xi-\beta\alpha u_1 \eta,&\text{in}\ B_1,\\
 		-\Delta\eta=\lambda \eta-\beta\gamma u_2 \xi+(\sigma-2\sigma u_2-\beta\gamma u_1)\eta,&\text{in}\ B_1,\\
 		\frac{\partial \xi}{\partial n}=	\frac{\partial \eta}{\partial n}=0,&\text{on}\ \partial B_1.
 	\end{cases}
 \end{equation}
 We consider the real principal eigenvalue $\lambda(u_1,u_2)$, which has the smallest real part among all the spectrum points. The solution $(u_1,u_2)$ is stable if the real part $\Re\lambda(u_1,u_2)>0$ and it is unstable otherwise.

 \begin{proof}[Proof of Theorem \ref{bifurcation-NBC}(iii)]	We consider $(a_\beta,b_\beta)=\frac{1}{\beta^2\alpha\gamma-\mu\sigma}\big((\beta\alpha-\mu)\sigma, (\beta\gamma-\sigma)\mu\big)$ in \eqref{eigenvalue-stability}, then it is reduced to 
 	\begin{equation*}
 		\begin{cases}
 			-\Delta\xi=\lambda \xi-\mu a_\beta\xi-\beta\alpha a_\beta \eta,&\text{in}\ B_1,\\
 			-\Delta\eta=\lambda \eta-\beta\gamma b_\beta \xi-\sigma b_\beta\eta,&\text{in}\ B_1,\\
 			\frac{\partial \xi}{\partial n}=	\frac{\partial \eta}{\partial n}=0,&\text{on}\ \partial B_1,
 		\end{cases}
 	\end{equation*}
 	Equivalently,
 	\begin{equation*}
 		(-\Delta-\lambda)\begin{pmatrix}
 			\xi \\ \eta
 		\end{pmatrix}=-A(\beta)\begin{pmatrix}
 			\xi\\ \eta
 		\end{pmatrix},\ \text{where}\   A(\beta)=\begin{pmatrix}
 			\mu a_\beta& \beta\alpha a_\beta\\
 			\beta\gamma b_\beta&\sigma b_\beta
 		\end{pmatrix}.
 	\end{equation*}
 	For the matrix $A(\beta)$, there are two eigenvalues: 
 	\begin{equation*}
 		\begin{aligned}
 			\delta_1(\beta)=\frac12\Big(\mu a_{\beta}+\sigma b_{\beta}-\sqrt{4\beta^{2}\alpha\gamma a_{\beta}b_{\beta}+(\mu a_{\beta}-\sigma b_{\beta})^{2}}\Big), \\
 			\delta_2(\beta)=\frac12\Big(\mu a_{\beta}+\sigma b_{\beta}+\sqrt{4\beta^{2}\alpha\gamma a_{\beta}b_{\beta}+(\mu a_{\beta}-\sigma b_{\beta})^{2}}\Big).
 		\end{aligned}
 	\end{equation*}
 	Thus 
 	\begin{equation*}
 		\begin{cases}
 			(	-\Delta-\lambda) v_1+\delta_{1}(\beta) v_1=0,&\text{in}\  B_1,\\
 			(-\Delta-\lambda) v_2+\delta_2(\beta) v_2=0,&\text{in}\  B_1,
 		\end{cases}\ \  \text{where} 
 		\begin{pmatrix}
 			v_1\\
 			v_2
 		\end{pmatrix}=Q_{\beta}^{-1}\begin{pmatrix}
 			\xi\\ \eta
 		\end{pmatrix}.
 	\end{equation*}
 	Thus in order that $(\xi,\eta)\neq (0,0)$, it is required that 
 	$\lambda-\delta_1(\beta)=\lambda_j\in \sigma(-\Delta)$. Notice that 	
	$-\delta_1(\beta)\rightarrow \sqrt{\mu\sigma}$ as  $\beta\rightarrow +\infty$. 
 	Thus the principal eigenvalue $\lambda=\delta_1(\beta)<0$, hence the solution $\frac{1}{\beta^2\alpha\gamma-\mu\sigma}\big((\beta\alpha-\mu)\sigma, (\beta\gamma-\sigma)\mu\big)$ is unstable. 
 \end{proof}

 \section{Global bifurcation} 
 \begin{lemma}\label{upperbound-ui}
 	Any positive solution of \eqref{system2-comp} is  such that $0<u_i<1$ in $\overline{B_1}$ for any $i=1, 2, 3$.
 \end{lemma}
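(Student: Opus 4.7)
The plan is to compare each component of a positive solution with the constant $1$ via the strong maximum principle together with Hopf's boundary point lemma; the key structural observation is that the logistic reaction $\mu u_i(1-u_i)$ combined with the nonpositive sign of the competitive coupling makes $u_i-1$ a subsolution of a favourable linear operator.

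Concretely, setting $w_1:=u_1-1$, the first equation of \eqref{system2-comp} rearranges to
\begin{equation*}
-\Delta w_1+\mu u_1\,w_1 \;=\; -\beta\alpha u_1 u_2 \;\le\; 0 \quad\text{in } B_1,
\end{equation*}
with the zeroth-order coefficient $\mu u_1\ge 0$. Suppose for contradiction that $M:=\max_{\overline{B_1}}w_1\ge 0$. If this maximum is attained at an interior point, the strong maximum principle (for $-\Delta+\mu u_1 I$) forces $w_1\equiv M$, hence $u_1\equiv M+1$; plugging back into the original equation yields $(M+1)(\mu M+\beta\alpha u_2)\equiv 0$, which is incompatible with $M+1>0$, $M\ge 0$, and $u_2>0$. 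If instead the maximum is attained only on $\partial B_1$, Hopf's boundary point lemma gives $\partial w_1/\partial n>0$ there, contradicting the Neumann condition $\partial u_1/\partial n=0$. Therefore $u_1<1$ on $\overline{B_1}$, and the symmetric argument applied to $w_2:=u_2-1$ in the second equation gives $u_2<1$.

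For the lower bound, either the definition of a positive solution already entails $u_i>0$, or, if one only assumes nonnegative and nontrivial components, one writes $-\Delta u_i+c_i(x)\,u_i=0$ with $c_i$ bounded and applies the strong maximum principle to rule out an interior zero and Hopf's lemma to rule out a boundary zero. The main point meriting care is the handling of the Neumann boundary: the interior maximum principle alone cannot exclude a maximum of $w_1$ sitting on $\partial B_1$, and this gap is precisely what Hopf's lemma closes; no further obstacle is anticipated.
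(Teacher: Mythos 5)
Your proof is correct and takes essentially the same route as the paper: both set $w_i=u_i-1$ and observe that $w_i$ is a strict subsolution of a linear elliptic operator with nonnegative zeroth-order coefficient under the homogeneous Neumann condition, then conclude $w_i<0$ on $\overline{B_1}$ by a maximum principle. The only difference is that the paper disposes of the Neumann boundary in one stroke by citing Amann's Maximum Principle 4.1, whereas you unpack that citation into the standard interior strong maximum principle plus the Hopf boundary-point lemma --- which is precisely the mechanism the cited result encapsulates.
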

 \begin{proof}
 	Take  $w_i=u_{i}-1$, then 
 	\begin{equation*}
 		\begin{cases}
 			(-\Delta+\mu)w_i <0,&\text{in}\ B_1,\\
 			\frac{\partial w_i}{\partial n}=0,&\text{on} \ \partial B_1.
 		\end{cases}
 	\end{equation*}
 		By  \cite[Maximum Principle 4.1]{amann-1976},  $w_i<0$ in $\overline{B_1}$.
 	Finally, we conclude that $0<u_i<1$  in $\overline{B_1}$.
 \end{proof}

 \begin{proof}[Proof of Theorem \ref{global}]
 We consider the following system:
 \begin{equation*}
 	\begin{cases}
 		(-\Delta+\mu) z_1=\mu u_1(2-u_1)-\beta \alpha u_1u_2,& \text{in}\ B_1,\\
 		(-\Delta+\sigma) z_2=\sigma u_2(2-u_2)-\beta \gamma u_1u_2,& \text{in}\ B_1,\\
 		\frac{\partial z_1}{\partial n}=	\frac{\partial z_2}{\partial n}=0,&\text{on}\ \partial B_1.
 	\end{cases}
 \end{equation*} 
 Define a map $F: C_{r}^{0,\kappa}(\overline{B_1},\mathbb{R}^{2})\times \mathbb{R}\rightarrow C_{r}^{0,\kappa}(\overline{B_1},\mathbb{R}^{2})$:
 \begin{equation*}
 	F\big((u_1,u_2),\beta\big)=\begin{pmatrix}
 		u_1\\u_2
 	\end{pmatrix}-	\begin{pmatrix}
 (-\Delta+\mu)^{-1}		\big[\mu u_1(2-u_1)-\beta \alpha u_1u_2\big]\\
 (-\Delta+\sigma)^{-1}	\big[	\sigma u_2(2-u_2)-\beta \gamma u_1u_2\big]
 	\end{pmatrix}.
 \end{equation*}
 Then $F\big((a_{\beta},b_{\beta}),\beta\big)=0$ for all $\beta$. Moreover, 
 define
 $\hat{F}: C(\overline{B_1},\mathbb{R}^2)\times \mathbb{R}\rightarrow C(\overline{B_1},\mathbb{R}^2)$:
 \begin{equation*}
 	\hat{F}\big((u_1,u_2),\beta\big)=F\big((u_1+a_{\beta},u_2+b_{\beta}),\beta\big)
 	=\begin{pmatrix}
 		u_1+a_{\beta}\\u_2+b_{\beta}
 	\end{pmatrix}-	\begin{pmatrix}
 (-\Delta+\mu)^{-1}	\big\{	(u_1+a_{\beta})	[\mu (1-u_1)-\beta \alpha u_2]\big\}\\
 (-\Delta+\sigma)^{-1}	\big\{	(u_2+b_{\beta})	[\sigma (1-u_2)-\beta \gamma u_1]\big\}
 	\end{pmatrix}.
 \end{equation*}
 Thus $	\hat{F}\big((0,0),\beta\big)=0$ for all $\beta$.
 To apply the global Rabinowitz bifurcation theorem \cite[Theorem II.3.3]{bifurcation-2012}, we need to calculate the $\text{ind}\Big(D_x \hat{F}\big((0,0),\beta\big), (0,0)\Big)$.
 In fact,
 \begin{equation*}
 	\begin{aligned}
 		D_x \hat{F}\big((u_1, u_2),\beta\big)(h_1,h_2)	 		=\begin{pmatrix}
 			h_1\\h_2
 		\end{pmatrix}-\begin{pmatrix}
 		(-\Delta+\mu)^{-1}\big\{[\mu(1-2u_2-a_{\beta})-\beta\alpha u_2]h_1-\beta\alpha(u_1+a_{\beta})h_2\big\}\\
 		(-\Delta+\sigma)^{-1}	\big\{-\beta\gamma(u_2+b_{\beta})h_1+[\sigma(1-2u_2-b_{\beta})-\beta\gamma u_1]h_2\big\}
 		\end{pmatrix}.
 	\end{aligned}
 \end{equation*}
 In particular,
 \begin{equation*}
 	\begin{aligned}
 		D_x \hat{F}\big((0, 0),\beta\big)(h_1,h_2)	
 		&=\begin{pmatrix}
 			h_1\\h_2
 		\end{pmatrix}-\begin{pmatrix}
 			(-\Delta+\mu)^{-1}\big\{\mu(1-a_{\beta})h_1-\beta\alpha a_{\beta}h_2\big\}\\
 		(-\Delta+\sigma)^{-1}	\big\{-\beta\gamma b_{\beta}h_1+\sigma(1-b_{\beta})h_2\big\}
 		\end{pmatrix}.
 	\end{aligned}
 \end{equation*}
 It is easy to see that $\ker D_x \hat{F}\big((0, 0),\beta_j\big)(h_1,h_2)	=\ker L(\beta_j)$ defined in \eqref{kernel-L-beta}.
 
 We try to calculate  $\text{ind}\Big(D_x \hat{F}\big((0,0),\beta\big), (0,0)\Big)$ for $\beta\neq \beta_j$ around $\beta_j$. In what follows, we make use of 
 the index formula given by \cite[Proposition 14.5]{ziedler1993}. 
 If $\lambda>1$ such that 
 \begin{equation*}
 	\begin{pmatrix}
 		(-\Delta+\mu)^{-1}\big\{\mu(1-a_{\beta})h_1-\beta\alpha a_{\beta}h_2\big\}\\
 		(-\Delta+\sigma)^{-1}	\big\{-\beta\gamma b_{\beta}h_1+\sigma(1-b_{\beta})h_2\big\}
 	\end{pmatrix}=\lambda\begin{pmatrix}
 		h_1\\h_2
 	\end{pmatrix}.
 \end{equation*}
 Equivalently,
 \begin{equation*}
 		-\Delta \begin{pmatrix}
 	h_1\\  h_2
 	\end{pmatrix}=D(\beta,\lambda)\begin{pmatrix}
 		h_1\\ h_2
 	\end{pmatrix}, \text{where}
 	\ D(\beta,\lambda)=\begin{pmatrix}
 		-\mu+\frac{\beta\alpha b_{\beta}}{\lambda}&\frac{-\beta\alpha a_{\beta}}{\lambda}\\
 		\frac{-	\beta\gamma b_{\beta}}{\lambda}&	-\sigma+\frac{\beta\gamma a_{\beta}}{\lambda}
 	\end{pmatrix}
	=\frac{1}{\lambda}\begin{pmatrix}
 	-\mu\lambda	+\beta\alpha b_{\beta}&-\beta\alpha a_{\beta}\\
 		-	\beta\gamma b_{\beta}&-\sigma\lambda+\beta\gamma a_{\beta}
 	\end{pmatrix}.
 \end{equation*}
 For $D(\beta,\lambda)$, there are two eigenvalues: 
 \begin{equation}\label{eigen-lambda}
 	\begin{aligned}
 	\delta_{1}(\beta,\lambda)&=\frac{1}{2\lambda}\Big[	-\mu\lambda	+\beta\alpha b_{\beta}-\sigma\lambda+\beta\gamma a_{\beta}-\sqrt{(\mu\lambda	-\beta\alpha b_{\beta}+\sigma\lambda-\beta\gamma a_{\beta})^2-4(\mu\sigma\lambda^2-\beta\alpha b_{\beta}\sigma\lambda-\beta\gamma a_{\beta}\mu\lambda)}\Big].
 	\\
 		\delta_{2}(\beta,\lambda)&=\frac{1}{2\lambda}\Big[	-\mu\lambda	+\beta\alpha b_{\beta}-\sigma\lambda+\beta\gamma a_{\beta}+\sqrt{(-\mu\lambda	+\beta\alpha b_{\beta}-\sigma\lambda+\beta\gamma a_{\beta})^2-4(\mu\sigma\lambda^2-\beta\alpha b_{\beta}\sigma\lambda-\beta\gamma a_{\beta}\mu\lambda)}\Big].
 	\end{aligned}
 \end{equation}
 
  Moreover, there exists $R_{\beta,\lambda}$ such that
 \begin{equation*}
 	D(\beta,\lambda) R_{\beta,\lambda}=R_{\beta,\lambda}\begin{pmatrix}
 			\delta_{1}(\beta,\lambda)&0\\
 		0&	\delta_{2}(\beta,\lambda)
 	\end{pmatrix}, \ \text{where}\ R_{\beta,\lambda}
	=\begin{pmatrix}
 		\frac{\mu\lambda-\beta\alpha b_{\beta}+\delta_{2,\lambda}(\beta)}{\beta\gamma b_{\beta}}&	-\frac{\sigma\lambda-\beta\gamma a_{\beta}+\delta_{2,\lambda}(\beta)}{\beta\gamma b_{\beta}}\\
 		1 &1
 	\end{pmatrix}.
 \end{equation*} 
 Therefore
 \begin{equation*}
 	\begin{cases}
 		-\Delta v_1=	\delta_{1}(\beta,\lambda) v_1,\\
 		-\Delta v_2=	\delta_{2}(\beta,\lambda) v_2,
 	\end{cases}\ \text{where}
 	\begin{pmatrix}
 		v_1\\v_2
 	\end{pmatrix}=R_{\beta,\lambda}^{-1}\begin{pmatrix}
 		h_1\\h_2
 	\end{pmatrix}.
 \end{equation*}
 Thus $v_1\equiv 0$ since $\delta_{1}(\beta,\lambda) <0$, and $v_2=f_{j}(r)$ if $\delta_{2}(\beta,\lambda)  =\lambda_j$. Indeed, by Proposition \ref{mono-lambda},
 \begin{equation*}
 \frac{\partial}{\partial \lambda}\delta_{2}(\beta,\lambda) <0,\quad   \frac{\partial}{\partial \beta}\delta_{2}(\beta,\lambda) >0.
 \end{equation*}
 And 
   $\delta_{2}(\beta_{j},1)=\lambda_j$, $\delta_{2}(\beta,\lambda)\rightarrow -\mu$ as $\lambda\rightarrow+\infty$. Consequently, for $\epsilon$ sufficiently small, we obtain $\delta_{2}(\beta,1)<\lambda_j$ in $(\beta_j-\epsilon,\beta_j)$ and $\delta_{2}(\beta,1)>\lambda_j$ in $(\beta_j,\beta_j+\epsilon)$. Thus  $	\text{ind}\Big(D_x \hat{F}\big((0,0),\beta\big), (0,0)\Big)$ jumps at $\beta=\beta_j$ from $+1$ to $-1$ or vice versa based on \cite[Proposition 14.5]{ziedler1993}.  It follows by \cite[(II.3.4), Theorem II.3.3]{bifurcation-2012} that $\big((0,0),\beta_j\big)$ is a global bifurcation point. Finally, let $\mathcal{S}$ be the closure of the set of nontrivial solutions of $\hat{F}\big((0,0),\beta\big)=0$ in $C(\overline{B}_1,\mathbb{R}^2)\times \mathbb{R}$. 
 Applying \cite[Theorem II.3.3, Theorem II.5.9]{bifurcation-2012}, it holds that $\big((0,0),\beta_j\big)\in \mathcal{S}$, and let $\mathcal{C}_j$ be the component of $\mathcal{S}$ to which $\big((0,0),\beta_j\big)$ belongs. Then either (i) $\mathcal{C}_j$
 is unbounded, or (ii) $\mathcal{C}_j$ contains some $\big((0,0),\beta_m\big)$ where $\beta_m\neq \beta_j$. 
 \end{proof}

 \section{Limiting configuration}
  \subsection{Proof of Theorem \ref{limit-sys}(i)} 
  Indeed, in the situation where  $\sigma=\mu\notin\sigma(-\Delta_r)$, Gui-Lou \cite{Gui-Lou-1994} have already demonstrated that every global bifurcation branch is unbounded. For the reader's convenience, we still present the detailed proof.  
 \begin{lemma}\label{isolated}
 	Under the assumptions $\alpha>\gamma>0$, $\beta>\frac{\mu}{\gamma}$ and $0<\mu \notin\sigma(-\Delta_r)$, the constant non-negative solutions $(0,0)$, $(0,1)$ and $(1,0)$ are isolated solutions, i.e., they do not appear on any bifurcation branch.
 \end{lemma}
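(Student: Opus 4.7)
The plan is to show that at each of the three constant non-negative solutions $(u_1^*,u_2^*)\in\{(0,0),(1,0),(0,1)\}$, the linearization of the stationary operator in the radial Neumann space $C^{2,\kappa}_r(\overline{B_1},\mathbb{R}^2)$ has trivial kernel for every admissible $\beta>\mu/\gamma$. Once invertibility is established, the implicit function theorem furnishes a neighborhood of $\big((u_1^*,u_2^*),\beta\big)$ in which the only solution of the system is $(u_1^*,u_2^*)$ itself, so no sequence of non-constant solutions can accumulate there and no branch from Theorem \ref{global} can reach these points.

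The linearized kernel equation at a constant solution $(u_1^*,u_2^*)$ reads
\begin{equation*}
\begin{cases}
-\Delta h_1 = (\mu - 2\mu u_1^* - \beta\alpha u_2^*)\,h_1 - \beta\alpha u_1^*\,h_2, \\
-\Delta h_2 = -\beta\gamma u_2^*\,h_1 + (\sigma - 2\sigma u_2^* - \beta\gamma u_1^*)\,h_2,
\end{cases}
\end{equation*}
with Neumann boundary conditions. At $(0,0)$ it decouples into $(-\Delta-\mu)h_1=0$ and $(-\Delta-\sigma)h_2=0$; since $\mu=\sigma\notin\sigma(-\Delta_r)$ by hypothesis, both components vanish. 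At $(1,0)$ and $(0,1)$ the system is triangular rather than diagonal, so I would dispatch the decoupled scalar equation first. At $(1,0)$ the second equation reduces to $(-\Delta+\beta\gamma-\sigma)h_2=0$, and the hypothesis $\beta>\mu/\gamma=\sigma/\gamma$ gives $\beta\gamma-\sigma>0$, so this operator is strictly positive on radial Neumann functions (its smallest eigenvalue is precisely $\beta\gamma-\sigma>0$), forcing $h_2\equiv 0$; back-substitution then gives $(-\Delta+\mu)h_1=0$, whence $h_1\equiv 0$. The case $(0,1)$ is symmetric: the first equation becomes $(-\Delta+\beta\alpha-\mu)h_1=0$, where $\alpha>\gamma$ combined with $\beta\gamma>\mu$ yields $\beta\alpha>\mu$, hence $h_1\equiv 0$, and then $(-\Delta+\sigma)h_2=0$ forces $h_2\equiv 0$.

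I do not anticipate a substantive obstacle. The only bookkeeping point worth noting is a division of labor among hypotheses: the spectral condition $\mu\notin\sigma(-\Delta_r)$ is used exclusively at $(0,0)$ and is essential there (since the linearization is diagonal with constant coefficients), whereas at $(1,0)$ and $(0,1)$ no spectral condition on $\beta$ enters; instead, the strict inequality $\beta>\mu/\gamma$ together with $\alpha>\gamma$ is exactly what makes the decoupled scalar block a strictly positive operator. With the three invertibility computations in hand, the isolation statement — and hence the fact that these constant solutions cannot lie on any of the branches $\mathcal{C}_j$ — follows as a standard consequence of the implicit function theorem.
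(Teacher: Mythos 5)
Your proposal is correct and is essentially the paper's argument: both proofs reduce to the same three kernel computations, using the triangular structure at $(1,0)$ and $(0,1)$ (where $\beta\gamma-\mu>0$ and $\beta\alpha-\mu>0$ make the decoupled blocks strictly positive) and the spectral condition $\mu\notin\sigma(-\Delta_r)$ only at $(0,0)$. The sole difference is packaging — the paper recasts the system as a compact fixed-point map $\Phi$ and shows $1$ is not an eigenvalue of $\Phi'$ at each constant state, whereas you invoke invertibility of the elliptic linearization (Fredholm of index zero) plus the implicit function theorem, which is an equivalent route to the same isolation conclusion.
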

 \begin{proof}
 	We consider
 	\begin{equation}\label{system-inverse2-comp}
 		\begin{cases}
 			(-\Delta+\mu+2\beta\alpha) z_1=\mu u_1(2-u_1)+2\beta\alpha u_{1}-\beta \alpha u_1u_2,& \text{in}\ B_1,\\
 			(-\Delta+\mu+2\beta\gamma) z_2=\mu u_2(2-u_2)+2\beta\gamma  u_{2}-\beta \gamma u_1u_2,& \text{in}\ B_1,\\
 			\frac{\partial z_1}{\partial n}=	\frac{\partial z_2}{\partial n}=0,&\text{on}\ \partial B_1,
 		\end{cases}
 	\end{equation}
 	 Define a map
 	\begin{equation*}
 		\Phi(u_{1}, u_{2})=(z_{1}, z_{2}) \Leftrightarrow (z_{1}, z_{2}) \ \text{solves}\  \eqref{system-inverse2-comp}.
 	\end{equation*}
 	It is easy to see that the fixed point of $\Phi$ is a solution to \eqref{system2-comp}. 
 	Suppose 
 	\begin{equation*}
 		(u_{1}, u_{2})\in B:=\{(u_{1}, u_{2}): u_{i}\in C(\overline{B_{1}}), 0\le u_{i}(x)\le 2\  \text{in}\   B_{1},   \  i=1,2\},
 	\end{equation*}
 	which is a subset of Banach space $C(\overline{B_1},\mathbb{R}^2)$ 
 	with the norm 
 	\begin{equation*}
 		\|(u_1, u_2)\|:=\max\{\|u_1\|_{C(\overline{B_1})}, \|u_2\|_{C(\overline{B_1})}\}.
 	\end{equation*}
 		Firstly, it is easy to see that the RHS of each equation must be non-negative and less than $\mu+4\beta\alpha$ or $\mu+4\beta\gamma$. By  \cite[Maximum Principle 4.1]{amann-1976}, we know \textcolor{black}{$z_{i}$ is a non-negative constant or  $z_{i}> 0$ in $\overline{B_{1}}$}. By \cite[Proposition 19.7]{resolvent-book}, the norm 
 		\begin{equation*}
 			\|(-\Delta+\mu+2\beta\alpha)^{-1}\|\le \frac{1}{\mu+2\beta\alpha},  \quad	\|(-\Delta+\mu+2\beta\gamma)^{-1}\|\le \frac{1}{\mu+2\beta\gamma}.
 		\end{equation*}
 		Hence $\|z_1\|_{C(\overline{B_{1}})}\le \frac{\mu+4\beta\alpha}{\mu+2\beta\alpha}<2$ and $\|z_2\|_{C(\overline{B_{1}})}\le \frac{\mu+4\beta\gamma}{\mu+2\beta\gamma}<2$.
 		Thus	$\Phi: B\rightarrow B$ is  compact.
 		Furthermore,
 		\begin{equation*}
 			\begin{aligned}
 				\Phi'\big((u_{1}, u_{2})\big)(h_{1},h_{2})
 				=\begin{pmatrix}
 					(-\Delta+\mu+2\beta\alpha)^{-1}	\{[2\mu(1-u_{1})+\beta\alpha(2-u_{2})]h_{1}-\beta\alpha u_{1}h_{2}\}\\
 					(-\Delta+\mu+2\beta\gamma)^{-1}	\{-\beta\gamma u_{2}h_{1}+ [2\mu(1-u_{2})+\beta\gamma(2-u_{1})]h_{2}\}
 				\end{pmatrix}.
 			\end{aligned}
 		\end{equation*}
 		
 		\begin{itemize}
 			\item[(a)] 
 			In order that  $(0,0)$ is isolated,  it suffices to prove $1$ is not the eigenvalue of $\Phi'\big((0, 0)\big)$. Indeed,
 			\begin{equation*}
 				\begin{aligned}
 					&\quad\Phi'\big((0, 0)\big)(h_{1},h_{2})=\begin{pmatrix}
 						(-\Delta+\mu+2\beta\alpha)^{-1}	\{(2\mu+2\beta\alpha) h_{1}\}\\
 						(-\Delta+\mu+2\beta\gamma)^{-1}	\{ (2\mu+2\beta\gamma)h_{2}\}
 					\end{pmatrix}.
 				\end{aligned}
 			\end{equation*}
 			Notice that $\Phi'\big((0, 0)\big)(h_{1},h_{2})=(h_1, h_2)$ if and only if
 			$-\Delta h_1=\mu h_1$ or $-\Delta h_{2}=\mu h_{2}$. If  { $\mu \notin\sigma(-\Delta_{r})$}, it follows that $I-\Phi'\big((0,  0)\big)$ is invertible and  $(0,0)$ is an isolated fixed point.

 			\item[(b)] To show that  $(1,0)$ is isolated,  it is sufficient to prove $1$ is not the eigenvalue of $\Phi'\big((1, 0)\big)$.
 			\begin{equation*}
 				\begin{aligned}
 					&\quad\Phi'\big((1, 0)\big)(h_{1},h_{2})=\begin{pmatrix}
 						(-\Delta+\mu+2\beta\alpha)^{-1}	\{2\beta\alpha h_{1}-\beta\alpha h_2\}\\
 						(-\Delta+\mu+2\beta\gamma)^{-1}	\{ (2\mu+\beta\gamma) h_{2}\}
 					\end{pmatrix}.
 				\end{aligned}
 			\end{equation*}
 			Notice that $\Phi'\big((1, 0)\big)(h_{1},h_{2})=(h_1, h_2)$ if and only if
 			\begin{equation*}
 				(-\Delta+\mu)h_1=-\beta\alpha h_2,\quad
 				(-\Delta+\beta\gamma-\mu)h_2=0,
 			\end{equation*}
 			then $h_2=h_1\equiv0$. Thus $I-\Phi'\big((1, 0)\big)$ is invertible and  $(1,0)$ is an isolated fixed point.

 			\item[(c)] We try to prove $(0,1)$ is isolated. Equivalently, we need to prove $1$ is not the eigenvalue of $\Phi'\big((0, 1)\big)$.
 			\begin{equation*}
 				\begin{aligned}
 					&\quad\Phi'\big((0,1)\big)(h_{1},h_{2})=\begin{pmatrix}
 						(-\Delta+\mu+2\beta\alpha)^{-1}	\{(2\mu+\beta\alpha)h_1\}\\
 						(-\Delta+\mu+2\beta\gamma)^{-1}	\{-\beta\gamma h_1+ 2\beta\gamma h_{2}\}
 					\end{pmatrix}.
 				\end{aligned}
 			\end{equation*}
 			Notice that $\Phi'\big((1, 0)\big)(h_{1},h_{2})=(h_1, h_2)$ if and only if
 			\begin{equation*}
 				(-\Delta+\beta\alpha-\mu)h_1=0,\quad
 				(-\Delta+\mu)h_2=-\beta\gamma h_1,
 			\end{equation*}
 			then $h_2=h_1\equiv0$.
 			Thus $(0,1)$ is an isolated fixed point of $\Phi$.  
 		\end{itemize}
 	\end{proof}
 	
 	\begin{lemma}\label{positive-branch}
 		On each bifurcation branch, the solutions are always positive.
 	\end{lemma}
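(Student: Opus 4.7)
My plan is to show that $\mathcal{C}_j^+ := \{((u_1,u_2),\beta)\in\mathcal{C}_j: u_1,u_2>0 \text{ in }\overline{B_1}\}$ is both open and closed in the connected set $\mathcal{C}_j$, and hence equals $\mathcal{C}_j$. Openness in the ambient $C(\overline{B_1},\mathbb{R}^2)$-topology is immediate, and non-emptiness follows from Theorem \ref{bifurcation-NBC}(ii): near the bifurcation point, the local curve consists of $O(s)$-perturbations of the strictly positive constant state $(a_{\beta_j},b_{\beta_j})$, hence stays strictly positive for $|s|$ small. The whole task therefore reduces to proving closedness.

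For closedness, let $((u_1^n,u_2^n),\beta^n)\in\mathcal{C}_j^+$ converge to $((u_1^*,u_2^*),\beta^*)\in\mathcal{C}_j$ with $u_i^*\ge 0$. Each limit component satisfies an equation of the form $-\Delta u_i^* + c_i(x)\,u_i^* = 0$ with bounded coefficient, so the strong maximum principle together with Hopf's lemma and the Neumann condition force either $u_i^*>0$ on $\overline{B_1}$ or $u_i^*\equiv 0$. Assume positivity fails. Then $u_1^*\equiv 0$ or $u_2^*\equiv 0$, and the two alternatives are handled by analogous arguments; I treat the first. Then $u_2^*$ solves the scalar Neumann problem $-\Delta u_2^*=\mu u_2^*(1-u_2^*)$ with $0\le u_2^*\le 1$ (the upper bound by evaluating the equation at an interior maximum, and by Hopf ruling out a boundary maximum unless $u_2^*$ is constant). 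The subcases $u_2^*\equiv 0$ and $u_2^*\equiv 1$ place the limit at the isolated constants $(0,0)$ and $(0,1)$, which are forbidden on any branch by Lemma \ref{isolated}.

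The main obstacle is the remaining subcase in which $u_2^*$ is a non-constant positive scalar solution. I will handle it by rescaling $\phi^n:=u_1^n/\|u_1^n\|_\infty$, which satisfies $\|\phi^n\|_\infty=1$ and $-\Delta\phi^n=\phi^n[\mu(1-u_1^n)-\beta^n\alpha u_2^n]$ with homogeneous Neumann BC. Standard $C^{2,\kappa}$-estimates yield a subsequential limit $\phi\ge 0$ with $\|\phi\|_\infty=1$ solving $-\Delta\phi=\phi(\mu-\beta^*\alpha u_2^*)$ and $\partial_n\phi=0$ on $\partial B_1$; another application of the strong maximum principle and Hopf gives $\phi>0$ in $\overline{B_1}$. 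Multiplying the $u_2^*$-equation by $\phi$, the $\phi$-equation by $u_2^*$, subtracting and integrating over $B_1$ with the Neumann boundary terms canceling yields
\begin{equation*}
(\beta^*\alpha-\mu)\int_{B_1}\phi\,(u_2^*)^2\,dx=0,
\end{equation*}
and since $\phi\,(u_2^*)^2>0$ in $B_1$ we obtain $\beta^*\alpha=\mu$. Substituting this back and integrating the $\phi$-equation yields $\mu\int_{B_1}\phi(1-u_2^*)\,dx=0$, which combined with $u_2^*\le 1$ and $\phi>0$ forces $u_2^*\equiv 1$, contradicting non-constancy. The companion case $u_2^*\equiv 0$ is settled by the symmetric rescaling $\psi^n:=u_2^n/\|u_2^n\|_\infty$, yielding $\beta^*\gamma=\mu$ and $u_1^*\equiv 1$, i.e., the limit is the isolated point $(1,0)$, again ruled out by Lemma \ref{isolated}. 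This closes $\mathcal{C}_j^+$ in $\mathcal{C}_j$ and completes the proof.
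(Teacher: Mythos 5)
Your proof is correct, and it rests on the same two pillars as the paper's: strict positivity of the local curve near $((a_{\beta_j},b_{\beta_j}),\beta_j)$, and the strong maximum principle dichotomy (each component is either strictly positive on $\overline{B_1}$ or identically zero) combined with the exclusion of the constants $(0,0)$, $(1,0)$, $(0,1)$ via Lemma \ref{isolated}. The organization, however, differs in two genuine ways. First, the paper propagates positivity by a ``first touching'' argument along a parametrization $s\mapsto u(s)$ of the branch, which is slightly informal since the global continuum $\mathcal{C}_j$ need not be a curve; your open-closed argument on the connected set $\mathcal{C}_j$ is the more robust formalization of the same idea. Second, in the semitrivial limit case (say $u_1^*\equiv 0$), the paper simply asserts that $u_2^*\equiv 0$ or $u_2^*\equiv 1$; the tacit justification is that $0\le u_2^*\le 1$ (inherited in the limit from Lemma \ref{upperbound-ui}) and integrating $-\Delta u_2^*=\mu u_2^*(1-u_2^*)$ over $B_1$ with Neumann data gives $\int_{B_1}\mu u_2^*(1-u_2^*)\,\mathrm{d}x=0$ with non-negative integrand, forcing $u_2^*\in\{0,1\}$ pointwise and hence constancy. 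You evidently did not notice this one-line shortcut — even though you had both ingredients in hand, since you invoke $u_2^*\le 1$ at the end of your argument — and instead you eliminate a hypothetical non-constant semitrivial limit by rescaling the vanishing component, $\phi^n=u_1^n/\|u_1^n\|_\infty$, and a Green identity; your computation $(\beta^*\alpha-\mu)\int_{B_1}\phi\,(u_2^*)^2\,\mathrm{d}x=0$ is correct, and the subsequent contradiction $u_2^*\equiv 1$ is valid. So your extra subcase is vacuous and the rescaling machinery is redundant, but nothing in it is wrong; if anything, your version makes explicit (indeed, over-proves) a step the paper leaves unjustified, at the cost of extra compactness bookkeeping ($C^{2,\kappa}$ bounds for $\phi^n$, which do hold uniformly since $0<u_i^n<1$ and $\beta^n$ converges).
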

 	\begin{proof}
 		From section 2, we know that the bifurcation branch is locally of the form 
 		\begin{equation*}
 			u_1(s)=\frac{\mu}{\beta^2(s)\alpha\gamma-\mu^2}(\beta(s)\alpha-\mu)-\frac{\alpha}{\gamma}sf_j(r)+o(s), u_2(s)=\frac{\mu}{\beta^2(s)\alpha\gamma-\mu^2}(\beta(s)\gamma-\mu)+sf_j(r)+o(s).
 		\end{equation*}
 		Thus $u_i(s)$ is positive for $s$ small. Furthermore, going back to the global bifurcation branch,  as $s$ moves away from $0$,  if for instance, $u_1(s_0)$ firstly touches $0$ for some $s_0$, then by the maximum principle, $u_1(s_0)\equiv 0$, which means $u_2(s_0)\equiv 0$ or $u_2(s_0)\equiv 1$, a contradiction by Lemma \ref{isolated}.
 	\end{proof}

Set $(w_1, w_2):=(u_1-a_\beta, u_2-b_{\beta})$, then $(w_1,w_2)$ solves
 \begin{equation}\label{w-system2-comp}
	\begin{cases}
		-\Delta w_1= (w_1+a_\beta)(-\mu w_1-\beta \alpha w_2),& \text{in}\ B_1,\\
		-\Delta w_2=(w_2+b_\beta)(-\mu w_2-\beta \gamma w_1),& \text{in}\ B_1,\\
		\frac{\partial w_1}{\partial n}=	\frac{\partial w_2}{\partial n}	=0,&\text{on}\ \partial B_1,
	\end{cases}
\end{equation}
 where $\mu, \beta>0$, $\alpha>\gamma>0$ and $\beta>\frac{\sigma}{\gamma}$. There are two trivial solutions $(0,0)$ and $(-a_{\beta},-b_{\beta})$.

 A solution of \eqref{w-system2-comp} is said to be locked if $w_{1}/w_{2}$ is constant. We first describe the set of locked solutions.

\begin{lemma}\label{w-eq-2}
		Let $w(x)$ be the solution to 
	\begin{equation}\label{w-equation}
		-\Delta w=-\mu w-w^{2}\ \text{in}\ B_1,\quad 	\frac{\partial w}{\partial n}=0 \  \text{on}\ \partial B_1.
	\end{equation}
	Then all the locked solution of \eqref{w-system2-comp} is of the form  $w_1=\frac{\beta\alpha-\mu}{\beta^2\alpha\gamma-\mu^2}w(x),  w_2=\frac{\beta\gamma-\mu}{\beta^2\alpha\gamma-\mu^2}w(x)$.
\end{lemma}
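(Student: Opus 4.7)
The plan is to impose the locked ansatz $w_1 = kw_2$ for a constant $k$, extract two compatibility conditions by substituting into \eqref{w-system2-comp}, and then reduce the surviving scalar equation to \eqref{w-equation} by a single explicit rescaling.

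First, I would substitute $w_1 = kw_2$ into both equations of \eqref{w-system2-comp}, eliminate $\Delta w_2$ between them, and (dividing by $w_2$, treating $w_2\equiv 0$ separately) obtain the polynomial identity
\begin{equation*}
k(w_2 + b_\beta)(\mu + \beta\gamma k) \;=\; (kw_2 + a_\beta)(\mu k + \beta\alpha).
\end{equation*}
Matching the coefficients of $w_2$ (and assuming $k\neq 0$) gives $\mu + \beta\gamma k = \mu k + \beta\alpha$, so
\begin{equation*}
k \;=\; \frac{\beta\alpha - \mu}{\beta\gamma - \mu}.
\end{equation*}
Matching the constant terms reduces to $a_\beta = k b_\beta$, and the formula for $(a_\beta,b_\beta)$ from Theorem \ref{bifurcation-NBC}(i) specialized to $\sigma = \mu$ gives exactly $a_\beta/b_\beta = (\beta\alpha-\mu)/(\beta\gamma-\mu) = k$, so this second matching is automatic. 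This is the step where the hypothesis $\sigma=\mu$ is crucial: without this symmetry the locked ansatz would be overdetermined and would force the trivial solution.

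Second, the scalar equation left over from the second line of \eqref{w-system2-comp} is $-\Delta w_2 = -(\mu + \beta\gamma k)(w_2 + b_\beta)w_2$. A direct computation gives
\begin{equation*}
\mu + \beta\gamma k \;=\; \frac{\beta^2\alpha\gamma - \mu^2}{\beta\gamma - \mu},
\end{equation*}
and rescaling $w_2 = c\tilde{w}$ with $c = (\beta\gamma-\mu)/(\beta^2\alpha\gamma-\mu^2)$, chosen so that simultaneously $b_\beta = \mu c$ and $(\mu+\beta\gamma k)\,c = 1$, collapses the equation to $-\Delta \tilde{w} = -\mu\tilde{w} - \tilde{w}^2$, which is precisely \eqref{w-equation}. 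Hence $\tilde{w} = w$, and back-substitution yields $w_2 = \frac{\beta\gamma-\mu}{\beta^2\alpha\gamma-\mu^2}\,w$ together with $w_1 = k w_2 = \frac{\beta\alpha-\mu}{\beta^2\alpha\gamma-\mu^2}\,w$, as claimed.

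Finally, I would dispose of the degenerate case $k=0$: then $w_1 \equiv 0$ and the first equation of \eqref{w-system2-comp} reduces to $-a_\beta\beta\alpha w_2 = 0$, forcing $w_2 \equiv 0$; this trivial locked solution corresponds to $w \equiv 0$ in the stated formula. The only real subtlety is the constant-term compatibility, which balances cleanly precisely because $\sigma=\mu$ makes $a_\beta/b_\beta$ coincide with the value of $k$ dictated by the quadratic-term matching; that is the step I would emphasize most carefully.
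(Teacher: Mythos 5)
Your proof is correct and is essentially the paper's own argument: the paper disposes of this lemma with the single line ``the statement is true by direct calculations,'' and your substitution $w_1=kw_2$, the coefficient matching whose compatibility is exactly the hypothesis $\sigma=\mu$ (your observation that $a_\beta/b_\beta$ then coincides with the $k$ forced by the linear-term matching is the right thing to emphasize), and the rescaling $w_2=\frac{\beta\gamma-\mu}{\beta^{2}\alpha\gamma-\mu^{2}}\,w$ are precisely those calculations, carried out in more detail than the paper provides. One caveat worth recording: matching coefficients in the identity $(kw_2+a_\beta)(\mu k+\beta\alpha)=k(w_2+b_\beta)(\mu+\beta\gamma k)$ presupposes that $w_2$ is non-constant (so that its range contains a nondegenerate interval); when $w_2$ is a nonzero constant the pointwise identity constrains the affine factor only at a single value, and in fact the constant locked solutions $(w_1,w_2)=(1-a_\beta,-b_\beta)$ and $(-a_\beta,1-b_\beta)$ (i.e.\ $(u_1,u_2)=(1,0)$ and $(0,1)$) are \emph{not} of the stated form --- a degeneracy that the lemma's literal statement itself glosses over, which your $k=0$ discussion does not cover, but which is harmless where the lemma is used (Theorem 4.1), since there the ratio is already fixed at $\frac{\beta\alpha-\mu}{\beta\gamma-\mu}$ and positivity of the solutions on the branch (Lemma 4.2 of the paper's numbering, \emph{positive-branch}) excludes these constants.
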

\begin{proof}
	The statement is true by direct calculations.
\end{proof}
\begin{remark}\label{w-equation2}
	By  \cite[Maximum Principle 4.1]{amann-1976}, it holds that $w\equiv 0$ or $w\equiv -\mu$ or $w<0$ in $\overline{B}_1$. In particular, by integrating on both sides of \eqref{w-equation}, we have
	\begin{equation*}
		0=\int_{B_1}w(w+\mu)\mathrm{d}x,
	\end{equation*}
	thus if $w$ is not a constant solution, then $w+\mu$ changes sign in $B_1$.
\end{remark}

\begin{lemma}
	Assume $(w_1,w_2)$ is a radial solution  to \eqref{w-system2-comp}, and set $v=(\beta\gamma-\mu)w_1-(\beta\alpha-\mu)w_2$. Then either $v\equiv 0$ or   $v(r)$ has only simple roots in $(0,1)$, where $r=|x|$.
\end{lemma}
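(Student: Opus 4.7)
The plan is to show that $v$ satisfies a linear second-order elliptic equation of the form $-\Delta v + c(r)\,v = 0$ in $B_{1}$ with a bounded radial coefficient $c(r)$, and then conclude via the uniqueness theorem for ODE initial value problems. The key (and only non-routine) step is deriving this linear identity, which relies strongly on $\sigma=\mu$.

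\textbf{Step 1: Deriving the linear equation.} Set $A:=\beta\gamma-\mu$, $B:=\beta\alpha-\mu$, and $D:=\beta^{2}\alpha\gamma-\mu^{2}$, so that $v=Aw_{1}-Bw_{2}$. From \eqref{w-system2-comp} I would form the combination $A\cdot(\text{eq}_{1})-B\cdot(\text{eq}_{2})$ and expand, using $u_{i}=w_{i}+x_{i}$ with $x_{1}=a_{\beta}$, $x_{2}=b_{\beta}$. The crucial algebraic identities in the $\sigma=\mu$ case are
\begin{equation*}
B\mu+A\beta\alpha \;=\; A\mu+B\beta\gamma \;=\; D,\qquad Da_{\beta}=B\mu,\qquad Db_{\beta}=A\mu,\qquad 1-a_{\beta}-b_{\beta}=\tfrac{AB}{D}.
\end{equation*}
A direct computation (either by substituting $w_{2}=(Aw_{1}-v)/B$ and collecting coefficients, or by expanding $pw_{1}+qw_{2}$ with $p=-A\mu u_{1}+B\beta\gamma u_{2}$, $q=-A\beta\alpha u_{1}+B\mu u_{2}$ and comparing with $-\mu(u_{1}+u_{2}-1)v$) yields the identity
\begin{equation*}
-\Delta v + \mu\bigl(u_{1}+u_{2}-1\bigr)\,v \;=\; 0 \quad\text{in }B_{1},\qquad \tfrac{\partial v}{\partial n}=0 \quad\text{on }\partial B_{1}.
\end{equation*}
A useful internal check is the linearization at the constant solution $w_{1}=w_{2}=0$: the coefficient reduces to $-\mu AB/D$, which coincides with $\delta_{1}(\beta)$ of \eqref{eigen1} (one verifies that $-AB\mu/D$ is indeed a root of $X^{2}-\mu(a_{\beta}+b_{\beta})X-Da_{\beta}b_{\beta}=0$), and this matches the bifurcation eigenvalue identity $-\delta_{1}(\beta_{j})=\lambda_{j}$.

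\textbf{Step 2: ODE reduction and unique continuation.} Since $v$ is radial and $u_{1},u_{2}\in C^{2,\kappa}(\overline{B_{1}})$, writing $v=v(r)$ the identity of Step 1 becomes the linear second-order ODE
\begin{equation*}
v''(r)+\tfrac{N-1}{r}\,v'(r) \;=\; \mu\bigl(u_{1}(r)+u_{2}(r)-1\bigr)\,v(r),\qquad r\in(0,1),
\end{equation*}
whose coefficients are continuous on $(0,1)$ and, in particular, non-singular at every interior point $r_{0}\in(0,1)$. Now suppose $v\not\equiv0$ but $v$ has a non-simple zero at some $r_{0}\in(0,1)$, i.e.\ $v(r_{0})=v'(r_{0})=0$. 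By the standard uniqueness theorem for linear ODEs with continuous coefficients at a regular point, the only solution with zero Cauchy data at $r_{0}$ is the zero solution, so $v\equiv 0$ on $(0,1)$, contradicting our assumption.

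\textbf{Expected obstacle.} The only real work is the bookkeeping in Step 1; because the coefficient of $v$ must be the same whether one substitutes $w_{2}$ out or $w_{1}$ out, one has to take care not to equate coefficients of $w_{1},w_{2}$ treating $u_{1},u_{2}$ as independent of them (a mistake that produces apparent contradictions). Once the identity $-\Delta v+\mu(u_{1}+u_{2}-1)v=0$ is in hand, Step 2 is a textbook application of ODE uniqueness. The hypothesis $\sigma=\mu$ is essential for the identities $B\mu+A\beta\alpha=A\mu+B\beta\gamma$; without it, no such clean linear equation for $v$ is available.
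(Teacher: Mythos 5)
Your proposal is correct and takes essentially the same route as the paper: the identity $-\Delta v+\mu(u_1+u_2-1)v=0$ you derive is exactly the paper's equation \eqref{v-eq}, since $1-a_\beta-b_\beta=\tfrac{(\beta\alpha-\mu)(\beta\gamma-\mu)}{\beta^2\alpha\gamma-\mu^2}$, and both arguments then pass to the radial ODE and conclude simplicity of interior zeros from linear ODE uniqueness (the paper cites Hartman for the non-clustering/simplicity of zeros, and invokes the same double-zero uniqueness argument explicitly in Theorem \ref{unbdd-branch}). Incidentally, your computation also confirms that the denominator displayed in \eqref{v-eq} contains a typo and should read $\beta^2\alpha\gamma-\mu^2$ rather than $\beta^2\alpha\gamma-\mu$.
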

\begin{proof}
	For $v=(\beta\gamma-\mu)w_1-(\beta\alpha-\mu)w_2$,  it holds that
	\begin{equation}\label{v-eq}
		\begin{cases}
			-\Delta v=\mu\Big[\frac{(\beta\alpha-\mu)(\beta\gamma-\mu)}{\beta^2\alpha\gamma-\mu}-(w_1+w_2)\Big]v,& \text{in}\ B_1,\\
			\frac{\partial v}{\partial n}	=0,&\text{on}\ \partial B_1.
		\end{cases}
	\end{equation}
	Furthermore, since $v(x)=v(r)$, where $r=|x|$, it follows from \eqref{v-eq} that
	\begin{equation*}
		\begin{cases}
			-[r^{N-1}v'(r)]'=\mu r^{N-1}\Big[\frac{(\beta\alpha-\mu)(\beta\gamma-\mu)}{\beta^2\alpha\gamma-\mu}-w_1(r)-w_2(r)\Big]v(r),& \text{for}\ r\in(0,1),\\
			v'(0)=v'(1)=0.
		\end{cases}
	\end{equation*}
	Therefore the zeros of $v(r)$ cannot have a cluster point in $(0,1)$ by \cite[p326]{Hartman-2002} if $v(r)\not\equiv 0$.	
\end{proof}

Define 
\begin{equation*}
	S_j=\{v\in C^1(0,1)|  v  \mbox{ has exactly } j\mbox{ simple roots in } (0,1), v(0)\neq 0, v(1)\neq 0\}.
\end{equation*}
Then  $S_j$ is an open set in $C^1(0,1)$ and $S_i\cap S_j=\emptyset$ for $i\neq j$.

\begin{lemma}\label{local-sj}
	There exists a neighborhood $O_j$ of $\left((0,0),\beta_j\right)$ such that if $\left((w_1,w_2),\beta\right)\in O_j\setminus\{(0,0)\times \mathbb{R}\}$ is a solution to \eqref{w-system2-comp}, then $v=(\beta\gamma-\mu)w_1-(\beta\alpha-\mu)w_2\in S_j$.
\end{lemma}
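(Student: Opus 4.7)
The plan is to invoke the precise local description of the bifurcation curve obtained in the proof of Theorem~\ref{bifurcation-NBC}(ii), and to read off what it says about the combination $v$. First, I would note that the change of variables $(w_1,w_2)=(u_1-a_\beta,u_2-b_\beta)$ converts \eqref{w-system2-comp} into exactly the equation $\tilde F(\mathbf{w},\beta)=0$ studied in that proof. The Crandall--Rabinowitz theorem applied there then supplies a neighborhood $O_j$ of $((0,0),\beta_j)$ in $C_r^{2,\kappa}(\overline{B_1},\mathbb{R}^2)\times\mathbb{R}$ in which every nontrivial solution lies on the smooth curve
\begin{equation*}
\mathbf{w}(s)=s\mathbf{h}_0+\psi(s\mathbf{h}_0,\beta(s)),\qquad \beta(0)=\beta_j,\qquad \mathbf{h}_0=f_j(r)\begin{pmatrix}m(\beta_j)\\ 1\end{pmatrix},
\end{equation*}
with $\psi(s\mathbf{h}_0,\beta(s))=o(s)$ in the $C^{2,\kappa}$ norm.

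Next, I would expand componentwise and use $\sigma=\mu$ to obtain
\begin{equation*}
\frac{v(s,r)}{s}=(\beta(s)\gamma-\mu)\,\frac{w_1(s,r)}{s}-(\beta(s)\alpha-\mu)\,\frac{w_2(s,r)}{s}\longrightarrow \kappa_0\, f_j(r)\quad\text{in }C^{2,\kappa},
\end{equation*}
as $s\to 0$, where $\kappa_0:=(\beta_j\gamma-\mu)m(\beta_j)-(\beta_j\alpha-\mu)$. Since $m(\beta_j)<0$, $\beta_j\gamma>\mu$ (by the standing hypothesis $\beta_j>\mu/\gamma$), and $\beta_j\alpha>\mu$ (because $\alpha>\gamma$), both summands in $\kappa_0$ are negative, so $\kappa_0\neq 0$. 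In particular $v(s,\cdot)/s$ converges in $C^1$ to the nonzero multiple $\kappa_0 f_j$.

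Finally, I would verify that $\kappa_0 f_j\in S_j$ and that $S_j$ is open in $C^1(0,1)$ and invariant under nonzero scaling; combined with the previous step, this yields $v(s,\cdot)\in S_j$ for all sufficiently small $s\neq 0$, and shrinking $O_j$ concludes the proof. The function $f_j$ has exactly $j$ simple roots in $(0,1)$ by the eigenfunction property recalled in the introduction, and $f_j(0),f_j(1)\neq 0$, for otherwise the Neumann condition $f_j'=0$ at that endpoint together with ODE uniqueness would force $f_j\equiv 0$. For openness of $S_j$ one argues that at each simple zero $r_\ell$ of $v$ the derivative $v'(r_\ell)$ is nonzero, so every $C^1$-small perturbation retains exactly one simple zero near $r_\ell$; away from these finitely many zeros and from the endpoints $\{0,1\}$, $|v|$ is bounded below by a positive constant, preventing new zeros from appearing.

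There is no serious analytic obstacle. The mildly delicate step is tracking the remainder $\psi$ in the $C^{2,\kappa}\hookrightarrow C^1$ topology so that the leading profile $\kappa_0 f_j$ genuinely dictates the zero set of $v$ on a suitably small $O_j$; everything else reduces to the algebraic nonvanishing of $\kappa_0$ and a standard continuity/IVT argument for simple zeros of a scalar function on $(0,1)$.
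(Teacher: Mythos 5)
Your proposal is correct and follows essentially the same route as the paper: the paper likewise plugs the Crandall--Rabinowitz expansion $w_1(s)=-\frac{\alpha}{\gamma}sf_j(r)+o(s)$, $w_2(s)=sf_j(r)+o(s)$ (note $m(\beta_j)=-\alpha/\gamma$ when $\sigma=\mu$) into $v$, obtaining $v(s)=\frac{\mu(\alpha+\gamma)-2\beta(s)\alpha\gamma}{\gamma}\,s f_j(r)+o(|s|)$ with nonvanishing leading coefficient, and concludes from $f_j$ having exactly $j$ simple roots; your sign argument $\kappa_0=(\beta_j\gamma-\mu)m(\beta_j)-(\beta_j\alpha-\mu)<0$ is the same fact in disguise. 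Your added verifications (openness and scaling invariance of $S_j$, nonvanishing of $f_j$ at the endpoints, tracking the remainder in $C^1$) are details the paper leaves implicit, not a different method.
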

\begin{proof}
From the analysis in section 2, in a small neighbourhood of $\left((0,0),\beta_j\right)$, the solution $(w_1, w_2)$ is of the form
\begin{equation*}
	w_1(s)=-\frac{\alpha}{\gamma}sf_j(r)+o(s),\quad w_2(s)=sf_j(r)+o(|s|),\quad s\in(-\delta,\delta).
\end{equation*}
Thus 
\begin{equation*}
	v(s)=(\beta(s)\gamma-\mu)w_1-(\beta(s)\alpha-\mu)w_2=\frac{\mu(\alpha+\gamma)-2\beta(s)\alpha}{\gamma}sf_j(r)+o(|s|).
\end{equation*}
Notice that $f_j(r)$ has exactly $j$ simple roots in $(0,1)$, therefore the conclusion holds.
\end{proof}
 
\begin{theorem}\label{unbdd-branch}
	For any $\left((w_1,w_2),\beta\right)\notin \{(0,0)\times\mathbb{R}\}$ lying on the branch of nontrivial solutions $\mathcal{C}_j$ passing through $\left((0,0),\beta_j\right)$,  it holds that $v=(\beta\gamma-\mu)w_1-(\beta\alpha-\mu)w_2\in S_j$. Consequently, the bifurcation branch is unbounded.
\end{theorem}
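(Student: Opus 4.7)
The plan is to combine the dichotomy in Theorem \ref{global} with a nodal-preservation argument for $v=(\beta\gamma-\mu)w_1-(\beta\alpha-\mu)w_2$. Since $S_j\cap S_m=\emptyset$ for $m\neq j$, once $v\in S_j$ is established on all of $\mathcal{C}_j\setminus(\{(0,0)\}\times\mathbb{R})$, a return to another bifurcation point $((0,0),\beta_m)$ would be ruled out by Lemma \ref{local-sj} applied at $\beta_m$, and the global alternative then forces $\mathcal{C}_j$ to be unbounded.

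First I would set $T_j:=\{((w_1,w_2),\beta)\in\mathcal{C}_j\setminus(\{(0,0)\}\times\mathbb{R}):v\in S_j\}$. By Lemma \ref{local-sj}, $T_j$ contains a punctured neighborhood of $((0,0),\beta_j)$ in $\mathcal{C}_j$, hence is nonempty. Openness of $T_j$ in $\mathcal{C}_j\setminus(\{(0,0)\}\times\mathbb{R})$ follows from openness of $S_j$ in $C^1([0,1])$ together with continuity of $((w_1,w_2),\beta)\mapsto v$, which in turn is guaranteed by Schauder regularity for the linear equation \eqref{v-eq} with bounded coefficients (bounds being provided by Lemma \ref{upperbound-ui}).

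The main work is closedness of $T_j$. Suppose $((w_1^n,w_2^n),\beta^n)\in T_j$ converges to $((w_1,w_2),\beta)$ with $(w_1,w_2)\neq(0,0)$, and let $v^n\to v$ in $C^1$. If $v\not\equiv 0$, then $v$ is a nontrivial radial solution of $-(r^{N-1}v')'=r^{N-1}c(r)v$ on $(0,1)$ with $v'(0)=v'(1)=0$; ODE uniqueness (cf.\ \cite[p.~326]{Hartman-2002}) forces every zero of $v$ to be simple, and a zero at $r=0$ or $r=1$ would combine with the Neumann-enforced vanishing derivative there to give $v\equiv 0$. A standard convergence-of-zeros count, using that $v^n$ has exactly $j$ simple interior zeros and openness of $S_j$, then yields $v\in S_j$. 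The main obstacle is to rule out the locked limit $v\equiv 0$: in that case Lemma \ref{w-eq-2} would force $(w_1,w_2)=(c_1(\beta)w,c_2(\beta)w)$ with $w$ a nontrivial solution of the scalar equation \eqref{w-equation}. I plan to exclude this by normalizing $\tilde v^n:=v^n/\|v^n\|_{C^1}$, which satisfies a linear equation with coefficients converging uniformly to those of the limit; elliptic compactness produces a subsequential limit $\tilde v$ with $\|\tilde v\|_{C^1}=1$, so $\tilde v\not\equiv 0$, and the previous ODE argument forces $\tilde v\in S_j$. The non-resonance hypothesis $\mu\notin\sigma(-\Delta_r)$, applied to the linearization about the locked limit, will then be used to force $\tilde v\equiv 0$, producing the contradiction.

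Granted the above, connectedness of $\mathcal{C}_j\setminus(\{(0,0)\}\times\mathbb{R})$ gives $T_j=\mathcal{C}_j\setminus(\{(0,0)\}\times\mathbb{R})$, which is the first assertion. For unboundedness, suppose $((0,0),\beta_m)\in\mathcal{C}_j$ for some $m\neq j$: then Lemma \ref{local-sj} at $\beta_m$ produces points of $\mathcal{C}_j$ arbitrarily close to $((0,0),\beta_m)$ with $v\in S_m$, contradicting $S_j\cap S_m=\emptyset$. By the alternative in Theorem \ref{global}, $\mathcal{C}_j$ must therefore be unbounded.
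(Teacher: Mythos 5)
Your overall architecture matches the paper's: Lemma \ref{local-sj} seeds the nodal class, openness of $S_j$ together with the double-zero/ODE-uniqueness argument reduces any loss of the class to a locked solution via Lemma \ref{w-eq-2}, and your final step (disjointness $S_j\cap S_m=\emptyset$, Lemma \ref{local-sj} applied at $\beta_m$, and the alternative from Theorem \ref{global}) is exactly the paper's route to unboundedness. The genuine gap is in how you exclude a \emph{nontrivial} locked limit, which is the crux of the closedness step. After your normalization, $\tilde v$ solves $-\Delta \tilde v=\mu\bigl[\kappa_\beta-(c_1+c_2)w(x)\bigr]\tilde v$ with Neumann conditions, where $\kappa_\beta=\frac{(\beta\alpha-\mu)(\beta\gamma-\mu)}{\beta^{2}\alpha\gamma-\mu^{2}}$ and $(w_1,w_2)=(c_1w,c_2w)$ is the locked limit from Lemma \ref{w-eq-2}. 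The hypothesis $\mu\notin\sigma(-\Delta_r)$ concerns the \emph{constant} potential $\mu$; it says nothing about the kernel of $-\Delta-\mu[\kappa_\beta-(c_1+c_2)w(x)]$ when $w$ is nonconstant, so the claim that non-resonance forces $\tilde v\equiv 0$ is unjustified precisely in the case you must handle. (It does work in the single constant case $w\equiv-\mu$: there $\kappa_\beta+\mu(c_1+c_2)=1$, the equation becomes $-\Delta\tilde v=\mu\tilde v$, and non-resonance applies; but that case corresponds to $(u_1,u_2)\equiv(0,0)$ and is not the problematic one.)

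What you are missing is the paper's positivity input, which eliminates nontrivial locked limits outright and makes the blow-up normalization unnecessary: by Lemma \ref{positive-branch} every solution on $\mathcal{C}_j$ satisfies $u_i>0$, while by Remark \ref{w-equation2} a nonconstant solution $w$ of \eqref{w-equation} obeys $\int_{B_1}w(w+\mu)\,\mathrm{d}x=0$ and hence dips below $-\mu$; since for a locked solution $u_1=a_\beta+c_1w=c_1(\mu+w)$ (note $a_\beta=c_1\mu$ when $\sigma=\mu$), $w<-\mu$ somewhere would force $u_1<0$ somewhere, and $w\equiv-\mu$ would force $u\equiv0$ --- both impossible. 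Thus the only locked limit compatible with the branch is $(w_1,w_2)=(0,0)$, i.e.\ the branch meets the trivial line at some bifurcation point $\beta_m$, and then your final paragraph (identical to the paper's) yields $m=j$ is impossible to exit through and $m\neq j$ contradicts $S_j\cap S_m=\emptyset$, giving unboundedness. A secondary caveat: $\mathcal{C}_j\setminus(\{(0,0)\}\times\mathbb{R})$ need not be connected (locally the Crandall--Rabinowitz curve splits into the pieces $s>0$ and $s<0$), so the open/closed argument should be run on $\mathcal{C}_j$ itself, with the trivial-line points handled by Lemma \ref{local-sj} as above; but this is repairable, whereas the locked-limit exclusion requires Lemma \ref{positive-branch} and Remark \ref{w-equation2} (or some substitute controlling the kernel of the nonconstant-potential linearization), and as written your proof does not close.
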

\begin{proof}
	By Lemma \ref{local-sj}, the conclusion holds locally. We shall prove it globally. If not, there exists nontrivial solution $\left((w_1,w_2),\beta\right)$ lying on $\mathcal{C}_j$, and $v\in \partial S_j$. Then $v$ has at least a double zero, i.e., there exists $t\in[0,1]$ such that $v(t)=v'(t)=0$, it follows from the uniqueness of ODE that $v\equiv0$. Thus $(\beta\gamma-\mu)w_1=(\beta\alpha-\mu)w_2$, i.e., $(w_1,w_2)$ is a locked solution. By Lemma \ref{positive-branch}, Lemma \ref{w-eq-2} and Remark \ref{w-equation2}, we obtain $w_1(x)\equiv w_2(x)\equiv 0$. Therefore,  $\beta=\beta_m$ and $m\neq j$, while which contradicts with Lemma \ref{local-sj}. Hence the conclusion holds globally.
\end{proof}

\subsection{Proof of Theorem \ref{limit-sys}(ii)}
In light of Lemma \ref{upperbound-ui}, following the arguments in \cite{CTV-2005} and \cite{NTTV-2010}, we know that $u_{i,\beta}$ has a uniform $C^{0,\kappa}$ bound for any $\kappa\in(0,1)$.
In view of Lemma \ref{positive-branch} and Theorem \ref{unbdd-branch}, we know that on each bifurcation branch, $\beta(s)$ is unbounded and goes to infinity. 
Thus for $\beta$ sufficiently large, there are at least $k$ positive solutions.

\begin{proof}[Proof of Theorem \ref{limit-sys}(ii)]
Recall that
	\begin{equation*}
	\begin{cases}
		-\Delta u_{1,\beta}=\mu u_{1,\beta}(1-u_{1,\beta})-\beta \alpha u_{1,\beta}u_{2,\beta},& \text{in}\ B_1,\\
		-\Delta u_{2,\beta}=\mu u_{2,\beta}(1-u_{2,\beta})-\beta \gamma u_{1,\beta}u_{2,\beta},& \text{in}\ B_1,\\
		\frac{\partial u_{1,\beta}}{\partial n}=	\frac{\partial u_{2,\beta}}{\partial n}	=0,&\text{on}\ \partial B_1.
	\end{cases}
\end{equation*}
\textbf{Claim.} There exists a positive constant $C$, independent of $\beta$, such that 
\begin{equation*}
	\|u_{i,\beta}\|_{H^1(B_1)}+\|u_{i,\beta}\|_{L^{\infty}(B_1)}\le C,\quad i=1,2.
\end{equation*}
In fact, by Lemma \ref{upperbound-ui}, it holds that $\|u_{i,\beta}\|_{L^{\infty}(B_1)}\le 1$, $i=1,2$.
Furthermore,
 by choosing $u_{i,\beta}$ as the test function for the equation of $u_{i,\beta}$, one has
\begin{equation}\label{int}
	\begin{aligned}
	\int_{B_1}|\nabla u_{1,\beta}|^2\mathrm{d}x		&=	\int_{B_1} \mu u^2_{1,\beta}(1-u_{1,\beta})\mathrm{d}x-\int_{B_1}\beta \alpha u^2_{1,\beta}u_{2,\beta}\mathrm{d}x<\int_{B_1} \mu u^2_{1,\beta}(1-u_{1,\beta})\mathrm{d}x<\mu|B_1|,\\
	\int_{B_1}|\nabla u_{2,\beta}|^2\mathrm{d}x	&=		\int_{B_1}\mu u^2_{2,\beta}(1-u_{2,\beta})\mathrm{d}x-	\int_{B_1}\beta \gamma u_{1,\beta}u^2_{2,\beta}<\int_{B_1}\mu u^2_{2,\beta}(1-u_{2,\beta})\mathrm{d}x<\mu|B_1|.
	\end{aligned}
\end{equation}
Hence the claim holds.  Similar to the arguments in \cite[Theorem 2.2]{CTV-2005}, there exists $u_{i}\in H^1(B_1)$ such that $u_{i,\beta}\rightharpoonup u_i$ in $H^1(B_1)$ weakly, $i=1, 2$. Thus $u_{i,\beta}\rightarrow u_i$ a.e. in $B_1$ and  $\|u_i\|_{L^{\infty}(B_1)}\le 1$ holds by the Sobolev embedding and the claim. Moreover, from the above integration, it is easy to see that
\begin{equation*}
	\int_{B_1}u^2_{1,\beta}u_{2,\beta}\mathrm{d}x\rightarrow0, \quad \text{as}\ \beta\rightarrow\infty,
\end{equation*}
which implies that $u_{1,\beta}\cdot u_{2,\beta}\rightarrow 0$ a.e. in $B_1$ and accordingly $u_i\cdot u_j\rightarrow 0$ a.e. in $B_1$.

For $w_\beta=\gamma u_{1,\beta}-\alpha u_{2,\beta}$,  it holds that
\begin{equation*}
	\begin{cases}
			-\Delta w_\beta=\mu \left(w_\beta-\gamma u^2_{1,\beta}+\alpha u^2_{2,\beta}\right),& \text{in}\ B_1,
	\\
		\frac{\partial w_\beta}{\partial n}	=0,&\text{on}\ \partial B_1.
	\end{cases}
\end{equation*}
As $\beta$ goes to infinity, we see that $w_\beta\rightharpoonup w:= \gamma u_1-\alpha u_2$ weakly in $H^1$ and $w=\gamma u_1$ on $\{u_1>0\}$. Therefore $-\Delta w=-\Delta(\gamma u_1)$ on $\{u_1>0\}$. In fact, 
\begin{equation*}
		\begin{cases}
		-\Delta w=\mu \left(w-\gamma u^2_{1}+\alpha u^2_{2}\right),& \text{in}\ B_1,
		\\
		\frac{\partial w}{\partial n}	=0,&\text{on}\ \partial B_1.
	\end{cases}
\end{equation*}

In addition, to get the strong convergence,  by testing the equation of $w$ by $u_1$, we get
\begin{equation*}
	\int_{B_1}|\nabla u_1|^2\mathrm{d}x=\int_{B_1}\mu (1-u_1)u_1^2\mathrm{d}x.
\end{equation*}
In light of \eqref{int}, we see that 
\begin{equation*}
		\int_{B_1}|\nabla u_1|^2\mathrm{d}x\ge \limsup 	\int_{B_1}|\nabla u_{1,\beta}|^2\mathrm{d}x,
\end{equation*}
thus  $u_{1,\beta}\rightarrow u_1$ strongly in $H^1(B_1)$. Similarly,  $u_{2,\beta}\rightarrow u_2$ strongly in $H^1(B_1)$. Moreover, since $u_{i,\beta}\rightarrow u_{i}$ in $C_{r}^{0,\kappa}$, by the elliptic regularity theory, we see that $w_\beta\rightarrow w$ in $C_{r}^{2,\kappa}$.

Furthermore, notice that on the $j$-th global bifurcation branch, from  Theorem \ref{unbdd-branch}, $v_{\beta}=(\beta\gamma-\mu)u_{1,\beta}-(\beta\alpha-\mu)u_{2,\beta}$ has exactly $j$ simple roots in $(0,1)$. Consequently, 
\begin{equation*}
\frac{1}{\beta}	v_{\beta}=w_{\beta}-\frac{\mu}{\beta}u_{1,\beta}+\frac{\mu}{\beta}u_{2,\beta}\rightarrow w,
\end{equation*}
thus $w$ has exactly $j$ simple roots.
\end{proof}

\section{Appendix}
We give some notations below.
\begin{equation*}
	\begin{aligned}
			f(\beta)&:=4\beta^2\alpha\gamma(\beta\alpha-\mu)(\beta\gamma-\sigma)+\mu\sigma[\beta(\alpha-\gamma)-(\mu-\sigma)]^2
						\\
			&
			\ =4\beta^4\alpha^2\gamma^2-4\beta^3\alpha\gamma(\alpha\sigma+\gamma\mu)+\mu\sigma\big[\beta^2(\alpha+\gamma)^2-2\beta(\alpha-\gamma)(\mu-\sigma)+(\mu-\sigma)^2\big].
			\\
		h(\beta)&:=f'(\beta)(\beta^2\alpha\gamma-\mu\sigma)-4\beta\alpha\gamma{f(\beta)}.	
	\end{aligned}
\end{equation*}
\subsection{Monotonicity of  $-\delta_{1}(\beta)$ in local bifurcation analysis}
\begin{prop}\label{mono-beta}
	Assume $\sigma\ge \mu$, then $-\delta_{1}(\beta)$ defined in \eqref{eigen1}  is increasing with respect to $\beta \in (\frac{\sigma}{\gamma},+\infty)$.
\end{prop}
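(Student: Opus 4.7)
The plan is to deduce the monotonicity from an implicit differentiation of the characteristic equation of $A(\beta)$. Set $y(\beta):=-\delta_1(\beta)$. Because $\det A(\beta)=a_\beta b_\beta(\mu\sigma-\beta^2\alpha\gamma)<0$ for $\beta>\sigma/\gamma$ (which uses $\sigma\alpha>\gamma\mu$, a consequence of $\alpha>\gamma$ and $\sigma\ge\mu$), the two eigenvalues of $A(\beta)$ are real and of opposite sign, so $y(\beta)>0$. The characteristic relation $\delta_1^2-T\delta_1+\det A(\beta)=0$ becomes, with $T(\beta):=\mu a_\beta+\sigma b_\beta$ and $|D|(\beta):=-\det A(\beta)$,
\[
y^2+T(\beta)\,y-|D|(\beta)=0,\quad T(\beta)=\frac{\mu\sigma[\beta(\alpha+\gamma)-(\mu+\sigma)]}{\beta^2\alpha\gamma-\mu\sigma},\quad |D|(\beta)=\frac{\mu\sigma(\beta\alpha-\mu)(\beta\gamma-\sigma)}{\beta^2\alpha\gamma-\mu\sigma}.
\]
Differentiating implicitly,
\[
y'(\beta)=\frac{|D|'(\beta)-T'(\beta)\,y(\beta)}{2y(\beta)+T(\beta)},
\]
and the denominator is strictly positive since $y,T>0$. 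It therefore suffices to prove $|D|'(\beta)>0$ and $T'(\beta)<0$ on $(\sigma/\gamma,+\infty)$, for then the numerator is the sum of two strictly positive quantities.

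For the first inequality, a direct computation gives
\[
(\beta^2\alpha\gamma-\mu\sigma)^2\,|D|'(\beta)=\mu\sigma\Big[(\alpha\sigma+\gamma\mu)(\beta^2\alpha\gamma+\mu\sigma)-4\beta\alpha\gamma\mu\sigma\Big].
\]
Two applications of AM--GM, $\alpha\sigma+\gamma\mu\ge 2\sqrt{\alpha\sigma\,\gamma\mu}$ and $\beta^2\alpha\gamma+\mu\sigma\ge 2\beta\sqrt{\alpha\gamma\,\mu\sigma}$, yield that the product on the left of the bracket is at least $4\beta\alpha\gamma\mu\sigma$; equality in the first AM--GM forces $\alpha\sigma=\gamma\mu$, which is excluded by $\alpha>\gamma$ and $\sigma\ge\mu$, so the inequality is strict and $|D|'(\beta)>0$.

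For the second inequality, an analogous computation yields
\[
T'(\beta)=\frac{-\mu\sigma\,q(\beta)}{(\beta^2\alpha\gamma-\mu\sigma)^2},\qquad q(\beta):=(\alpha+\gamma)(\beta^2\alpha\gamma+\mu\sigma)-2\beta\alpha\gamma(\mu+\sigma).
\]
I regard $q$ as a quadratic in $\beta$ with positive leading coefficient $(\alpha+\gamma)\alpha\gamma$; its vertex is at $\beta^{*}=(\mu+\sigma)/(\alpha+\gamma)$ and satisfies $\sigma/\gamma-\beta^{*}=(\sigma\alpha-\gamma\mu)/[\gamma(\alpha+\gamma)]>0$, while $q(\sigma/\gamma)=\sigma(\alpha-\gamma)(\sigma\alpha/\gamma-\mu)>0$. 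If the discriminant of $q$ is non-positive, then $q>0$ everywhere; otherwise, the two real roots of $q$ are symmetric about $\beta^{*}<\sigma/\gamma$, and the positivity of $q$ at $\sigma/\gamma$ forces both roots to lie strictly below $\sigma/\gamma$. Either way $q(\beta)>0$ for all $\beta>\sigma/\gamma$, hence $T'(\beta)<0$. Combining the two sign results yields $y'(\beta)>0$ on $(\sigma/\gamma,+\infty)$, as claimed.

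The main technical obstacle I anticipate is the sign analysis of $T'$: while the AM--GM proof for $|D|'>0$ is immediate once the algebra is laid out, the positivity of $q$ is not reducible to a single AM--GM and instead requires the complementary vertex/boundary information above. A useful sanity check is the asymptotics $-\delta_1(\sigma/\gamma^{+})=0$ and $-\delta_1(\beta)\to\sqrt{\mu\sigma}$ already established in the proof of Theorem \ref{bifurcation-NBC}(ii), both consistent with strict monotone increase from $0$ to $\sqrt{\mu\sigma}$.
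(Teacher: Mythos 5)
Your proof is correct, and it takes a genuinely different route from the paper. The paper differentiates the explicit closed-form expression for $-\delta_1(\beta)$ head-on: after clearing the square root it must show a quantity $g(\beta)>0$, which it reduces to the positivity of a quartic $h(\beta)=f'(\beta)(\beta^2\alpha\gamma-\mu\sigma)-4\beta\alpha\gamma f(\beta)$, and that positivity is then established by a fairly heavy cascade of computations of $h'$, $h''$, $h'''$ evaluated at the endpoint $\beta=\sigma/\gamma$. You instead never touch the square root: by Vieta's formulas you encode $y=-\delta_1$ via $y^2+T(\beta)y-|D|(\beta)=0$ with $T$ the trace and $|D|=-\det A(\beta)>0$, differentiate implicitly (legitimate here since the discriminant $T^2+4|D|>0$, and indeed $2y+T=\sqrt{T^2+4|D|}>0$, so the denominator is automatically nonzero even before invoking $T>0$), and decouple the problem into two scalar monotonicity statements: $|D|'>0$, settled by a two-fold AM--GM with strictness coming from $\alpha\sigma>\gamma\mu$, and $T'<0$, settled by locating the vertex of the quadratic $q$ to the left of $\sigma/\gamma$ together with $q(\sigma/\gamma)>0$. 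I verified your algebraic identities for $(\beta^2\alpha\gamma-\mu\sigma)^2|D|'$, for $q$, for $\sigma/\gamma-\beta^*$, and for $q(\sigma/\gamma)=\sigma(\alpha-\gamma)(\sigma\alpha/\gamma-\mu)$; all are exact. What your approach buys is a substantially shorter and more conceptual argument that makes transparent where each hypothesis enters (both sign conditions trace back to $\alpha\sigma>\gamma\mu$, i.e.\ to $\alpha>\gamma$ and $\sigma\ge\mu$), whereas the paper's brute-force differentiation buys explicit formulas for the derivative of $-\delta_1$ itself, which could in principle be reused for quantitative estimates; your method yields only the sign. Your sanity check against the limits $-\delta_1(\sigma/\gamma^+)=0$ and $-\delta_1\to\sqrt{\mu\sigma}$ is consistent with the paper's own computation in the proof of Theorem \ref{bifurcation-NBC}(ii).
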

\begin{proof}
We try to check the monotonicity of $-\delta_{1}(\beta)$ with respect to $\beta$. Notice that
\begin{small}
	\begin{equation*}
		\begin{aligned}
			- \delta_1(\beta)&=\frac{\sqrt{\mu\sigma}}{2(\beta^2\alpha\gamma-\mu\sigma)}\Big[-\sqrt{\mu\sigma}[\beta(\alpha+\gamma)-(\mu+\sigma)]+\sqrt{4\beta^2\alpha\gamma(\beta\alpha-\mu)(\beta\gamma-\sigma)+\mu\sigma[\beta(\alpha-\gamma)-(\mu-\sigma)]^2}\Big]
		\\
		&	=\frac{\sqrt{\mu\sigma}}{2}\frac{1}{\beta^2\alpha\gamma-\mu\sigma}\Big[-\beta \sqrt{\mu\sigma}(\alpha+\gamma)+\sqrt{\mu\sigma}(\mu+\sigma)+\sqrt{f(\beta)}\Big].
		\end{aligned}
	\end{equation*}
\end{small}
Therefore,
\begin{small}
	\begin{equation*}
		\begin{aligned}
			-	\frac{\mathrm{d}}{\mathrm{d}\beta}\delta_{1}(\beta)
			&=\frac{\sqrt{\mu\sigma}}{2}\frac{1}{2(\beta^2\alpha\gamma-\mu\sigma)^2\sqrt{f(\beta)}}g(\beta),
		\end{aligned}
	\end{equation*}
\end{small}
where 
\begin{equation*}
	g(\beta)=f'(\beta)(\beta^2\alpha\gamma-\mu\sigma)-4\beta\alpha\gamma{f(\beta)}+2\sqrt{f(\beta)} \sqrt{\mu\sigma}\big\{	(\beta^2\alpha\gamma+\mu\sigma) (\alpha+\gamma)-2\beta\alpha\gamma(\mu+\sigma)\big\}.
\end{equation*}
{It is sufficient to prove $g(\beta)>0$ for $\beta\in(\frac{\sigma}{\gamma},+\infty)$. } In fact,
\begin{equation*}
	\begin{aligned}
		g(\frac{\sigma}{\gamma})
		&=4\sigma\gamma^{-1}(\sigma\alpha-\gamma\mu)^2(\beta^2\alpha\gamma-\mu\sigma)>0.
	\end{aligned}
\end{equation*}
And
\begin{equation*}
\begin{aligned}
(\beta^2\alpha\gamma+\mu\sigma) (\alpha+\gamma)-2\beta\alpha\gamma(\mu+\sigma)
&\ge (\sigma^{2}\alpha\gamma^{-1}+\mu\sigma) (\alpha+\gamma)-2\sigma\alpha(\mu+\sigma)
\\
&\ge 
\frac{\sigma}{\gamma}(\sigma\alpha-\gamma\mu)(\alpha-\gamma)>0.
\end{aligned}
\end{equation*}
Therefore one only needs to prove $h(\beta)=f'(\beta)(\beta^2\alpha\gamma-\mu\sigma)-4\beta\alpha\gamma{f(\beta)}>0$. Indeed,
\begin{equation*}
	\begin{aligned}
		h(\beta)=&f'(\beta)(\beta^2\alpha\gamma-\mu\sigma)-4\beta\alpha\gamma{f(\beta)}
	\\
	=&\big\{16\beta^3\alpha^2\gamma^2-12\beta^2\alpha\gamma(\alpha\sigma+\gamma\mu)+2\mu\sigma\beta(\alpha+\gamma)^2-2\mu\sigma(\alpha-\gamma)(\mu-\sigma)\big\}(\beta^2\alpha\gamma-\mu\sigma)
		\\
		& -4\beta\alpha\gamma\big\{4\beta^4\alpha^2\gamma^2-4\beta^3\alpha\gamma(\alpha\sigma+\gamma\mu)+\mu\sigma\beta^2(\alpha+\gamma)^2-2\beta\mu\sigma(\alpha-\gamma)(\mu-\sigma)+\mu\sigma(\mu-\sigma)^2\big] \big\}
		\\
		=&4\beta^4\alpha^2\gamma^2(\alpha\sigma+\gamma\mu)-2\beta^3\alpha\gamma\mu\sigma(\alpha^2+\gamma^2+10\alpha\gamma)+6\beta^2\alpha\gamma\mu\sigma(\alpha+\gamma)(\mu+\sigma)\\
		&-2\beta\mu\sigma\big[\mu\sigma(\alpha+\gamma)^2+2\alpha\gamma(\mu-\sigma)^2\big]+2\mu^2\sigma^2(\alpha-\gamma)(\mu-\sigma).
	\end{aligned}
\end{equation*}
Note that 
\begin{equation*}
	\begin{aligned}
	h(\frac{\sigma}{\gamma})=	f'(\frac{\sigma}{\gamma})(\sigma^2\alpha\gamma^{-1}-\mu\sigma)-4\sigma\alpha{f(\frac{\sigma}{\gamma})}=2\sigma^2\gamma^{-2}(\sigma\alpha-\gamma\mu)^2(2\sigma\alpha-\mu\alpha-\mu\gamma){\ge0},
	\end{aligned}
\end{equation*}
{since $2\sigma \alpha\ge\mu(\alpha+\gamma)$.}  Accordingly,
\begin{equation*}
	\begin{aligned}
		h'(\beta)=&16\beta^3\alpha^2\gamma^2(\alpha\sigma+\gamma\mu)-6\beta^2\alpha\gamma\mu\sigma(\alpha^2+\gamma^2+10\alpha\gamma)+12\beta\alpha\gamma\mu\sigma(\alpha+\gamma)(\mu+\sigma)\\
		&-2\mu\sigma\big[\mu\sigma(\alpha+\gamma)^2+2\alpha\gamma(\mu-\sigma)^2\big].
		\\
		h''(\beta)=&48\beta^2\alpha^2\gamma^2(\alpha\sigma+\gamma\mu)-12\beta\alpha\gamma\mu\sigma(\alpha^2+\gamma^2+10\alpha\gamma)+12\alpha\gamma\mu\sigma(\alpha+\gamma)(\mu+\sigma)
		\\
		=&12\alpha\gamma\big\{4\beta^2\alpha\gamma(\alpha\sigma+\gamma\mu)-\beta\mu\sigma(\alpha^2+\gamma^2+10\alpha\gamma)+\mu\sigma(\alpha+\gamma)(\mu+\sigma)\big\}.
		\\
		h'''(\beta)
		\ge	h'''(\frac{\sigma}{\gamma})
		=	&12\alpha\gamma\sigma\big[8\alpha(\alpha\sigma+\gamma\mu)-\mu(\alpha^2+\gamma^2+10\alpha\gamma)\big]
		\\
		=&12\alpha\gamma\sigma\big[8\alpha^2\sigma-\mu(\alpha+\gamma)^2\big]
		\ge12\alpha\gamma\sigma(4\sigma\alpha-\mu\alpha-\mu\gamma){>0}.
	\end{aligned}
\end{equation*}
Thus
\begin{equation*}
	\begin{aligned}
	h''(\beta)\ge	h''(\frac{\sigma}{\gamma})&=12\alpha\gamma\big\{4\sigma^2\alpha\gamma^{-1}(\alpha\sigma+\gamma\mu)-\mu\sigma^2\gamma^{-1}(\alpha^2+\gamma^2+10\alpha\gamma)+\mu\sigma(\alpha+\gamma)(\mu+\sigma)\big\}
		\\
		&=12\alpha\sigma(\sigma\alpha-\gamma\mu)(4\sigma\alpha-\mu\alpha-\mu\gamma){>0}.
	\end{aligned}
\end{equation*}
Therefore,
\begin{equation*}
	\begin{aligned}
		h'(\beta)\ge h'(\frac{\sigma}{\gamma})=&16\sigma^3\alpha^2\gamma^{-1}(\alpha\sigma+\gamma\mu)-6\alpha\gamma^{-1}\mu\sigma^3(\alpha^2+\gamma^2+10\alpha\gamma)+12\alpha\mu\sigma^2(\alpha+\gamma)(\mu+\sigma)\\
		&-2\mu\sigma\big[\mu\sigma(\alpha+\gamma)^2+2\alpha\gamma(\mu-\sigma)^2\big]
		\\
		=&2\sigma\gamma^{-1}(\sigma\alpha-\gamma\mu)\big\{
		8\sigma\alpha(\sigma\alpha-\gamma\mu)-\mu\alpha(3\sigma\alpha-2\gamma\mu)+\mu\sigma\gamma^2
		\big\}
		\\
		\ge& 2\sigma\gamma^{-1}(\sigma\alpha-\gamma\mu)\big\{
		{4(\mu\alpha+\mu\gamma)}(\sigma\alpha-\gamma\mu)-\mu\alpha(3\sigma\alpha-2\gamma\mu)+\mu\sigma\gamma^2
		\big\}
		\\
		>& 2\sigma\gamma^{-1}(\sigma\alpha-\gamma\mu)\big\{
		{4\mu\alpha}(\sigma\alpha-\gamma\mu)-\mu\alpha(3\sigma\alpha-2\gamma\mu)+\mu\sigma\gamma^2
		\big\}
		\\
		=& 2\sigma\gamma^{-1}(\sigma\alpha-\gamma\mu)\mu \big\{
		\sigma(\alpha^{2}+\gamma^{2})-2\alpha\gamma\mu
		\big\}{>0\quad \mbox{as}\ \sigma\ge\mu.}
	\end{aligned}
\end{equation*}
\end{proof}

\subsection{Estimates of $m(\beta_j)$}
\begin{lemma}\label{est-m-beta}
For the $\beta_{j}$ such that $-\delta_{1}(\beta_{j})=\lambda_{j}$, 
it holds that 
\begin{equation*}
(\beta_j^2\alpha\gamma+\mu\sigma-2\beta_j\mu\gamma)m(\beta_{j})-(\beta_j^2\alpha\gamma+\mu\sigma-2\beta_j\alpha\sigma)<0.
\end{equation*}
\end{lemma}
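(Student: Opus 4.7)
The plan is to clear denominators in the desired inequality by exploiting the closed-form expression
\begin{equation*}
m(\beta_{j}) = -\frac{\lambda_{j}+\sigma b_{\beta_{j}}}{\beta_{j}\gamma b_{\beta_{j}}}
\end{equation*}
already derived in the kernel computation. Writing $P:=\beta_{j}^{2}\alpha\gamma+\mu\sigma-2\beta_{j}\mu\gamma$ and $Q:=\beta_{j}^{2}\alpha\gamma+\mu\sigma-2\beta_{j}\alpha\sigma$, the claim $Pm(\beta_{j})-Q<0$ is equivalent, after multiplication by the positive quantity $\beta_{j}\gamma b_{\beta_{j}}$ and using $-m(\beta_{j})\beta_{j}\gamma b_{\beta_{j}}=\lambda_{j}+\sigma b_{\beta_{j}}$, to the inequality
\begin{equation*}
Q\beta_{j}\gamma b_{\beta_{j}} + P\bigl(\lambda_{j}+\sigma b_{\beta_{j}}\bigr) > 0.
\end{equation*}

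Next I would collect the $b_{\beta_{j}}$-terms and substitute the explicit value $b_{\beta_{j}}=\mu(\beta_{j}\gamma-\sigma)/(\beta_{j}^{2}\alpha\gamma-\mu\sigma)$. The key algebraic step is the factorization
\begin{equation*}
Q\beta_{j}\gamma + P\sigma = (\beta_{j}\gamma-\sigma)\bigl(\beta_{j}^{2}\alpha\gamma-\mu\sigma\bigr),
\end{equation*}
verified by direct expansion and regrouping the monomials $\beta_{j}^{3}\alpha\gamma^{2}-\beta_{j}^{2}\alpha\gamma\sigma-\beta_{j}\gamma\mu\sigma+\mu\sigma^{2}$. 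Combined with the closed form for $b_{\beta_{j}}$, this collapses the whole expression to the clean identity
\begin{equation*}
Q\beta_{j}\gamma b_{\beta_{j}} + P\bigl(\lambda_{j}+\sigma b_{\beta_{j}}\bigr) = P\lambda_{j}+\mu(\beta_{j}\gamma-\sigma)^{2}.
\end{equation*}

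It then remains to verify that both summands are strictly positive. We have $\lambda_{j}>0$ and $\beta_{j}\gamma-\sigma>0$ by the hypothesis $\beta_{j}>\sigma/\gamma$, so only the positivity of $P$ is in question. Viewed as a quadratic in $\beta$, the polynomial $\alpha\gamma\beta^{2}-2\mu\gamma\beta+\mu\sigma$ has discriminant $4\gamma\mu(\mu\gamma-\alpha\sigma)$, which is negative under the standing hypotheses $\alpha>\gamma>0$ and $\sigma\ge\mu>0$ (as then $\alpha\sigma>\gamma\mu$); consequently $P>0$ for every $\beta\in\mathbb{R}$, in particular for $\beta=\beta_{j}$.

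The only genuinely non-formal ingredient is the factorization of $Q\beta_{j}\gamma+P\sigma$, which I expect to be the main hurdle in presentation rather than in substance; the rest reduces to sign analysis of the standing inequalities. I do not foresee further obstacles, since the eigenvector identity for $m(\beta_{j})$ and the explicit formula for $b_{\beta_{j}}$ are already at hand from the earlier local bifurcation analysis.
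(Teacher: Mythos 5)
Your proof is correct, but it follows a genuinely different and more economical route than the paper. You use the exact kernel identity $m(\beta_j)=-\frac{\lambda_j+\sigma b_{\beta_j}}{\beta_j\gamma b_{\beta_j}}$, clear the positive denominator $\beta_j\gamma b_{\beta_j}$, and exploit the factorization $Q\beta_j\gamma+P\sigma=(\beta_j\gamma-\sigma)(\beta_j^2\alpha\gamma-\mu\sigma)$ (which indeed checks out upon expansion: both sides equal $\beta_j^3\alpha\gamma^2-\beta_j^2\alpha\gamma\sigma-\beta_j\gamma\mu\sigma+\mu\sigma^2$), so that with $b_{\beta_j}=\mu(\beta_j\gamma-\sigma)/(\beta_j^2\alpha\gamma-\mu\sigma)$ the whole quantity collapses to the manifestly positive closed form $P\lambda_j+\mu(\beta_j\gamma-\sigma)^2$; your discriminant argument for $P>0$ (discriminant $4\gamma\mu(\mu\gamma-\alpha\sigma)<0$ since $\alpha>\gamma$, $\sigma\ge\mu$) is also valid, and in fact holds for all real $\beta$, not just $\beta>\sigma/\gamma$. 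The paper instead replaces $m(\beta_j)$ by the strict upper bound $-\frac{\lambda_j}{\mu}-\frac{\sigma}{\beta_j\gamma}$ (obtained from $\beta^2\alpha\gamma-\mu\sigma>\beta\gamma(\beta\gamma-\sigma)$) and then must show that the resulting cubic $H(\beta)=-(\lambda_j+\mu)\beta^3\alpha\gamma^2+\beta^2\gamma\mu(2\gamma\lambda_j+\sigma\alpha)+\beta\gamma\mu\sigma(\mu-\lambda_j)-\mu^2\sigma^2$ is negative on $(\sigma/\gamma,+\infty)$, which it does by computing $H'''<0$ and checking $H''$, $H'$, $H$ at the endpoint $\beta=\sigma/\gamma$. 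Your exact-identity argument avoids both the intermediate estimate and the three-derivative analysis, is shorter, and yields more: an explicit formula $-(Pm(\beta_j)-Q)\,\beta_j\gamma b_{\beta_j}=P\lambda_j+\mu(\beta_j\gamma-\sigma)^2$ rather than a bare sign. The only caveat worth making explicit in a write-up is the (easy) verification $b_{\beta_j}>0$ and $\beta_j^2\alpha\gamma-\mu\sigma>0$ under $\beta_j>\sigma/\gamma$, which justifies clearing the denominator without flipping the inequality; both follow immediately from the standing hypotheses, as the paper also uses.
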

\begin{proof}
 On the one hand, $m(\beta)<0$ and 
 \begin{equation*}
 	\beta^2\alpha\gamma+\mu\sigma-2\beta\mu\gamma\ge \sigma^2\alpha\gamma^{-1}+\mu\sigma-2\sigma\mu=\sigma\gamma^{-1}(\sigma\alpha-\mu\gamma)>0.
 \end{equation*}
 It is easy to see that $\beta^{2}\alpha\gamma-\mu\sigma>\beta\gamma(\beta\gamma-\sigma)>0$ for all $\beta>\frac{\sigma}{\gamma}$. Besides, 
\begin{equation*}
\begin{aligned}
	m(\beta_{j})
	=-\frac{\lambda_{j}+\sigma b_{\beta_{j}}}{\beta_{j}\gamma b_{\beta_{j}}}
	=-\frac{\lambda_{j}(\beta_{j}^{2}\alpha\gamma-\mu\sigma)}{\mu\beta_{j}\gamma(\beta_{j}\gamma-\sigma)}-\frac{\sigma}{\beta_{j}\gamma}<-\frac{\lambda_{j}}{\mu}-\frac{\sigma}{\beta_{j}\gamma}.
	\end{aligned}
\end{equation*}
Moreover
	\begin{equation*}
	\begin{aligned}
	&
	(\beta^2\alpha\gamma+\mu\sigma-2\beta\mu\gamma)\big(-\frac{\lambda_{j}}{\mu}-\frac{\sigma}{\beta\gamma}\big)-(\beta^2\alpha\gamma+\mu\sigma-2\beta\alpha\sigma)
	=\frac{1}{\mu\beta\gamma}H(\beta),
	\end{aligned}
	\end{equation*}
	where
	\begin{equation*}
		H(\beta)=-(\lambda_{j}+\mu)\beta^3\alpha\gamma^2+\beta^2\gamma\mu(2\gamma\lambda_{j}+\sigma\alpha)+\beta\gamma\mu\sigma(\mu-\lambda_{j})-\mu^{2}\sigma^2.
	\end{equation*}
{	It suffices to prove $H(\beta)<0$.} Indeed,
\begin{equation*}
	\begin{aligned}
		H'(\beta)&=-3(\lambda_{j}+\mu)\beta^2\alpha\gamma^2+2\beta\gamma\mu(2\gamma\lambda_{j}+\sigma\alpha)+\gamma\mu\sigma(\mu-\lambda_{j}).
		\\
		H''(\beta)&=-6(\lambda_{j}+\mu)\beta\alpha\gamma^2+2\gamma\mu(2\gamma\lambda_{j}+\sigma\alpha).
		\\
		H'''(\beta)&=-6(\lambda_{j}+\mu)\alpha\gamma^2<0.
	\end{aligned}
\end{equation*}
Therefore
\begin{equation*}
	\begin{aligned}
		H''(\beta)\le H''(\frac{\sigma}{\gamma})&=-6(\lambda_{j}+\mu)\sigma\alpha\gamma+2\gamma\mu(2\gamma\lambda_{j}+\sigma\alpha)
		\\
		&=2\gamma\big[\lambda_{j}(-3\sigma\alpha+2\mu\gamma)-2\mu\sigma\alpha\big]<0.
	\end{aligned}
\end{equation*}
Consequently,
\begin{equation*}
	\begin{aligned}
		H'(\beta)\le H'(\frac{\sigma}{\gamma})&=-3(\lambda_{j}+\mu)\sigma^2\alpha+2\sigma\mu(2\gamma\lambda_{j}+\sigma\alpha)+\gamma\mu\sigma(\mu-\lambda_{j})
		\\
		&=\sigma(-3\lambda_j-\mu)(\sigma\alpha-\gamma\mu)<0.
	\end{aligned}
\end{equation*}
Accordingly,
\begin{equation*}
	\begin{aligned}
		H(\beta)\le H(\frac{\sigma}{\gamma})&=-(\lambda_{j}+\mu)\sigma^3\alpha\gamma^{-1}+\sigma^2\gamma^{-1}\mu(2\gamma\lambda_{j}+\sigma\alpha)+\mu\sigma^2(\mu-\lambda_{j})-\mu^{2}\sigma^2
		\\
		&=\sigma^2\gamma^{-1}\lambda_{j}\big[-\sigma\alpha+\mu\gamma\big]<0.
	\end{aligned}
\end{equation*}
\end{proof}
\begin{lemma}\label{derivative}
For $\beta_{j}$ sufficiently large, it holds that
 \begin{equation}\label{star}
 	m(\beta_{j})[\mu m(\beta_{j})+\beta
	_{j}\alpha]\cdot \frac{\gamma b_{\beta_{j}}}{\alpha a_{\beta_{j}}}m(\beta_{j})+\sigma+\beta_{j}\gamma m(\beta_{j})
 \end{equation}
 is always positive.
 Furthermore, if we make slight adjustments to the parameters $\alpha$, $\sigma$, and $\gamma$, then \eqref{star} will not equal to zero for any $j$.
\end{lemma}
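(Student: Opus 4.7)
The plan is to first exploit the identity $\beta_j\gamma b_{\beta_j}m(\beta_j) = -(\lambda_j + \sigma b_{\beta_j})$ coming from the definition of $m(\beta_j)$. This immediately simplifies $\sigma + \beta_j\gamma m(\beta_j) = -\lambda_j/b_{\beta_j}$ (the same manipulation already used in Lemma \ref{est-m-beta}). Substituting for $m(\beta_j)^{2}$ and $m(\beta_j)^{3}$ in the first summand rewrites \eqref{star} as
\begin{equation*}
E(\beta_{j}) \;=\; \frac{(\lambda_j + \sigma b_{\beta_j})^{2}}{a_{\beta_j}\,\beta_j\gamma b_{\beta_j}}\Bigl[1 - \frac{\mu(\lambda_j + \sigma b_{\beta_j})}{\alpha\,\beta_j^{2}\gamma b_{\beta_j}}\Bigr] - \frac{\lambda_j}{b_{\beta_j}},
\end{equation*}
which is a rational function of $a_{\beta_j}, b_{\beta_j}, \lambda_j$ whose sign is far more tractable than the cubic-in-$m$ form of \eqref{star}.

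Next, I would use the explicit formulas for $a_{\beta_j}, b_{\beta_j}$ together with the asymptotics $\lambda_j\to\sqrt{\mu\sigma}$, $a_{\beta_j}\sim \sigma/(\beta_j\gamma)$, $b_{\beta_j}\sim \mu/(\beta_j\alpha)$ as $\beta_j\to\infty$ (this limit regime being exactly the hypothesis of the lemma, and a consequence of the formula for $-\delta_1$ in the proof of Theorem \ref{bifurcation-NBC}). Clearing denominators in $E(\beta_j)$ reduces the sign question to the positivity of a polynomial $P(\beta_j;\alpha,\gamma,\mu,\sigma)$ in $\beta_j$. Following the same boot-strap used in Proposition \ref{mono-beta} and Lemma \ref{est-m-beta} — evaluate $P$ and $P'$ at $\beta=\sigma/\gamma$, iteratively differentiate, and control each derivative at the left endpoint — should give the claimed sign for $\beta_j$ sufficiently large.

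For the second assertion, I would argue by real-analyticity. Both the implicitly defined $\beta_j=\beta_j(\alpha,\gamma,\sigma)$ (coming from $-\delta_1(\beta_j)=\lambda_j$) and the expression \eqref{star} depend real-analytically on $(\alpha,\gamma,\sigma)$ on the open parameter region $\{\beta_j>\sigma/\gamma\}$. For each fixed $j$, the first part of the lemma exhibits an open subregion (the large-$\beta_j$ regime) on which \eqref{star} is nonzero; therefore its zero set is a proper real-analytic subvariety with empty interior. Taking a countable union over $j=1,\dots,k$ still yields a meager set in parameter space, so an arbitrarily small perturbation of $(\alpha,\gamma,\sigma)$ ensures \eqref{star} is simultaneously nonzero for every $j$, as required.

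The main obstacle is the sign analysis in step one: the leading $O(\beta_j)$ contributions from the two summands in \eqref{star} partially cancel (indeed, in the limiting case $\sigma=\mu$ the $O(\beta_j)$ cancellation is complete), so the argument must be pushed to the subleading order to capture the first non-vanishing term. Tracking this cancellation accurately through the substitutions and then verifying positivity via the iterated-differentiation monotonicity scheme of Proposition \ref{mono-beta} is the technical core of the proof; the second assertion is, by contrast, an essentially soft analyticity-plus-genericity argument.
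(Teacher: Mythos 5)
Your opening reduction is correct and is precisely the paper's first step: the paper likewise uses $\beta_j\gamma b_{\beta_j}m(\beta_j)=-(\lambda_j+\sigma b_{\beta_j})$ to get $\sigma+\beta_j\gamma m(\beta_j)=-\lambda_j/b_{\beta_j}$ and $\mu m(\beta_j)+\beta_j\alpha=\mu(\beta_j\gamma-\sigma-\lambda_j)/(\beta_j\gamma b_{\beta_j})$, and then clears denominators to reduce the sign question to an explicit quintic polynomial $z(\beta_j)$ in $\beta_j$. But your stated ``technical core'' rests on a false premise, and this is the genuine gap. Carry out your own step two: with $a_{\beta_j}\sim\sigma/(\beta_j\gamma)$, $b_{\beta_j}\sim\mu/(\beta_j\alpha)$ and $\lambda_j+\sigma b_{\beta_j}\to\lambda_j$, your expression $E(\beta_j)$ satisfies
\begin{equation*}
E(\beta_j)=\frac{\lambda_j^2\alpha}{\mu\sigma}\beta_j-\frac{\lambda_j\alpha}{\mu}\beta_j+O(1)=\frac{\lambda_j\alpha}{\mu\sigma}\,(\lambda_j-\sigma)\,\beta_j+O(1).
\end{equation*}
Since $\lambda_j\le\lambda_k<\sqrt{\mu\sigma}\le\sigma$ \emph{strictly} (the $\lambda_j$ are fixed Neumann eigenvalues lying strictly below $\sqrt{\mu\sigma}$, even when $\sigma=\mu$), the coefficient $\lambda_j-\sigma$ never vanishes: the $O(\beta_j)$ cancellation is only partial, is never complete in the case $\sigma=\mu$, and no subleading analysis is needed. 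Correspondingly, the paper finishes in one line by reading off the top coefficient $\lambda_j\alpha^2\gamma^3(\sigma-\lambda_j)>0$ of $z(\beta_j)$. Your proposed substitute — the iterated-differentiation bootstrap of Proposition \ref{mono-beta} from the endpoint $\beta=\sigma/\gamma$ — is both unnecessary and unlikely to run: it would be proving positivity on all of $(\sigma/\gamma,+\infty)$, which the lemma neither claims nor needs, and the intermediate coefficients of $z$ (e.g.\ the $\beta^4$, $\beta^3$, $\beta^2$ terms) change sign with the parameters, so the derivative signs at the left endpoint do not propagate as they do in Proposition \ref{mono-beta}.

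Two further points. First, note that your leading term is \emph{negative}, so the computation actually shows \eqref{star}$\,<0$ for $\beta_j$ large; the same is implicit in the paper's proof (a negative prefactor $-1/(\beta_j^2\alpha\gamma a_{\beta_j}b_{\beta_j})$ times a positive bracket), so the word ``positive'' in the statement is at odds with the proof itself — what is actually used in Remark \ref{derivative-beta-neq0} is only nonvanishing, and aiming your scheme at positivity would chase the wrong sign. Second, your analyticity-plus-genericity argument for the final assertion is sound and in fact supplies detail the paper omits (the paper merely asserts that slight parameter adjustments make $z(\beta_j)\neq0$): you should just record that $\beta_j$ depends analytically on $(\alpha,\gamma,\mu,\sigma)$ by the implicit function theorem, which requires the strict monotonicity $\frac{\mathrm{d}}{\mathrm{d}\beta}(-\delta_1)>0$ of Proposition \ref{mono-beta}, and that the relevant parameter region is connected so that nonvanishing on the large-$\beta_j$ open subregion forces the zero set of each analytic function to be nowhere dense.
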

\begin{proof}
 On the one hand, 
 \begin{equation*}
 	\begin{aligned}
 		 \mu m(\beta_{j})+\beta
 		_{j}\alpha&=\frac{-\mu\lambda_{j}-\mu\sigma b_{\beta_{j}}}{\beta_{j}\gamma b_{\beta_{j}}}+\beta
 		_{j}\alpha=\frac{-\mu\lambda_{j}+(\beta_j^2\alpha\gamma-\mu\sigma) b_{\beta_{j}}}{\beta_{j}\gamma b_{\beta_{j}}}=\frac{-\mu\lambda_{j}+\mu(\beta_j\gamma-\sigma) }{\beta_{j}\gamma b_{\beta_{j}}}.
 		\\
 		\sigma+\beta_{j}\gamma m(\beta_{j})&=\sigma-\beta_{j}\gamma\frac{\lambda_{j}+\sigma b_{\beta_{j}}}{\beta_{j}\gamma b_{\beta_{j}}}=\sigma-\frac{\lambda_{j}+\sigma b_{\beta_{j}}}{ b_{\beta_{j}}}=-\frac{\lambda_{j}}{ b_{\beta_{j}}}.
 	\end{aligned}
 \end{equation*}
 Therefore,
 \begin{equation*}
 	\begin{aligned}
 	&	m(\beta_{j})[\mu m(\beta_{j})+\beta
 	_{j}\alpha]\cdot \frac{\gamma b_{\beta_{j}}}{\alpha a_{\beta_{j}}}m(\beta_{j})+\sigma+\beta_{j}\gamma m(\beta_{j})
 		\\
 		=&m(\beta_{j})\mu\frac{-\lambda_{j}+(\beta_j\gamma-\sigma) }{\beta_{j}\gamma b_{\beta_{j}}}\cdot \frac{\gamma b_{\beta_{j}}}{\alpha a_{\beta_{j}}}m(\beta_{j})-\frac{\lambda_{j}}{ b_{\beta_{j}}}
 		\\
 		=&m^2(\beta_{j})\mu\frac{-\lambda_{j}+(\beta_j\gamma-\sigma) }{\beta_{j}\alpha a_{\beta_{j}}}-\frac{\lambda_{j}}{ b_{\beta_{j}}}
 		\\
 		=&-\frac{1}{\beta_j^2\alpha\gamma a_{\beta_{j}} b_{\beta_{j}}}\big[m(\beta_j)\mu(\lambda_{j}+\sigma b_{\beta_{j}})(\beta_j\gamma-\sigma-\lambda_{j})+\lambda_{j}\beta_j^2\alpha\gamma a_{\beta_{j}}\big].
 	\end{aligned}
 \end{equation*}
 As a matter of fact, { if $\beta_j\gamma-\sigma-\lambda_{j}<0$, then \eqref{star} holds. In the following, we assume $\beta_j\gamma-\sigma-\lambda_{j}\ge0$. }
 Furthermore,
 \begin{equation*}
 	\begin{aligned}
 	&	m(\beta_j)\mu(\lambda_{j}+\sigma b_{\beta_{j}})(\beta_j\gamma-\sigma-\lambda_{j})+\lambda_{j}\beta_j^2\alpha\gamma a_{\beta_{j}}
		=& 
 				\frac{\mu}{\beta_j\gamma b_{\beta_j}(\beta_j^2\alpha\gamma-\mu\sigma)^2}
 			z(\beta_{j}),
 	\end{aligned}
 \end{equation*}
 where
 \begin{equation*}
 \begin{aligned}
 z(\beta_{j})=&
 -\big(\lambda_{j}\beta_j^2\alpha\gamma+\sigma\mu (\beta_j\gamma-\sigma-\lambda_{j})\big)^2(\beta_j\gamma-\sigma-\lambda_{j})+\lambda_{j}\beta_j^3\alpha\gamma^2 \sigma(\beta_j\alpha-\mu) (\beta_j\gamma-\sigma)
 \\ 
 =& \beta_j^5\lambda_{j}\alpha^2\gamma^3 (\sigma-\lambda_{j})+\beta_j^4\lambda_{j}\alpha\gamma^2\big[\lambda_j\alpha(\sigma+\lambda_{j})-\sigma(\sigma\alpha+3\gamma\mu)\big]
+\beta_j^3\gamma^2 \sigma\mu\big\{-\gamma\sigma\mu+\lambda_{j}\alpha(5\sigma+4\lambda_{j})\big\}
 \\
 &+\beta_{j}^{2}\gamma\sigma\mu(\sigma+\lambda_{j})[3\gamma\sigma\mu-2\lambda_{j}\alpha(\sigma+\lambda_{j})]
 -3\beta_j\gamma\sigma^2\mu^2(\sigma+\lambda_j)^2+(\sigma+\lambda_j)^{3}\sigma^{2}\mu^{2}.
 \end{aligned}
 \end{equation*}
{From the above expression,  since $\lambda_j<\sqrt{\mu\sigma}\le \sigma$, we know that for $\beta_j$ large, \eqref{star} is true.}  Furthermore, if we make slight adjustments to the parameters $\alpha$, $\sigma$, $\mu$ and $\gamma$, then $z(\beta_{j})$ will not equal to zero for any $j$.
 \end{proof}

 \subsection{Monotonicity of $\delta_{2}(\beta,\lambda)$ in global bifurcation analysis}
 \begin{prop}\label{mono-lambda}
 For the $\delta_{1}(\beta,\lambda)$ and $\delta_{2}(\beta,\lambda)$ defined in \eqref{eigen-lambda}, it holds that 
 \begin{equation*}
 \delta_{1}(\beta,\lambda)<0,\quad  \frac{\partial}{\partial\lambda}\delta_{2}(\beta,\lambda)<0, \quad   \frac{\partial}{\partial\beta}\delta_{2}(\beta,\lambda)>0.
 \end{equation*}
 \end{prop}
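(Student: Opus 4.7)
My plan is to reduce all three statements to implicit differentiation of the $2 \times 2$ characteristic polynomial
\begin{equation*}
p(\delta;\beta,\lambda) = \delta^{2} - \mathrm{tr}\,D(\beta,\lambda)\,\delta + \det D(\beta,\lambda)
\end{equation*}
of the matrix in \eqref{eigen-lambda}. The first move is to use the identities $\beta\alpha b_{\beta} = \mu(1-a_{\beta})$ and $\beta\gamma a_{\beta} = \sigma(1-b_{\beta})$, which follow from the definition of $(a_{\beta}, b_{\beta})$, to rewrite
\begin{equation*}
\mathrm{tr}\,D(\beta,\lambda) = -(\mu+\sigma) + \frac{T_{N}}{\lambda}, \qquad \det D(\beta,\lambda) = \mu\sigma - \frac{M_{D}}{\lambda},
\end{equation*}
where $T_{N} := (\mu+\sigma) - (\mu a_{\beta} + \sigma b_{\beta})$ and $M_{D} := \mu\sigma(2 - a_{\beta} - b_{\beta})$. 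Lemma \ref{upperbound-ui} gives $a_{\beta}, b_{\beta} \in (0,1)$, so both $T_{N}$ and $M_{D}$ are strictly positive on the admissible range $\beta > \sigma/\gamma$. This rewriting makes the parameter dependences transparent and is the backbone of what follows.

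For $\delta_{1}<0$, I will observe that in the range $\lambda \geq 1$ (the only one needed by the Rabinowitz argument of Section 3), one has $T_{N}/\lambda \leq T_{N} < \mu+\sigma$, so the trace is negative. Since $\delta_{1}$ is the smaller root of a real monic quadratic whose sum of roots is negative, $\delta_{1}<0$ follows irrespective of the sign of the determinant.

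The two monotonicity claims both hinge on the intermediate bound $\delta_{2}(\beta,\lambda) > -\mu$. I will prove this by evaluating $p$ at $-\mu$; a short calculation using $T_{N}$ and $M_{D}$ collapses to $p(-\mu) = \mu(\mu-\sigma)(1-a_{\beta})/\lambda \leq 0$, strict exactly when $\sigma>\mu$. When the inequality is strict, $-\mu$ separates the two real roots, so $\delta_{2}>-\mu$; when $\sigma=\mu$, Vieta identifies the two eigenvalues as $-\mu$ and $-\mu + T_{N}/\lambda$, and $T_{N}>0$ forces the second to be $\delta_{2}$. With this bound in hand, implicit differentiation gives
\begin{equation*}
\frac{\partial \delta_{2}}{\partial\lambda} = -\frac{T_{N}\delta_{2}+M_{D}}{\lambda^{2}\sqrt{(\mathrm{tr}\,D)^{2}-4\det D}},
\end{equation*}
negative precisely when $\delta_{2} > -M_{D}/T_{N}$. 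The algebraic identity $M_{D} - \mu T_{N} = \mu(\sigma-\mu)(1-a_{\beta}) \geq 0$ yields $-M_{D}/T_{N} \leq -\mu < \delta_{2}$, closing the $\lambda$-monotonicity.

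The $\beta$-derivative is the main obstacle. Implicit differentiation produces
\begin{equation*}
\frac{\partial \delta_{2}}{\partial\beta} = \frac{(T_{N})_{\beta}\delta_{2} + (M_{D})_{\beta}}{\lambda\sqrt{(\mathrm{tr}\,D)^{2}-4\det D}},
\end{equation*}
so once $(T_{N})_{\beta}>0$ is established, positivity reduces to $\delta_{2} > -(M_{D})_{\beta}/(T_{N})_{\beta}$. The parallel identity $(M_{D})_{\beta} - \mu(T_{N})_{\beta} = \mu\,a_{\beta}'\,(\mu-\sigma) \geq 0$ (using $\sigma \geq \mu$ and $a_{\beta}'<0$) brings this back to $\delta_{2}>-\mu$, already known. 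The sign $a_{\beta}'<0$ is immediate from the formula for $da_{\beta}/d\beta$ recorded in Section 2, since the numerator factor $\beta^{2}\alpha\gamma + \mu\sigma - 2\beta\gamma\mu$ is a parabola in $\beta$ with strictly positive minimum at $\beta = \mu/\alpha$. The hard part, and the main obstacle, is the remaining sign $(T_{N})_{\beta}>0$, equivalently that $\mu a_{\beta} + \sigma b_{\beta}$ is strictly decreasing on $(\sigma/\gamma, +\infty)$. Written as a rational function of $\beta$, its derivative has a downward-opening quadratic numerator whose sign on $(\sigma/\gamma,+\infty)$ is not forced by AM-GM alone; I expect to use a root-location argument in the spirit of Proposition \ref{mono-beta}, computing the sum and product of the two roots and checking that both lie below $\sigma/\gamma$, with the hypothesis $\sigma \geq \mu$ combined with $\alpha > \gamma$ entering decisively there.
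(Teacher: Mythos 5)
Your proposal is correct, and it takes a genuinely different route from the paper. The paper works directly with the closed-form root expressions in \eqref{eigen-lambda}: it proves $\delta_{1}<0$ by checking $\mu+\sigma-\frac{\beta}{\lambda}(\alpha b_{\beta}+\gamma a_{\beta})>0$ for $\lambda>1$ and rewriting the discriminant as $\bigl(\sigma-\mu-\frac{\beta}{\lambda}(\gamma a_{\beta}-\alpha b_{\beta})\bigr)^{2}+4\frac{\beta^{2}}{\lambda^{2}}\alpha\gamma a_{\beta}b_{\beta}>0$; it then differentiates the explicit square-root formula, handling $\partial_{\lambda}\delta_{2}<0$ by a case split on the sign of $\sigma-\mu-\frac{\beta}{\lambda}(\gamma a_{\beta}-\alpha b_{\beta})$ together with $\alpha b_{\beta}-\gamma a_{\beta}<0$, and handling $\partial_{\beta}\delta_{2}>0$ by regrouping via $(\gamma a-\alpha b)^{2}+4\alpha\gamma ab=(\gamma a+\alpha b)^{2}$ and establishing the two sign facts $(\alpha b_{\beta}+\gamma a_{\beta})+\beta(\alpha b_{\beta}'+\gamma a_{\beta}')>0$ and $(\gamma a_{\beta}-\alpha b_{\beta})+\beta(\gamma a_{\beta}'-\alpha b_{\beta}')<0$. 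Your implicit-differentiation scheme replaces most of this machinery: I verified the equilibrium identities $\beta\alpha b_{\beta}=\mu(1-a_{\beta})$, $\beta\gamma a_{\beta}=\sigma(1-b_{\beta})$, the cancellation of the $\beta^{2}a_{\beta}b_{\beta}$ term that makes $\det D=\mu\sigma-M_{D}/\lambda$ affine in $1/\lambda$, the evaluation $p(-\mu)=\mu(\mu-\sigma)(1-a_{\beta})/\lambda\le 0$, and the identities $M_{D}-\mu T_{N}=\mu(\sigma-\mu)(1-a_{\beta})$ and $(M_{D})_{\beta}-\mu(T_{N})_{\beta}=\mu a_{\beta}'(\mu-\sigma)$, all correct. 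Your single pivot $\delta_{2}>-\mu$ — which sharpens the limit $\delta_{2}(\beta,\lambda)\to-\mu$ as $\lambda\to\infty$ that the paper only observes — settles both monotonicities at once with no case analysis, and your cleaner bookkeeping even sidesteps an algebra slip in the paper's $\partial_{\beta}$ computation (the printed bracket $\beta^{2}\alpha\gamma(\alpha+3\gamma)-\beta\alpha\gamma(\mu+\sigma)+\mu\sigma(\alpha-\gamma)$ should read $(\alpha+\gamma)(\beta^{2}\alpha\gamma+\mu\sigma)-2\beta\alpha\gamma(\mu+\sigma)$, harmless since both are positive on the range). Notably, your residual ``hard'' inequality $(T_{N})_{\beta}>0$ is exactly that corrected quantity,
\begin{equation*}
(T_{N})_{\beta}=\frac{\mu\sigma\bigl[(\alpha+\gamma)(\beta^{2}\alpha\gamma+\mu\sigma)-2\beta\alpha\gamma(\mu+\sigma)\bigr]}{(\beta^{2}\alpha\gamma-\mu\sigma)^{2}},
\end{equation*}
and the root-location argument you sketch is the paper's own (appearing already inside Proposition \ref{mono-beta}): the vertex $\frac{\mu+\sigma}{\alpha+\gamma}$ lies below $\frac{\sigma}{\gamma}$ because $\gamma\mu<\sigma\alpha$, and the value at $\beta=\frac{\sigma}{\gamma}$ equals $\frac{\sigma}{\gamma}(\alpha-\gamma)(\sigma\alpha-\gamma\mu)>0$, so the quadratic is positive on $(\frac{\sigma}{\gamma},+\infty)$; your plan closes. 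Two small points to make explicit in a write-up: justify realness and simplicity of the roots before speaking of ``the smaller root'' and before dividing by $2\delta_{2}-\mathrm{tr}\,D=\sqrt{(\mathrm{tr}\,D)^{2}-4\det D}$ in the implicit differentiation — this follows either from the off-diagonal entries of $D(\beta,\lambda)$ being both negative, so the discriminant is $(d_{11}-d_{22})^{2}+4d_{12}d_{21}>0$, or retroactively from your own $p(-\mu)\le 0$ analysis — and note that $\lambda\ge 1$ is used only for $\delta_{1}<0$, your monotonicity arguments being valid for all $\lambda>0$.
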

 \begin{proof}
 By straightforward calculations, 
  \begin{equation*}
 	\begin{aligned}
 	\delta_{1}(\beta,\lambda)&=\frac{1}{2\lambda}\Big[	-\mu\lambda	+\beta\alpha b_{\beta}-\sigma\lambda+\beta\gamma a_{\beta}-\sqrt{(\mu\lambda	-\beta\alpha b_{\beta}+\sigma\lambda-\beta\gamma a_{\beta})^2-4(\mu\sigma\lambda^2-\beta\alpha b_{\beta}\sigma\lambda-\beta\gamma a_{\beta}\mu\lambda}\Big]
	\\
	&=\frac{1}{2}\Big[	-\big(\mu	+\sigma-\frac{\beta}{\lambda}(\alpha b_{\beta}+\gamma a_{\beta})\big)-\sqrt{\big(\mu	+\sigma-\frac{\beta}{\lambda}(\alpha b_{\beta}+\gamma a_{\beta})\big)^2-4(\mu\sigma-\frac{1}{\lambda}\beta\alpha b_{\beta}\sigma-\frac{\beta}{\lambda}\gamma a_{\beta}\mu}\Big].
 	\end{aligned}
 \end{equation*}
 In fact, since $\lambda>1$, one has
 \begin{equation*}
 \begin{aligned}
 \mu	+\sigma-\frac{\beta}{\lambda}(\alpha b_{\beta}+\gamma a_{\beta})
=&\frac{1}{\lambda(\beta^2\alpha\gamma-\mu\sigma)}\Big[\lambda(\mu+\sigma)(\beta^2\alpha\gamma-\mu\sigma)-\beta\sigma\gamma(\beta\alpha-\mu)-\beta\mu\alpha(\beta\gamma-\sigma)\Big]
\\
>&\frac{1}{\lambda(\beta^2\alpha\gamma-\mu\sigma)}\Big[(\mu+\sigma)(\beta^2\alpha\gamma-\mu\sigma)-\beta\sigma\gamma(\beta\alpha-\mu)-\beta\mu\alpha(\beta\gamma-\sigma)\Big]
\\
=&\frac{\sigma\mu}{\lambda(\beta^2\alpha\gamma-\mu\sigma)}\big[(\beta\gamma-\sigma)+(\beta\alpha-\mu)\big]>0.
 \end{aligned}
 \end{equation*}
  Moreover,
   \begin{equation*}
 	\begin{aligned}
	\alpha b_{\beta}-\gamma a_{\beta}
	&
	=\frac{1}{\beta^2\alpha\gamma-\mu\sigma}\big[-\beta\alpha\gamma(\sigma-\mu)-\sigma\mu(\alpha-\gamma)\big]<0.
	 \end{aligned}
 \end{equation*}
 And
 \begin{equation*}
 \begin{aligned}
& \big(\mu	+\sigma-\frac{\beta}{\lambda}(\alpha b_{\beta}+\gamma a_{\beta})\big)^2-4(\mu\sigma-\frac{1}{\lambda}\beta\alpha b_{\beta}\sigma-\frac{1}{\lambda}\beta\gamma a_{\beta}\mu)
\\
=&
 \big(\mu	-\sigma-\frac{\beta}{\lambda}(\alpha b_{\beta}-\gamma a_{\beta})\big)^2+4\frac{\beta^2}{\lambda^2}\alpha\gamma b_{\beta} a_{\beta}>0.
 \end{aligned}
 \end{equation*}
 Thus
  \begin{equation*}
 	\begin{aligned}
 		\delta_{1}(\beta,\lambda)
 		&=\frac{1}{2}\Big[	-\big(\mu	+\sigma-\frac{\beta}{\lambda}(\alpha b_{\beta}+\gamma a_{\beta})\big)-\sqrt{\big(\mu	-\sigma-\frac{\beta}{\lambda}(\alpha b_{\beta}-\gamma a_{\beta})\big)^2+4\frac{\beta^2}{\lambda^2}\alpha\gamma b_{\beta} a_{\beta}}\Big]<0.
 	\end{aligned}
 \end{equation*}
 And 
   \begin{equation*}
 	\begin{aligned}
 		\delta_{2}(\beta,\lambda)
		&=\frac{1}{2}\Big[	-(\mu	+\sigma)+\frac{\beta}{\lambda}(\alpha b_{\beta}+\gamma a_{\beta})+\sqrt{\big(\sigma-\mu-\frac{\beta}{\lambda}(\gamma a_{\beta}-\alpha b_{\beta})\big)^2+4\frac{\beta^2}{\lambda^2}\alpha\gamma b_{\beta} a_{\beta}}\Big]
	\\
	&=\frac{1}{2}\Big[	-(\mu	+\sigma)+\frac{\beta}{\lambda}(\alpha b_{\beta}+\gamma a_{\beta})+\sqrt{f(\beta,\lambda)}\Big],
 	\end{aligned}
 \end{equation*}
 where
 \begin{equation*}
 	f(\beta,\lambda):=\big(\sigma-\mu-\frac{\beta}{\lambda}(\gamma a_{\beta}-\alpha b_{\beta})\big)^2+4\frac{\beta^2}{\lambda^2}\alpha\gamma b_{\beta} a_{\beta}.
 \end{equation*}
 Then 
 \begin{equation*}
 	\begin{aligned}
 		\frac{\partial}{\partial\lambda}\delta_{2}(\beta,\lambda)&=-\frac{\beta}{2\lambda^2}(\alpha b_{\beta}+\gamma a_{\beta})+\frac{1}{4\sqrt{f(\beta,\lambda)}}	\frac{\partial}{\partial\lambda}f(\beta,\lambda)
 		\\
 		&=-\frac{\beta}{2\lambda^2}(\alpha b_{\beta}+\gamma a_{\beta})+\frac{1}{4\sqrt{f(\beta,\lambda)}}	\Big[2\big(\sigma-\mu-\frac{\beta}{\lambda}(\gamma a_{\beta}-\alpha b_{\beta})\big)\frac{\beta}{\lambda^2}(\gamma a_{\beta}-\alpha b_{\beta})-8\frac{\beta^2}{\lambda^3}\alpha\gamma b_{\beta} a_{\beta}\Big]
 		\\
 		&=-\frac{\beta}{2\lambda^2}(\alpha b_{\beta}+\gamma a_{\beta})+\frac{1}{2\sqrt{f(\beta,\lambda)}}\frac{\beta}{\lambda^2}	\Big[\big(\sigma-\mu-\frac{\beta}{\lambda}(\gamma a_{\beta}-\alpha b_{\beta})\big)(\gamma a_{\beta}-\alpha b_{\beta})-4\frac{\beta}{\lambda}\alpha\gamma b_{\beta} a_{\beta}\Big]
	\\
		&=
		\frac{\beta}{2\lambda^2\sqrt{f(\beta,\lambda)}}\Big\{-(\alpha b_{\beta}+\gamma a_{\beta})\sqrt{f(\beta,\lambda)}+	\big(\sigma-\mu-\frac{\beta}{\lambda}(\gamma a_{\beta}-\alpha b_{\beta})\big)(\gamma a_{\beta}-\alpha b_{\beta})-4\frac{\beta}{\lambda}\alpha\gamma b_{\beta} a_{\beta}\Big\}.
 	\end{aligned}
 \end{equation*}
{If $\sigma-\mu-\frac{\beta}{\lambda}(\gamma a_{\beta}-\alpha b_{\beta})\le 0$, then $\frac{\partial}{\partial\lambda}\delta_{2,\lambda}(\beta)<0$. If $\sigma-\mu-\frac{\beta}{\lambda}(\gamma a_{\beta}-\alpha b_{\beta})> 0$, then}
  \begin{equation*}
 	\begin{aligned}
 		\frac{\partial}{\partial\lambda}\delta_{2}(\beta,\lambda)
 &\le \frac{\beta}{2\lambda^2\sqrt{f(\beta,\lambda)}}\Big\{-(\alpha b_{\beta}+\gamma a_{\beta})\big(\sigma-\mu-\frac{\beta}{\lambda}(\gamma a_{\beta}-\alpha b_{\beta})\big)+	\big(\sigma-\mu-\frac{\beta}{\lambda}(\gamma a_{\beta}-\alpha b_{\beta})\big)(\gamma a_{\beta}-\alpha b_{\beta})-4\frac{\beta}{\lambda}\alpha\gamma b_{\beta} a_{\beta}\Big\}
 \\
 &= \frac{\beta}{2\lambda^2\sqrt{f(\beta,\lambda)}}\Big\{-2\alpha b_{\beta}\big(\sigma-\mu-\frac{\beta}{\lambda}(\gamma a_{\beta}-\alpha b_{\beta})\big)-4\frac{\beta}{\lambda}\alpha\gamma b_{\beta} a_{\beta}\Big\}<0.
 	\end{aligned}
 \end{equation*}
 Hence 
 \begin{equation*}
 \frac{\partial}{\partial\lambda}\delta_{2}(\beta,\lambda)<0.
	\end{equation*}
	
	Recall that
	\begin{equation*}
	\begin{aligned}
			\frac{\mathrm{d}a_{\beta}}{\mathrm{d} \beta}&=-\frac{\sigma\alpha(\beta^2\alpha\gamma+\mu\sigma-2\beta\gamma\mu)}{(\beta^2\alpha\gamma-\mu\sigma)^2},
		\\
		\frac{\mathrm{d}b_{\beta}}{\mathrm{d} \beta}&=-\frac{\mu\gamma(\beta^2\alpha\gamma+\mu\sigma-2\beta\alpha\sigma)}{(\beta^2\alpha\gamma-\mu\sigma)^2}.
	\end{aligned}
\end{equation*}
On the other hand, 
  \begin{equation*}
 	\begin{aligned}	
	 \frac{\partial}{\partial\beta}\delta_{2}(\beta,\lambda)
	 =&
	 \frac{1}{2\lambda}(\alpha b_{\beta}+\gamma a_{\beta})+\frac{\beta}{2\lambda}\big(\alpha \frac{\mathrm{d}b_{\beta}}{\mathrm{d} \beta}+\gamma \frac{\mathrm{d}a_{\beta}}{\mathrm{d} \beta}\big)+\frac{1}{4\sqrt{f(\beta,\lambda)}}	\frac{\partial}{\partial\beta}f(\beta,\lambda)
	 \\
	 =&
	 \frac{1}{2\lambda}(\alpha b_{\beta}+\gamma a_{\beta})+\frac{\beta}{2\lambda}\big(\alpha \frac{\mathrm{d}b_{\beta}}{\mathrm{d} \beta}+\gamma \frac{\mathrm{d}a_{\beta}}{\mathrm{d} \beta}\big)
	 \\
	 &+\frac{1}{4\sqrt{f(\beta,\lambda)}}\Big\{	2\big(\sigma-\mu-\frac{\beta}{\lambda}(\gamma a_{\beta}-\alpha b_{\beta})\big)\Big[-\frac{1}{\lambda}(\gamma a_{\beta}-\alpha b_{\beta})-\frac{\beta}{\lambda}\big(\gamma \frac{\mathrm{d}a_{\beta}}{\mathrm{d} \beta}-\alpha \frac{\mathrm{d}b_{\beta}}{\mathrm{d} \beta}\big)\Big]
	 \\
	 &\qquad
	 +8\frac{\beta}{\lambda^{2}}\alpha\gamma b_{\beta} a_{\beta}+4\frac{\beta^{2}}{\lambda^2}\alpha\gamma \frac{\mathrm{d}b_{\beta}}{\mathrm{d} \beta}a_{\beta}+4\frac{\beta^{2}}{\lambda^2}\alpha\gamma b_{\beta} \frac{\mathrm{d}a_{\beta}}{\mathrm{d} \beta}\Big\}
	 \\
	  =&
	 \frac{1}{2\lambda}\Big[(\alpha b_{\beta}+\gamma a_{\beta})+\beta\big(\alpha \frac{\mathrm{d}b_{\beta}}{\mathrm{d} \beta}+\gamma \frac{\mathrm{d}a_{\beta}}{\mathrm{d} \beta}\big)\Big]
	 \\
	 &+\frac{1}{2\lambda\sqrt{f(\beta,\lambda)}}\Big\{	-\big(\sigma-\mu-\frac{\beta}{\lambda}(\gamma a_{\beta}-\alpha b_{\beta})\big)\Big[(\gamma a_{\beta}-\alpha b_{\beta})+\beta\big(\gamma \frac{\mathrm{d}a_{\beta}}{\mathrm{d} \beta}-\alpha \frac{\mathrm{d}b_{\beta}}{\mathrm{d} \beta}\big)\Big]
	 \\
	 &\qquad
	 +4\frac{\beta}{\lambda}\alpha\gamma b_{\beta} a_{\beta}+2\frac{\beta^{2}}{\lambda}\alpha\gamma \frac{\mathrm{d}b_{\beta}}{\mathrm{d} \beta}a_{\beta}+2\frac{\beta^{2}}{\lambda}\alpha\gamma b_{\beta} \frac{\mathrm{d}a_{\beta}}{\mathrm{d} \beta}\Big\}.
	\end{aligned}
 \end{equation*}
 Furthermore,
  \begin{equation*}
 	\begin{aligned}	
	&(\alpha b_{\beta}+\gamma a_{\beta})+\beta\big(\alpha \frac{\mathrm{d}b_{\beta}}{\mathrm{d} \beta}+\gamma \frac{\mathrm{d}a_{\beta}}{\mathrm{d} \beta}\big)
	\\
	=& \frac{\mu\alpha(\beta\gamma-\sigma)+\gamma\sigma(\beta\alpha-\mu)}{\beta^2\alpha\gamma-\mu\sigma}-\frac{\beta}{(\beta^2\alpha\gamma-\mu\sigma)^2}\big[\alpha\gamma\mu(\beta^{2}\alpha\gamma+\mu\sigma-2\beta\sigma\alpha)+\gamma\alpha\sigma(\beta^{2}\alpha\gamma+\mu\sigma-2\beta\gamma\mu)\big]
		\\
		=& \frac{\beta\alpha\gamma(\mu+\sigma)-\mu\sigma(\alpha-\gamma)}{\beta^2\alpha\gamma-\mu\sigma}-\frac{\beta\alpha\gamma}{(\beta^2\alpha\gamma-\mu\sigma)^2}\big[(\mu+\sigma)(\beta^{2}\alpha\gamma+\mu\sigma)-2\beta\sigma\mu(\alpha+\gamma\big]
	\\
	=&
	\frac{1}{(\beta^2\alpha\gamma-\mu\sigma)^2}\Big[\beta\alpha\gamma(\mu+\sigma)(\beta^2\alpha\gamma-\mu\sigma)-(\beta^2\alpha\gamma-\mu\sigma)\mu\sigma(\alpha-\gamma)-\beta\alpha\gamma(\mu+\sigma)\big[\beta^{2}\alpha\gamma+\mu\sigma\big]+2\beta^{2}\alpha\gamma\sigma\mu(\alpha+\gamma)\Big]
	\\
	=&\frac{\mu\sigma}{(\beta^2\alpha\gamma-\mu\sigma)^2}\Big[\beta^{2}\alpha\gamma(\alpha+3\gamma)-\beta\alpha\gamma(\mu+\sigma)+\mu\sigma(\alpha-\gamma)\Big]
	\\
	{\ge}&
	\frac{\mu\sigma}{(\beta^2\alpha\gamma-\mu\sigma)^2}\Big[\sigma^{2}\alpha\gamma^{-1}(\alpha+3\gamma)-\sigma\alpha(\mu+\sigma)+\mu\sigma(\alpha-\gamma)\Big]
		\\
	=&	\frac{\mu\sigma^{2}\gamma^{-1}}{(\beta^2\alpha\gamma-\mu\sigma)^2}\Big[\sigma\alpha(\alpha-\gamma)+\gamma(3\sigma\alpha-\mu\gamma)\Big]>0.
		\end{aligned}
 \end{equation*}
 And
   \begin{equation*}
 	\begin{aligned}	
 &-\big(\sigma-\mu-\frac{\beta}{\lambda}(\gamma a_{\beta}-\alpha b_{\beta})\big)\Big[(\gamma a_{\beta}-\alpha b_{\beta})+\beta\big(\gamma \frac{\mathrm{d}a_{\beta}}{\mathrm{d} \beta}-\alpha \frac{\mathrm{d}b_{\beta}}{\mathrm{d} \beta}\big)\Big]
	 +4\frac{\beta}{\lambda}\alpha\gamma b_{\beta} a_{\beta}+2\frac{\beta^{2}}{\lambda}\alpha\gamma \frac{\mathrm{d}b_{\beta}}{\mathrm{d} \beta}a_{\beta}+2\frac{\beta^{2}}{\lambda}\alpha\gamma b_{\beta} \frac{\mathrm{d}a_{\beta}}{\mathrm{d} \beta}
	 \\
	 =&-(\sigma-\mu)(\gamma a_{\beta}-\alpha b_{\beta})+\frac{\beta}{\lambda}(\gamma a_{\beta}-\alpha b_{\beta})^{2}-(\sigma-\mu)\beta(\gamma \frac{\mathrm{d}a_{\beta}}{\mathrm{d} \beta}-\alpha \frac{\mathrm{d}b_{\beta}}{\mathrm{d} \beta})+\frac{\beta^{2}}{\lambda}(\gamma a_{\beta}-\alpha b_{\beta})(\gamma \frac{\mathrm{d}a_{\beta}}{\mathrm{d} \beta}-\alpha \frac{\mathrm{d}b_{\beta}}{\mathrm{d} \beta})
	 \\
	 &
	 +4\frac{\beta}{\lambda}\alpha\gamma b_{\beta} a_{\beta}+2\frac{\beta^{2}}{\lambda}\alpha\gamma \frac{\mathrm{d}b_{\beta}}{\mathrm{d} \beta}a_{\beta}+2\frac{\beta^{2}}{\lambda}\alpha\gamma b_{\beta} \frac{\mathrm{d}a_{\beta}}{\mathrm{d} \beta}
	 \\
	  =&-(\sigma-\mu)(\gamma a_{\beta}-\alpha b_{\beta})+\frac{\beta}{\lambda}(\gamma a_{\beta}+\alpha b_{\beta})^{2}-(\sigma-\mu)\beta\big(\gamma \frac{\mathrm{d}a_{\beta}}{\mathrm{d} \beta}-\alpha \frac{\mathrm{d}b_{\beta}}{\mathrm{d} \beta}\big)+\frac{\beta^{2}}{\lambda}(\gamma a_{\beta}+\alpha b_{\beta})(\gamma \frac{\mathrm{d}a_{\beta}}{\mathrm{d} \beta}+\alpha \frac{\mathrm{d}b_{\beta}}{\mathrm{d} \beta})
	  \\
	   =&\frac{\beta}{\lambda}(\gamma a_{\beta}+\alpha b_{\beta})\Big[(\gamma a_{\beta}+\alpha b_{\beta})+\beta(\gamma \frac{\mathrm{d}a_{\beta}}{\mathrm{d} \beta}+\alpha \frac{\mathrm{d}b_{\beta}}{\mathrm{d} \beta})\Big]
	   -(\sigma-\mu)\Big[(\gamma a_{\beta}-\alpha b_{\beta})
	  +\beta\big(\gamma \frac{\mathrm{d}a_{\beta}}{\mathrm{d} \beta}-\alpha \frac{\mathrm{d}b_{\beta}}{\mathrm{d} \beta}\big)\Big].
\end{aligned}
 \end{equation*}	
 In addition,
  \begin{equation*}
 	\begin{aligned}	
	&(\gamma a_{\beta}-\alpha b_{\beta})
	  +\beta\big(\gamma \frac{\mathrm{d}a_{\beta}}{\mathrm{d} \beta}-\alpha \frac{\mathrm{d}b_{\beta}}{\mathrm{d} \beta}\big)
	  \\
	  =&
 \frac{\gamma\sigma(\beta\alpha-\mu)-\mu\alpha(\beta\gamma-\sigma)}{\beta^2\alpha\gamma-\mu\sigma}
 +\frac{\beta}{(\beta^2\alpha\gamma-\mu\sigma)^2}\big[-\gamma\alpha\sigma(\beta^{2}\alpha\gamma+\mu\sigma-2\beta\gamma\mu)
 +\alpha\gamma\mu(\beta^{2}\alpha\gamma+\mu\sigma-2\beta\sigma\alpha)
 \big]
 \\
   =&
 \frac{\beta\alpha\gamma(\sigma-\mu)+\sigma\mu(\alpha-\gamma)}{\beta^2\alpha\gamma-\mu\sigma}
 +\frac{\beta\alpha\gamma}{(\beta^2\alpha\gamma-\mu\sigma)^2}\big[-(\sigma-\mu)(\beta^{2}\alpha\gamma+\mu\sigma)-2\beta\mu\sigma(\alpha-\gamma)
 \big]
 \\
 =&\frac{1}{(\beta^2\alpha\gamma-\mu\sigma)^{2}}\Big[
 \beta\alpha\gamma(\sigma-\mu)(\beta^2\alpha\gamma-\mu\sigma)+(\beta^2\alpha\gamma-\mu\sigma)\sigma\mu(\alpha-\gamma)-\beta\alpha\gamma(\sigma-\mu)(\beta^{2}\alpha\gamma+\mu\sigma)-2\beta^{2}\alpha\gamma\mu\sigma(\alpha-\gamma)
 \Big]
 \\
  =&
 \frac{1}{(\beta^2\alpha\gamma-\mu\sigma)^{2}}\Big[
-2 \beta\alpha\gamma\mu\sigma(\sigma-\mu)-\sigma^{2}\mu^{2}(\alpha-\gamma)-\beta^{2}\alpha\gamma\mu\sigma(\alpha-\gamma)
 \Big]<0.
	\end{aligned}
 \end{equation*}
 Therefore
 \begin{equation*}
  \frac{\partial}{\partial\beta}\delta_{2}(\beta,\lambda)>0.
 \end{equation*}
 \end{proof}

	\end{document}